\def\acts{\curvearrowright}
\tikzset{anchorbase/.style={baseline={([yshift=-0.5ex]current bounding box.center)}}}
\tikzstyle directed=[postaction={decorate,decoration={markings,
    mark=at position #1 with {\arrow{>}}}}]
\tikzstyle rdirected=[postaction={decorate,decoration={markings,
    mark=at position #1 with {\arrow{<}}}}]
 \newlength{\baseunit}               
\newtheorem{theorem}[subsubsection]{Theorem}
\newtheorem{lemma}[theorem]{Lemma}
\newtheorem{prop}[theorem]{Proposition}
\newtheorem{corollary}[subsubsection]{Corollary}
\theoremstyle{definition}
\newtheorem{remark}[theorem]{Remark}
\newtheorem{question}[theorem]{Question}
\newtheorem{example}[subsubsection]{Example}
\newcommand{\im}{\mathrm{im}}
\newcommand{\tto}{\twoheadrightarrow}
\newcommand{\cO}{\mathcal{O}}
\newcommand{\cF}{\mathcal{F}}
\newcommand{\cC}{\mathcal{C}}
\newcommand{\cP}{\mathcal{P}}
\newcommand{\mN}{\mathbb{N}}
\newcommand{\mZ}{\mathbb{Z}}
\newcommand{\mR}{\mathbb{R}}
\newcommand{\mC}{\mathbb{C}}
\newcommand{\mL}{\mathbb{L}}
\newcommand{\fg}{\mathfrak{g}}
\newcommand{\ftg}{\tilde{\mathfrak{g}}}
\newcommand{\fk}{\mathfrak{k}}
\newcommand{\fa}{\mathfrak{a}}
\newcommand{\fs}{\mathfrak{s}}
\newcommand{\fn}{\mathfrak{n}}
\newcommand{\fb}{\mathfrak{b}}
\newcommand{\fh}{\mathfrak{h}}
\newcommand{\End}{\mathrm{End}}
\newcommand{\Hom}{\mathrm{Hom}}
\newcommand{\Res}{\mathrm{Res}}
\newcommand{\coker}{\mathrm{coker}}
\newcommand{\Ext}{\mathrm{Ext}}
\newcommand{\Ob}{\mathrm{Ob}}
\newcommand{\ba}{\mathbf{a}}
\newcommand{\bc}{\mathbf{c}}
\newcommand{\la}{\lambda}
\newcommand{\mf}{\mathfrak}
\newcommand{\vecc}{\mathsf{vec}}
\newcommand{\oa}{\bar{0}}
\newcommand{\ob}{\bar{1}}
\newcommand{\gmod}{\mbox{-gmod}}
\newcommand{\tL}{\mathsf{L}}
\newcommand{\hs}{{\fh^\fs}}
\begin{document}
\title[Semisimple super Takiff]{Representation theory of a semisimple extension of the Takiff superalgebra}
\author{Shun-Jen Cheng}
\address{Institute of Mathematics, Academia Sinica, Taipei, Taiwan 10617}
\email{chengsj@math.sinica.edu.tw}
\author{Kevin Coulembier}
\address{School of Mathematics and Statistics, University of Sydney, F07, NSW 2006, Australia}
\email{kevin.coulembier@sydney.edu.au}


\keywords{}
\date{\today}

\begin{abstract}
We study a semisimple extension of a Takiff superalgebra which turns out to have a remarkably rich representation theory. We determine the blocks in both the finite-dimensional and BGG module categories and also classify the Borel subalgebras. We further compute all extension groups between two finite-dimensional simple objects and prove that all non-principal blocks in the finite-dimensional module category are Koszul.
\end{abstract}

\maketitle


\section*{Introduction}

 Let $\fs$ be a finite-dimensional simple Lie algebra and let $\Pi V$ be its adjoint module regarded as an odd space. We can build in a rather simple fashion the corresponding Takiff superalgebra $\fs\rtimes \Pi V$ by declaring $\Pi V$ to be an abelian ideal. Letting $\mathfrak d$ denote the space of outer derivations of $\fs\rtimes \Pi V$ we can show that $\fg=\left(\fs\rtimes \Pi V\right)\rtimes \mathfrak d$ is a semisimple Lie superalgebra, which is the semisimple extension of the Takiff superalgebra from the title. In this paper we attempt to understand the representation theory of $\fg$. Our motivation, among others, comes from the structure theory of classical Lie superalgebras.

A finite-dimensional Lie superalgebra is called `classical' (or `quasi-reductive' in \cite{Serga}) if the adjoint action restricted to its even subalgebra is semisimple. In particular, the even subalgebra is a reductive Lie algebra, which is one of the main reasons classical Lie superalgebras have been studied extensively in the literature. While many simple Lie superalgebras are classical, not all of them are. Indeed, from the classification of simple Lie superalgebras in \cite{Kac77} it follows that all simple Lie superalgebras are classical except for those of Cartan types, i.e., the Lie superalgebras of vector fields on purely odd dimensional superspaces.

It is known, see \cite[Theorem B]{El} (or \cite[Theorem 2.6]{CCC}), that all classical Lie superalgebras can be obtained from the semisimple classical Lie superalgebra ones by applying two types of elementary extensions, namely even central extensions and extensions by an odd abelian vector space similar to the one described for the Takiff superalgebra above.

Thus, one is led naturally to the study of semisimple classical Lie superalgebras. In contrast to Lie algebras, a semisimple Lie superalgebra is in general not a direct sum of simple Lie superalgebras, see \cite[Theorem 6]{Kac77} and \cite[Proposition 7.2]{Ch95}, and the same is true for classical semisimple Lie superalgebras. However, the building blocks of classical semisimple Lie superalgebras are precisely classical simple Lie superalgebras and the semisimple Lie superalgebra of the form $\fg$ above.

While the representation theory of simple Lie superalgebras has been studied in great detail by mathematicians and physicists alike and substantial progress has been made by now towards a satisfactory and complete theory, the representation theory of the semisimple extension of the Takiff superalgebra $\fg$ has not been studied much. While it is easy to compute the finite-dimensional irreducible characters of $\fg$, see \cite[Section 10]{Ch95} for even more general types of semisimple Lie superalgebras, not much about the finite-dimensional module category $\cF$ seems to be known. In this paper, we first classify and determine the blocks in $\cF$ and also in the corresponding BGG category $\cO$. When rank$\,\fs\ge 2$ we show that the number of blocks in either category is finite and equals the determinant of the Cartan matrix, which coincides with the number of minuscule representations of $\fs$ and also with the order of the fundamental group of $\fs$ (the quotient of the integral weight lattice with the root lattice). In the case of $\fs=\mathfrak{sl}_2$ one has three blocks in either module category. As a consequence of our block decomposition of category $\cO$ we prove that the centre of the universal algebra of $\fg$ is trivial.

Another main result of this work is the computation of all extension groups between two simple objects in $\cF$. Remarkably, our computation of the first extension groups in $\cF$ reveals a perfect match with the first extensions groups in the category of so-called finite current conformal modules first calculated in \cite[Section 4]{CKW}. The notion of conformal modules was originally motivated by $2$-dimensional conformal field theory. Our results on the extension groups show that the non-principal blocks in $\cF$ are indeed Koszul. Interestingly, the category $\cF$ is already graded Koszul itself and is in fact the graded lift in a `super' sense of the category of modules of its derived algebra $\fg'$. We also investigate the tensor powers $\left(\mC^{n|n}\right)^{\otimes r}$ in the case $\fs\cong\mathfrak{sl}(n)$ and study the $\fg$-invariants in the corresponding endomorphism algebras in the spirit of the celebrated Schur-Weyl duality. We show that this algebra of $\fg$-invariants is isomorphic to the group algebra of the symmetric group in $r$ letters in the case $r<n$ in Theorem \ref{ThmIT}, but that for high $r$ the symmetric group does not produce all invariants.

We conclude this introduction with an outline of the paper. In Section \ref{SecPrel}, we set up notations and also recall some basic results about Lie superalgebras that will be used throughout the paper. In Section \ref{SecTakiff}, the main object of study, the semisimple extension of the Takiff superalgebra $\fg$, is constructed. We classify the Borel subalgebras of $\fg$ up to conjugation by inner automorphisms. In particular, we show in Theorem \ref{ThmBorel} that the number of such conjugacy classes equals twice the number of the so-called $\oplus$-sign types indexed by the positive roots of the corresponding simple Lie algebra $\fs$ that have been studied earlier in \cite{S} in the context of left cells in affine Weyl groups. They admit remarkable combinatorial interpretations, e.g., in type $A$ these numbers are precisely the Catalan numbers. In Section \ref{Secblocks}, we determine the blocks decompositions of $\cF$ and $\cO$ and study invariant theory. Finally, in Section \ref{SecExt}, the higher extension groups between simple objects in $\cF$ are computed, from which Koszulity of the non-principal blocks is derived.


\section{Preliminaries}\label{SecPrel}
We always work over the ground field $\mC$ of complex numbers and set $\mZ$ and $\mN$ to be the sets of all and nonnegative integers, respectively.

\subsection{Classical Lie superalgebras}\label{DefClass}
We refer to \cite{Kac77, CW, Mu} for more general background on Lie superalgebras.
Recall that a finite-dimensional Lie superalgebra $\fg=\fg_{\oa}\oplus\fg_{\ob}$  is `classical' if the adjoint action of $\fg_{\oa}$ on $\fg$ is semisimple (in particular $\fg_{\oa}$ is reductive).

\subsubsection{The module category}\label{DefF}
For a classical Lie superalgebra $\fg$, we denote by $\cF(\fg)$ the category of finite-dimensional (super) modules which are semisimple over $\fg_{\oa}$. We denote the functor which switches the $\mZ/2$-parity of a module by $\Pi$.

Since the functors $U(\fg)\otimes_{U(\fg_{\oa})}-$ (respectively $\Hom_{U(\fg_{\oa})}(U(\fg),-)$) send any semisimple finite-dimensional $\fg_{\oa}$-module to a projective (respectively injective) module in $\cF(\fg)$, it follows that $\cF(\fg)$ has enough injective and projective objects. Since $\cF(\fg)$ is a tensor category, or because $U(\fg)\otimes_{U(\fg_{\oa})}-$ and $\Hom_{U(\fg_{\oa})}(U(\fg),-)$ are isomorphic up to composition with an auto-equivalence, see \cite{BF, Go}, it follows that injective and projective objects coincide.

Unless stated otherwise all morphisms are homogeneous with respect to the $\mZ/2$-grading.

\subsubsection{Borel subalgebras}\label{DefBorel} We follow the definition in \cite{PS} of Borel subalgebras.
Consider a classical Lie superalgebra $\fg$ and fix a Cartan subalgebra $\fh$ of $\fg_{\oa}$. We denote the set of roots (non-zero weights which appear in the adjoint action of $\fh$ on $\fg$) by $\Phi$.
An element $H\in\fh$ is called {\em regular}, if $\mathsf{{Re}}\beta(H)\not=0$, for all $\beta\in\Phi$. Here $\mathsf{Re}z$ stands for the real part of a complex number $z$. A regular element defines a triangular decomposition
\begin{align*}
\fg=\fg_{\mathsf{Re}<0}\oplus\fg_{\mathsf{Re}=0}\oplus\fg_{\mathsf{Re}>0},
\end{align*}
where $\fg_{\mathsf{Re}<0}$ denotes the span of vectors $X\in\fg$ such that $[H,X]=\beta(H)X$ with $\mathsf{Re}\beta(X)<0$, and similarly for $\fg_{\mathsf{Re}=0}$ and $\fg_{\mathsf{Re}>0}$. 
We call the subalgebra $\fg_{\mathsf{Re}=0}\oplus \fg_{\mathsf{Re}>0}$ arising from such a regular element a {\em Borel subalgebra} of $\fg$. Replacing a regular element $H$ by $-H$, we see that $\fg_{\mathsf{Re}=0}\oplus \fg_{\mathsf{Re}<0}$ is also a Borel subalgebra.
Observe that the even subalgebra of a Borel subalgebra $\fb$ is a Borel subalgebra of the reductive Lie algebra $\fg_{\oa}$.

\subsection{Relative Lie superalgebra cohomology}
\subsubsection{}For a Lie superalgebra $\fk$ with subalgebra $\fa$, we have the standard resolution of the trivial $\fk$-module
\begin{equation}\label{KosRes}\cdots\to U(\fk)\otimes_{U(\fa)}\wedge^i(\fk/\fa)\to\cdots\to U(\fk)\otimes_{U(\fa)}(\fk/\fa)\to U(\fk)\otimes_{U(\fa)}\mC\to\mC\to 0,\end{equation}
see \cite{Fuks}.
We will only need the differential in a specific case, so we do not write it out in full generality. By definition, the relative Lie superalgebra cohomology of a $\fk$-module $M$ is given by
$$H^k(\fk,\fa;M)\;:=\; H^k(\Hom_{\fk}(U(\fk)\otimes_{U(\fa)}{\wedge}^\bullet\fk/\fa,M))\,\cong\, H^k(\Hom_{\fa}(\wedge^\bullet\fk/\fa,M)).$$
In particular, for $\fk$-modules $M$ and $N$ we have
\begin{equation}\label{eqRH}H^k(\fk,\fa;\Hom_{\mC}(M,N))\;\cong\; H^k(\Hom_{\fa}(\wedge^\bullet\fk/\fa\otimes M,N)).\end{equation}

\begin{lemma}\label{LemRHom}
For a classical Lie superalgebra $\fg$, $M,N\in\cF(\fg)$ and $i\in\mN$ we have
$$\Ext^i_{\cF(\fg)}(M,N)\;\cong\; H^i(\fg,\fg_{\oa};\Hom_{\mC}(M,N))\;\cong\;H^i(\Hom_{\fg_{\oa}}(S^\bullet\fg_{\ob}\otimes M,N)).$$
If the action of $\fg_{\ob}$ on $N$ is trivial, the coboundary homomorphisms are
$$d:\Hom_{\fg_{\oa}}(S^{n-1}\fg_{\ob}\otimes M,N)\to \Hom_{\fg_{\oa}}(S^{n}\fg_{\ob}\otimes M,N):$$
$$ (d\alpha)(X_1X_2\cdots X_n\otimes v)= \sum_{i=1}^n\alpha( X_1X_2\stackrel{\hat{i}}{\cdots} X_n\otimes X_i v).$$
\end{lemma}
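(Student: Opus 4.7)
The plan is to deduce the two isomorphisms from a standard projective resolution argument and then unpack the relative Chevalley--Eilenberg differential in our particular setting.

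First I would build a projective resolution of $M$ in $\cF(\fg)$. Taking the Koszul-type resolution \eqref{KosRes} for $\fk=\fg$ and $\fa=\fg_{\oa}$ and tensoring (over $\mC$) with $M$, one obtains an exact complex whose $i$-th term is
$$U(\fg)\otimes_{U(\fg_{\oa})}\bigl(\wedge^i(\fg/\fg_{\oa})\otimes M\bigr).$$
Since $\wedge^i(\fg/\fg_{\oa})\otimes M$ is a finite-dimensional semisimple $\fg_{\oa}$-module, each of these modules is projective in $\cF(\fg)$ by the observation in \S\ref{DefF}. Applying $\Hom_{\fg}(-,N)$ and using tensor-hom adjunction followed by the usual $\Hom$-tensor adjunction, the $i$-th term becomes
$$\Hom_{\fg_{\oa}}\bigl(\wedge^i(\fg/\fg_{\oa})\otimes M,N\bigr)\;\cong\;\Hom_{\fg_{\oa}}\bigl(\wedge^i(\fg/\fg_{\oa}),\Hom_{\mC}(M,N)\bigr),$$
and taking cohomology yields both the identification with $\Ext^i_{\cF(\fg)}(M,N)$ and with $H^i(\fg,\fg_{\oa};\Hom_{\mC}(M,N))$ by \eqref{eqRH}.

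Next, since $\fg/\fg_{\oa}\cong\fg_{\ob}$ is a purely odd $\fg_{\oa}$-module, the super-exterior algebra on $\fg/\fg_{\oa}$ coincides with the ordinary symmetric algebra $S^\bullet\fg_{\ob}$ (up to parity). This gives the second isomorphism
$$H^i(\fg,\fg_{\oa};\Hom_{\mC}(M,N))\;\cong\;H^i\bigl(\Hom_{\fg_{\oa}}(S^\bullet\fg_{\ob}\otimes M,N)\bigr).$$

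Finally, to obtain the explicit formula for $d$, I would unpack the standard super Chevalley--Eilenberg differential on $\Hom_{\fa}(\wedge^\bullet(\fk/\fa),\Hom_{\mC}(M,N))$, which splits into an \emph{action} part $\sum_i\pm X_i\cdot\omega(\ldots\widehat{X}_i\ldots)$ and a \emph{bracket} part $\sum_{i<j}\pm\omega([X_i,X_j],\ldots\widehat{X}_i\widehat{X}_j\ldots)$. The crucial simplification in our setting is that $[X_i,X_j]\in[\fg_{\ob},\fg_{\ob}]\subset\fg_{\oa}$, so its image in $\fg/\fg_{\oa}$ vanishes and the bracket part drops out entirely. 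Transporting the remaining action part across the adjunction, the action of $X_i\in\fg_{\ob}$ on $\Hom_{\mC}(M,N)$ decomposes as $\rho_N(X_i)\circ-\;-\;(-1)^{|-|}\,-\circ\rho_M(X_i)$; the hypothesis $\fg_{\ob}\cdot N=0$ kills the first summand and leaves precisely the claimed formula (the alternating signs present in the general formula for $d$ become trivial once one rewrites the cochains as maps from $S^\bullet\fg_{\ob}$ rather than $\wedge^\bullet\fg_{\ob}$, since all $X_i$ are odd).

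The main bookkeeping hurdle is the sign verification in the super Chevalley--Eilenberg formula and in the adjunctions between $\wedge$ and $S$, but the conceptual content is entirely contained in the two observations that $[\fg_{\ob},\fg_{\ob}]\subset\fg_{\oa}$ kills the bracket term and that trivial $\fg_{\ob}$-action on $N$ kills one of the two contributions from the action on $\Hom_{\mC}(M,N)$.
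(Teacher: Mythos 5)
Your proposal is correct and follows essentially the same route as the paper's (very terse) proof: use the Koszul-type resolution \eqref{KosRes}, observe that it stays projective in $\cF(\fg)$, pass through the chain of adjunctions/identifications already set up in \eqref{eqRH}, and identify $\wedge^n(\fg/\fg_{\oa})$ with $S^n\fg_{\ob}$ since $\fg/\fg_{\oa}$ is purely odd. The only minor variation is that you construct a projective resolution of $M$ directly by tensoring \eqref{KosRes} with $M$ over $\mC$ (using the tensor identity to see that the terms are of the induced form, hence projective), whereas the paper's phrasing suggests going through $\Ext^i(\mC,\Hom_\mC(M,N))$ via rigidity; both are standard and land in the same place. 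Your unpacking of the Chevalley--Eilenberg differential (bracket term dies because $[\fg_{\ob},\fg_{\ob}]\subseteq\fg_{\oa}$, and trivial $\fg_{\ob}$-action on $N$ kills the $\rho_N$-part) is exactly the content needed for the last assertion, and correctly fills in what the paper leaves implicit.
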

\begin{proof}
It suffices to observe that \eqref{KosRes} is a projective resolution of $\mC$ in $\cF(\fg)$ and that we can identify the space $\wedge^n(\fg/\fg_{\oa})$ with $S^n\fg_{\ob}$.
\end{proof}


\subsection{Classical Lie superalgebras of type I}
In this section, we completely ignore $\mZ/2$-parity, so every statement and module only makes sense modulo $\Pi$.

\subsubsection{} A classical Lie superalgebra $\fg$ is of type I if it admits a three-term $\mZ$-grading
$$\fg=\fg_{-1}\oplus \fg_0\oplus \fg_1,\quad\mbox{with}\quad\fg_{\oa}=\fg_{0}\mbox{ and }\fg_{\ob}=\fg_{-1}\oplus\fg_1.$$
We will set $\fg_{\le0}=\fg_0\oplus\fg_{-1}$ and $\fg_{\ge0}=\fg_0\oplus\fg_{1}$.
It is then well-known that $\cF(\fg)$ is an essentially finite highest weight category, see \cite{BS}. In particular, this is the case because it is a parabolic category $\cO$ for parabolic subalgebra $\fg_{\le0}$ or $\fg_{\ge 0}$, see, e.g., \cite[\S 3]{CCC}.

We choose a Cartan subalgebra $\fh$ and a Borel subalgebra $\fb_{\oa}$ of $\fg_{\oa}$ with $\fh\subset \fb_{\oa}$. For $\lambda\in\fh^\ast$ we have the simple highest weight module $L^0(\lambda)$ of $\fg_0$.
If  $L^0(\lambda)$ is finite dimensional, we have the (co)standard modules (with respect to parabolic subalgebra $\fg_{\ge0}$)
\begin{align*}
&\nabla(\la):={\rm coind}_{\fg_{\le0}}^\fg L^0(\la),\\
&\Delta(\la):={\rm ind}_{\fg_{\ge0}}^\fg L^0(\la),
\end{align*}
in $\cF(\fg)$. Denote by $L(\lambda)$ the top of $\Delta(\la)$. This is the simple highest weight module, with respect to the Borel subalgebra $\fb=\fb_{\oa}\oplus\fg_1$, with highest weight $\lambda$. It follows that the simple modules in $\cF(\fg)$ are labelled by the same set of weights as those in $\cF(\fg_{\oa})$.

Let $P(\la)$ denote the projective cover of $L(\la)$ in $\cF(\fg)$.  Then it has a filtration of submodules such that each quotient is isomorphic to a standard module and we have the BGG reciprocity:
\begin{equation}\label{BGGrec}
(P(\nu):\Delta(\kappa))\;=\;[\nabla(\kappa):L(\nu)].
\end{equation}

\subsubsection{}
We will also briefly consider the BGG category $\cO$ with respect to the Borel subalgebra $\fb=\fb_{\oa}\oplus\fg_1$ of $\fg$ above. For $\nu\in\fh^*$, let $\Delta^0(\nu)$ and $\nabla^0(\nu)$ denote the $\fg_0$-Verma and $\fg_0$-dual Verma modules, respectively. We define the $\fg$-Verma and $\fg$-dual Verma modules to be respectively:
\begin{align*}
&\nabla'(\nu):={\rm coind}_{\fg_{\le 0}}^\fg \nabla^0(\nu),\\
&\Delta'(\nu):={\rm ind}_{\fg_{\ge 0}}^\fg \Delta^0(\nu).
\end{align*}
Denote the projective cover of $L(\nu)$ in $\cO$ by $P'(\nu)$, which again has a $\Delta'$-filtration. We have for $\nu,\kappa\in\fh^*$ the BGG reciprocity in $\cO$:
\begin{align}\label{BGG:O}
(P'(\nu):\Delta'(\kappa))\;=\;[\nabla'(\kappa):L(\nu)].
\end{align}


\section{A semisimple extension of the Takiff superalgebra}\label{SecTakiff}

We fix a finite-dimensional simple Lie algebra $\fs$ with a fixed Cartan subalgebra $\hs$.

\subsection{Definitions}
We introduce the classical Lie superalgebra $\fg$ which we will study in the remainder of the paper.

\subsubsection{} Let $\wedge(\xi)$ be the associative Grassmann superalgebra in the odd indeterminate $\xi$ and let $\mf d$ denote the Lie superalgebra of derivations of $\wedge(\xi)$. Then $\mf d=\mC\partial_\xi+\mC\xi\partial_\xi$, where $\partial_\xi$ is the derivation of $\wedge(\xi)$ uniquely determined by $\partial_\xi(\xi)=1$.

We can form the Takiff superalgebra $\mf s\otimes \wedge(\xi)$.  Note that $\mf d$ acts naturally on the Lie superalgebra $\mf s\otimes\wedge(\xi)$, so that we may form the extension
\begin{equation}\label{Defg}
\mf g:=\left(\mf s\otimes\wedge(\xi)\right)\rtimes \mf d.
\end{equation}
The radical (maximal solvable ideal) of $\fg$ is trivial, so by definition $\mf g$ is a semisimple Lie superalgebra.  This is the Lie superalgebra $\tilde{\mathfrak{s}}^d$ in the notation of \cite[Example~5.1]{Serga}.

 In the special case $\mathfrak{s}=\mathfrak{sl}(2)$, we obtain $\fg\cong\mathfrak{pe}(2)$, the periplectic Lie superalgebra. Our results will therefore also recover a special case of the representation theory of that superalgebra, as studied for instance in~\cite{B+9, CC}.


\subsubsection{}
The adjoint action of $-\xi\partial_\xi$ provides a $\mZ$-grading $\mf g=\mf g_{-1}\oplus\mf g_0\oplus\mf g_{+1}$ with
\begin{align*}
\fg_{-1}=\mf s\otimes\xi,\quad\fg_0=\mf s\oplus\mC\xi\partial_\xi,\quad \fg_{+1}=\mC\partial_\xi.
\end{align*}
The Lie superalgebra $\fg$ is clearly classical of type I.

We will always consider the Cartan subalgebra $\fh:=\hs\oplus\mC\xi\partial_\xi$ of $\fg$.
We define the element $\delta\in\fh^\ast$ determined by $\delta(H)=0$ for $H\in\hs$ and $\delta(\xi\partial_\xi)=-1$.

Denote the roots of $\fs$ with respect to $\hs$ by $\Phi(\fs)$. Then, the roots of $\fg$ with respect to $\fh$ are as follows:
\begin{equation}\label{EqRoots}
\Phi=\Phi_{\oa}\sqcup\Phi_{\ob},\qquad\Phi_{\oa}=\{\alpha|\alpha\in\Phi(\fs)\},\quad \Phi_{\ob}=\{\alpha-\delta, \pm\delta|\alpha\in\Phi(\fs)\}.
\end{equation}

In the next subsection we will classify the Borel subalgebras, as defined in Section \ref{DefBorel}, of $\fg$. First we review the two canonical `extremal' cases.

\begin{example}\label{ExBorel}
Let $h\in\hs$ be a regular element giving rise to a Borel subalgebra $\fb^{\fs}$ of $\fs$. Choose $\la\in\mR$ with $|\mathsf{Re}\alpha(h)|<\la$, for all $\alpha\in\Phi(\fs)$. Then the element $H_+:=h+\la\xi\partial_\xi$ is regular, and the Borel subalgebra of $\fg$ associated with $H$ is $\fb^{\fs}+\mC\xi\partial_\xi+\fs\otimes\xi$, which gives a Borel subalgebra with maximal possible dimension.
On the other hand, the element $H_-:=h-\la\xi\partial_\xi$ is also regular and the associated Borel subalgebra of $\fg$ is $\fb^{\fs}+\mC\xi\partial_\xi+\mC\partial_\xi$, which gives a Borel subalgebra with minimal possible dimension.
\end{example}

\subsection{Borel subalgebras}
Our goal is to describe the Borel subalgebras of $\fg$ up to conjugacy  by automorphisms induced by inner automorphisms of $\fg_{\oa}$.

\subsubsection{}Let $\fb$ and $\fb'$ be two Borel subalgebras of $\fg$. Since their even subalgebras are Borel subalgebras of the even part, $\fb_{\bar 0}$ and $\fb'_{\bar 0}$ are conjugate by an { inner} automorphism of the reductive Lie algebra $\fg_0$. Such an automorphism induces an automorphism on $\fg$. Note that an inner automorphism of $\fg_{\oa}$ that preserves a Borel subalgebra also preserves the root spaces. Hence, in order to determine the Borel subalgebras of $\fg$ up to conjugacy by inner automorphisms, it suffices to determine the Borel subalgebras of $\fg$ such that $\fb_{\bar 0}=\fb^\fs\oplus\mC\xi\partial_\xi$ is a fixed Borel subalgebra of $\fg_0$. Here, $\fb^\fs$ is a fixed Borel subalgebra of $\fs$.

We thus fix a Borel subalgebra $\fh^{\fs}\subset\fb^{\fs}\subset \fs$ for the rest of the section. We also let $\mf{n}^\fs$ be the corresponding nilradical of $\fb^\fs$ and also let $\mf{n}^\fs_-$ denote the opposite nilradical so that $\fs=\fb^\fs\oplus\mf{n}^\fs_-$.

With this in place we can state our main result.

\begin{theorem}\label{ThmBorel}
\begin{enumerate} [label=(\roman*)]
\item
 Let $W$ be the Weyl group, $\mathsf{h}$ the Coxeter number, and $e_1,\ldots,e_r$ be the exponents of $\fs$, where $r=\mathrm{rank}\,\fs$. Then the number of conjugacy classes of Borel subalgebras of $\fg$ equals $$\frac{2}{|W|}\prod_{i=1}^r(\mathsf{h}+e_i+1).$$
Explicitly, these numbers are as follows:
\begin{center}
\begin{tabular}{  |p{2cm}|p{6cm}|} 
 \hline
 $\fs$ & conjugacy classes\\
 \hline\hline
$\mf{sl}(n)$  \vspace{1mm}& $\frac{2}{n+1}{{2n}\choose{n}}$ \vspace{1mm}\\
\hline
$\mf{so}(2n+1)$  \vspace{1mm}&$2{{2n}\choose{n}}$ \vspace{1mm} \\\hline
 $\mf{sp}(2n)$  \vspace{1mm}& $2{{2n}\choose{n}}$ \vspace{1mm} \\\hline
 $\mf{so}(2n)$ \vspace{1mm}&  $2\left({{2n}\choose{n}}-{{2n-2}\choose{n-1}}\right)$ \vspace{1mm}\\\hline
 $E_6$& $2^7\cdot 13$ \\\hline
$E_7$& $2^7\cdot 5\cdot 13$ \\\hline
$E_8$& $2^7\cdot 5\cdot 11\cdot 19$ \\\hline
$F_4$& $2\cdot 3\cdot 5\cdot 7$ \\\hline
 $G_2$& $2^4$ \\\hline
\end{tabular}

\end{center}
\item We have the following possibilities for $\fb_{\ob}$, the odd part of a Borel subalgebras $\fb$ with even subalgebra $\fb_{\bar 0}=\fb^\fs\oplus\mC\xi\partial_\xi$:
\begin{itemize}
\item[(a)]
$\fb_{\bar 1}=N\otimes\xi$, where $N$ is a $\fb^{\fs}$-ideal in $\fs$ such that $\fb^\fs\subseteq N$.
\item[(b)]  $\fb_{\bar 1}=N'\otimes\xi+\mC\partial_\xi$, where $N'$ is a $\fb^{\fs}$-ideal such that $N'\subseteq\mf{n}^\fs$.
\end{itemize}
\end{enumerate}
\end{theorem}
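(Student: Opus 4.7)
The plan is to parametrize Borel subalgebras by the regular elements $H=h+\mu\xi\partial_\xi\in\fh$ that define them. Inspecting the root data in \eqref{EqRoots}, $H$ is regular iff $h$ is regular in $\fs$ and $\mathsf{Re}\,\mu\neq 0$. Using inner automorphisms of $\fs$ I normalize $h$ into the open dominant chamber of $\fb^\fs$, so that $\fb_{\bar 0}=\fb^\fs\oplus\mC\xi\partial_\xi$ becomes fixed; two Borel subalgebras of $\fg$ sharing this $\fb_{\bar 0}$ are conjugate under $\mathrm{Inn}(\fg_{\bar 0})$ only when they coincide, as each is preserved by its own even Borel. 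It therefore suffices to enumerate the possible $\fb_{\bar 1}$.

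For part (ii) I evaluate the odd roots of \eqref{EqRoots} on $H$: $\delta(H)=-\mu$ (from $\partial_\xi$), the space $\hs\otimes\xi$ has weight $-\delta$ with value $\mu$, and each $X_\alpha\otimes\xi$ has value $\alpha(h)+\mu$ for $\alpha\in\Phi(\fs)$. Splitting on the sign of $\mathsf{Re}\,\mu$ yields the two families in (ii): when $\mathsf{Re}\,\mu>0$ one has $\partial_\xi\notin\fb$ and $\hs\otimes\xi\subset\fb$, so $\fb_{\bar 1}=N\otimes\xi$ for a subspace $N\supseteq\fb^\fs$ of $\fs$; when $\mathsf{Re}\,\mu<0$ one obtains $\fb_{\bar 1}=\mC\partial_\xi\oplus N'\otimes\xi$ for some $N'\subseteq\fn^\fs$. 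In either case the closure $[\fb_{\bar 0},\fb_{\bar 1}]\subseteq\fb_{\bar 1}$ translates verbatim into $N$ (respectively $N'$) being a $\fb^\fs$-ideal.

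For the converse I encode $N$ by $T^-:=\{\beta\in\Phi^+:\fs_{-\beta}\subseteq N\}$; the $\fb^\fs$-ideal condition is equivalent to $T^-$ being a lower order ideal of the root poset $(\Phi^+,\le)$. Given such a $T^-$, I must exhibit $h$ in the open dominant chamber and $\mu>0$ satisfying $T^-=\{\beta\in\Phi^+:\beta(h)<\mu\}$. Up to rescaling this is precisely the assertion that every order ideal of $\Phi^+$ is realized as $\{\beta:\beta(h)<1\}$ for some $h$ in the open dominant chamber, i.e.\ Shi's identification \cite{S} of the dominant regions of the Catalan arrangement with $\oplus$-sign types. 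The case $\mathsf{Re}\,\mu<0$ is dual, parametrized by upper order ideals of $\Phi^+$, and is obtained by complementation. This appeal to Shi's bijection is the only genuinely non-routine ingredient of the proof; everything else is a direct computation with the root data of \eqref{EqRoots}.

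For part (i) it remains to count: by \cite{S} the number of $\oplus$-sign types of $\fs$ equals $\frac{1}{|W|}\prod_{i=1}^r(\mathsf h+e_i+1)$, and since the two families in (ii) contribute disjointly we obtain the formula $\frac{2}{|W|}\prod_{i=1}^r(\mathsf h+e_i+1)$. The tabulated values follow by substituting the standard Coxeter numbers and exponents; for instance type $A_{n-1}$ yields $\frac{2}{n!}\prod_{i=1}^{n-1}(n+i+1)=\frac{2(2n)!}{n!(n+1)!}=\frac{2}{n+1}\binom{2n}{n}$, type $G_2$ gives $\frac{2}{12}\cdot 8\cdot 12=16=2^4$, and the remaining rows are analogous one-line substitutions.
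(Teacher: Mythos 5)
Your proposal is correct and follows essentially the same route as the paper: classify $\fb_{\bar 1}$ by evaluating the odd roots in \eqref{EqRoots} on a regular element (as in Lemma~\ref{lem:borel:odd}), invoke Shi's Theorem~1.4 to realize every admissible $\fb^\fs$-ideal by a dominant $h$, and then count. The only cosmetic differences are that the paper passes between the two families via the Killing-form duality of Lemma~\ref{lem:bijection} rather than complementation of order ideals, and attributes the uniform product formula $\frac{1}{|W|}\prod_i(\mathsf h+e_i+1)$ to Cellini--Papi \cite{CP} rather than to Shi \cite{S}, who only tabulated the counts case by case.
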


We start the proof with the following lemma.
\begin{lemma}\label{lem:borel:odd}
Every Borel subalgebras $\fb$ with even subalgebra $\fb_{\bar 0}=\fb^\fs\oplus\mC\xi\partial_\xi$ is of one of the two types described in Theorem~\ref{ThmBorel}(ii).

 \end{lemma}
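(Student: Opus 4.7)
The plan is to start from a regular element producing the Borel $\fb$ and directly read off $\fb_{\ob}$ using the list $\Phi_{\ob}=\{\alpha-\delta,\pm\delta\mid \alpha\in\Phi(\fs)\}$ from \eqref{EqRoots}. Write any regular element as $H=h+t\,\xi\partial_\xi$ with $h\in\hs$ and $t\in\mC$. The hypothesis $\fb_{\bar 0}=\fb^{\fs}\oplus\mC\xi\partial_\xi$ forces $h$ to be a regular element of $\hs$ with $\mathsf{Re}\,\alpha(h)>0$ for every $\alpha\in\Phi^+(\fs)$, and the regularity of $H$ on the odd roots $\pm\delta$ forces $\tau:=\mathsf{Re}\,t\neq 0$, while regularity on $\alpha-\delta$ gives $\mathsf{Re}\,\alpha(h)+\tau\neq 0$ for all $\alpha\in\Phi(\fs)$.

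I would then split into two cases according to the sign of $\tau$. If $\tau>0$: evaluating $\beta(H)$ on each odd root $\beta$ shows that $\delta$ is negative (so $\partial_\xi\notin\fb$), $-\delta$ is positive (so $\hs\otimes\xi\subset\fb$), and $\alpha-\delta$ is positive iff $\mathsf{Re}\,\alpha(h)>-\tau$. In particular every positive root of $\fs$ contributes, so $\fb_{\ob}=N\otimes\xi$ with
$$N\;=\;\hs\oplus\mf n^\fs\oplus\bigoplus_{\alpha\in\Phi^-(\fs),\;\mathsf{Re}\,\alpha(h)>-\tau}\fs_\alpha\;\supseteq\;\fb^\fs,$$
matching the shape in (a). If $\tau<0$ the roles are reversed: $\partial_\xi\in\fb$, $\hs\otimes\xi\cap\fb=0$, and $\alpha-\delta$ is positive only for $\alpha\in\Phi^+(\fs)$ with $\mathsf{Re}\,\alpha(h)>-\tau$, giving $\fb_{\ob}=N'\otimes\xi+\mC\partial_\xi$ with $N'\subseteq\mf n^\fs$, matching (b).

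The remaining task is to verify that the $N$ (resp.\ $N'$) produced is a $\fb^\fs$-ideal. This is a short check: for $\alpha\in\Phi^+(\fs)$ and a root $\beta$ with $\fs_\beta\subset N$ such that $\alpha+\beta$ is a root, one has
$$\mathsf{Re}(\alpha+\beta)(h)\;=\;\mathsf{Re}\,\alpha(h)+\mathsf{Re}\,\beta(h)\;>\;0+(-\tau)\;=\;-\tau,$$
so $\fs_{\alpha+\beta}\subset N$; the bracket of $\mf n^\fs$ with $\hs$ lies in $\mf n^\fs\subseteq N$, handling the Cartan part. The argument for $N'\subseteq\mf n^\fs$ is identical (and slightly easier since all roots concerned are positive in $\fs$). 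I expect the only real obstacle to be bookkeeping sign conventions between $t$, $\delta$, and positivity of the roots $\alpha-\delta$ in the two cases; once those are set the proof reduces to direct evaluation and a one-line use of additivity of real parts.
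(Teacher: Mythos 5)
Your proof is correct and takes essentially the same route as the paper: write $H = h + t\,\xi\partial_\xi$, note $h$ must be regular for $\fs$ with $\mathsf{Re}\,\alpha(h)>0$ on $\Phi^+(\fs)$, and split on the sign of $\mathsf{Re}\,t$ to decide whether $\partial_\xi$ and $\hs\otimes\xi$ lie in $\fb_{\ob}$. The only (harmless) extra work is your explicit verification that $N$ and $N'$ are $\fb^\fs$-stable by adding real parts; the paper skips this because $\fb$ being a Lie subalgebra already forces $[\fb_{\oa},\fb_{\ob}]\subseteq\fb_{\ob}$, hence $\fb^\fs$-invariance of $N$ (resp.\ $N'$) for free.
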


\begin{proof}
Suppose that $H=h+\la\xi\partial_\xi$ is a regular element giving rise to the Borel subalgebra $\fb$. Then for any $\alpha\in\Phi(\fs)$ we have $\mathsf{Re}\alpha(h)=\mathsf{Re}\alpha(H)\not=0$, hence $h\in\hs$ is a regular element of $\hs\subseteq\fs$. Also, $\mathsf{Re}\delta(H)=-\mathsf{Re}\la\not=0$.

We have $\fb_{\bar 1}\subseteq\fg_{\bar 1}=\left(\fs\otimes\xi\right)\oplus\mC\partial_\xi$. It is clear that $\mC\partial_\xi\subseteq\fb_{\bar 1}$ if and only if $\mathsf{Re}\la<0$.

Now, suppose that $\mathsf{Re}\la>0$. In this case, we have $\fb_{\bar 1}=N\otimes\xi$, for some $\fb^\fs$-ideal $N\subseteq\fs$. Since $[H,x\otimes\xi]=\la (x\otimes\xi)$, for $x\in\hs$, it follows $\hs\subseteq N$ in this case. Since $N$ is $\fb^\fs$-invariant, it follows that $\fb^\fs\subseteq N$.

Now, if $\mathsf{Re}\la<0$, then we have $\fb_{\bar 1}=N'\otimes\xi+\mC\partial_\xi$, for some $\fb^\fs$-ideal $N'\subseteq\fs$. Since $[H,x\otimes\xi]=\la (x\otimes\xi)$, for $x\in\hs$, it follows $\hs\cap N'=0$ and thus, $N'\subseteq\mf{n}^\fs$.
\end{proof}

\begin{lemma}\label{lem:bijection}
There is a bijection between $\fb^\fs$-ideals in $\fs$ containing $\fb^\fs$ and $\fb^\fs$-ideals contained in $\mf{n}^\fs$.
\end{lemma}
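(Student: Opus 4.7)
The plan is to reduce the statement to a straightforward combinatorial bijection between subsets of $\Phi^+:=\Phi^+(\fs)$. Since $\hs\subseteq\fb^\fs$, every $\fb^\fs$-ideal $N\subseteq\fs$ is automatically $\hs$-stable and therefore decomposes as a direct sum of $\hs$-weight spaces. In particular, a $\fb^\fs$-ideal contained in $\mf n^\fs$ has the form $N'=\bigoplus_{\alpha\in I}\fs_\alpha$ for some $I\subseteq\Phi^+$, and a $\fb^\fs$-ideal containing $\fb^\fs$ has the form $N=\fb^\fs\oplus\bigoplus_{\alpha\in J}\fs_{-\alpha}$ for some $J\subseteq\Phi^+$ (using the decomposition $\fs=\fb^\fs\oplus\mf n^\fs_-$).

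Next, I would unpack the condition $[\fb^\fs,N]\subseteq N$ by bracketing with elements of $\hs$ (which acts diagonally, imposing nothing) and with root vectors in $\fs_\beta$ for $\beta\in\Phi^+$. For an ideal $N'\subseteq\mf n^\fs$ this reduces to: whenever $\alpha\in I$, $\beta\in\Phi^+$ and $\alpha+\beta\in\Phi$, one has $\alpha+\beta\in I$. For an ideal $N\supseteq\fb^\fs$, after noting that $[\fs_\beta,\fs_{-\alpha}]$ lies in $\fb^\fs$ automatically whenever $\beta=\alpha$ or $\beta-\alpha\in\Phi^+\cup\{0\}$, the condition reduces to: whenever $\alpha\in J$, $\beta\in\Phi^+$ and $\alpha-\beta\in\Phi^+$, one has $\alpha-\beta\in J$.

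Equipping $\Phi^+$ with its standard partial order ($\gamma\le\gamma'$ iff $\gamma'-\gamma$ is a sum of positive roots), a trivial induction on chain length shows that these one-step conditions are equivalent to $I$ being an upper set and $J$ a lower set in $(\Phi^+,\le)$. Since complementation is a bijection between upper sets and lower sets of any finite poset, the assignment
\[
N=\fb^\fs\oplus\bigoplus_{\alpha\in J}\fs_{-\alpha}\;\longmapsto\;N'=\bigoplus_{\alpha\in\Phi^+\setminus J}\fs_\alpha
\]
is the desired bijection. There is no essential obstacle here; the only mild care required is the correct bookkeeping of when bracketing a positive root vector with a negative root vector lands in $\fb^\fs$ versus in the ``new'' negative part of $N$, which is a routine root-system calculation.
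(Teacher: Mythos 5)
Your proof is correct but takes a genuinely different route from the paper. The paper argues abstractly: $\fb^\fs$-ideals containing $\fb^\fs$ correspond canonically to $\fb^\fs$-submodules of $\fs/\fb^\fs$; the Killing form gives a $\fb^\fs$-module isomorphism $(\fs/\fb^\fs)^\ast\cong\fn^\fs$; and the bijection sends a submodule $K\subseteq\fs/\fb^\fs$ to the kernel of $\fn^\fs\twoheadrightarrow K^\ast$. Your weight-space decomposition makes that same bijection explicit as complementation of subsets of $\Phi^+$, which is more concrete and makes the combinatorics transparent (useful in light of Theorem~\ref{thm:shi}), at the cost of a page of root-system bookkeeping that the duality argument avoids. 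One small caution: the detour through the standard partial order on $\Phi^+$ is unnecessary and not quite as ``trivial'' as advertised --- identifying the one-step closure conditions with upper/lower sets for $\gamma\le\gamma'\Leftrightarrow\gamma'-\gamma\in\mN\Phi^+$ requires the nontrivial fact that any two comparable positive roots are joined by a chain in $\Phi^+$ with successive differences in $\Phi^+$. Fortunately you don't need it: if $I$ is closed under adding a positive root (within $\Phi^+$), then for $\alpha\in\Phi^+\setminus I$, $\beta\in\Phi^+$ with $\alpha-\beta\in\Phi^+$, having $\alpha-\beta\in I$ would force $\alpha=(\alpha-\beta)+\beta\in I$, a contradiction; so $\Phi^+\setminus I$ is closed under subtracting a positive root, and conversely. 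Dropping the poset digression makes the argument airtight.
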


\begin{proof}
We have a canonical bijection between $\fb^\fs$-ideals containing $\fb^\fs$ and $\fb^{\fs}$-submodules in~$\fs/\fb^{\fs}$. Via the Killing form, we get an isomorphism $(\fs/\fb^{\fs})^\ast\cong \fn^{\fs}$ of $\fb^{\fs}$-modules. We thus get a canonical bijection between $\fb^{\fs}$-submodules in~$\fs/\fb^{\fs}$ and $\fb^{\fs}$-submodules in $\fn^{\fs}$, by sending a submodule $K\subset \fs/\fb^{\fs}$ to the kernel of $\fn^{\fs}\tto K^\ast$.
\end{proof}




The $\fb^\fs$-ideals inside the nilradical $\mf{n}^\fs$ were studied earlier by Jian-Yi Shi who has obtained the following characterization.

\begin{theorem}\cite[Theorem 1.4]{S}\label{thm:shi}
Suppose that $N'\subseteq\mf{n}^\fs$ is a $\fb^\fs$-ideal. Then there exists an element $h\in\hs$, such that $\mathsf{Re}\alpha(h)>0$, for any positive root $\alpha$ of $\fs$ and furthermore:
\begin{itemize}
\item[(i)] If $\fs_{\alpha}\subseteq N'$, then $\mathsf{Re}\alpha(h)>1$.
\item[(ii)] If $\fs_{\alpha}\not\subseteq N'$, then $0<\mathsf{Re}\alpha(h)<1$.
\end{itemize}
\end{theorem}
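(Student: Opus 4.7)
My plan is to recast the existence of $h$ as a feasibility problem for a system of strict linear inequalities on the real Cartan subspace, and to resolve that system using the combinatorial filter structure forced on $N'$ by the $\fb^\fs$-ideal hypothesis. Set $\Psi := \{\alpha \in \Phi^+(\fs) : \fs_\alpha \subseteq N'\}$. Since the theorem constrains only $\mathsf{Re}\alpha(h)$, it suffices to find $h$ in the real form $\fh^\fs_\mR = \bigoplus_i \mR\alpha_i^\vee$ with $\alpha(h) > 1$ for $\alpha \in \Psi$ and $0 < \alpha(h) < 1$ for $\alpha \in \Phi^+(\fs)\setminus\Psi$. The identity $\fs_{\alpha+\beta} = [\fs_\beta,\fs_\alpha]$ when $\alpha+\beta \in \Phi^+(\fs)$, together with $[\fb^\fs, N'] \subseteq N'$, forces $\Psi$ to be an \emph{order filter} in $\Phi^+(\fs)$ for the partial order $\alpha \preceq \gamma \Leftrightarrow \gamma - \alpha \in \mN\Phi^+(\fs)$.

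The problem then reduces to non-emptiness of the open convex polytope
$$
R_\Psi := \bigl\{ h \in \fh^\fs_\mR : \alpha(h) > 1 \text{ for } \alpha \in \Psi,\ 0 < \alpha(h) < 1 \text{ for } \alpha \in \Phi^+(\fs)\setminus\Psi\bigr\},
$$
which I would attack via the Motzkin theorem of alternatives (Farkas for strict linear inequalities): $R_\Psi$ is non-empty unless there exist non-negative scalars $(c_\alpha)_{\alpha\in\Psi}$ and $(d_\alpha),(e_\alpha)_{\alpha\in\Phi^+(\fs)\setminus\Psi}$, not all zero, satisfying
$$
\sum_{\alpha\in\Psi} c_\alpha\,\alpha \;+\; \sum_{\alpha\notin\Psi} e_\alpha\,\alpha \;=\; \sum_{\alpha\notin\Psi} d_\alpha\,\alpha \quad\text{in } (\fh^\fs_\mR)^*,
$$
together with the sign condition $\sum_\alpha c_\alpha \ge \sum_\alpha d_\alpha$. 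Ruling out such a dual certificate is the content of the theorem.

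The hard part will be this exclusion step, which is where the root-theoretic combinatorics of $\fs$ enter essentially. The natural approach is to pair the displayed identity against well-chosen elements of $\fh^\fs_\mR$ — for instance $\rho^\vee$ (producing a height identity) or individual fundamental coweights — and to exploit the filter property: each $\alpha \in \Psi$ dominates some element of the minimal antichain $\Psi^{\min}$ in the root order, while every $\beta \in \Phi^+(\fs)\setminus\Psi$ is incomparable with, or lies strictly below, $\Psi^{\min}$. A potentially cleaner alternative is an inductive construction: begin from $\Psi = \emptyset$ with $h = \varepsilon\rho^\vee$ for small $\varepsilon > 0$, then enlarge $\Psi$ by a root $\gamma$ that is maximal in the current complement and perturb $h$ along a direction $\eta \in \fh^\fs_\mR$ with $\gamma(\eta)$ sizeable while the other $\alpha(\eta)$ remain small, relying on the filter property to ensure no previously active constraint gets violated. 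In either route, the technical core is the combinatorial assertion — specific to finite-type root systems — that filter subsets of $\Phi^+(\fs)$ carry no hidden linear obstruction to this alcove-type description, which is precisely the content of Shi's work.
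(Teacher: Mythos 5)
The paper does not prove this statement at all; it cites \cite[Theorem 1.4]{S}, so there is no internal argument to compare against. You have therefore attempted something the authors deliberately sidestep. Your setup is sound as far as it goes: the observation that $\Psi:=\{\alpha\in\Phi^+(\fs):\fs_\alpha\subseteq N'\}$ is an order filter in the root poset is correct (the $\fb^\fs$-ideal condition gives closure under adding a simple root, and in a finite root system covering relations in the root order are exactly addition of a simple root), the reduction to the real Cartan is legitimate, and your formulation of the Motzkin alternative, including the sign condition $\sum_\alpha c_\alpha\ge\sum_\alpha d_\alpha$, is stated with the correct orientation.

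However, what you have written is a plan, not a proof. Both routes you offer stop exactly where the substance begins. The Motzkin route reduces the theorem to the assertion that a filter $\Psi$ admits no dual certificate; you then write that ruling out such a certificate ``is precisely the content of Shi's work,'' which makes the argument circular --- you have reformulated the theorem, not proved it. The certificate equation $\sum_{\alpha\in\Psi}c_\alpha\alpha+\sum_{\alpha\notin\Psi}e_\alpha\alpha=\sum_{\alpha\notin\Psi}d_\alpha\alpha$ with $\sum c_\alpha\ge\sum d_\alpha$ is not ruled out by any of the pairings you suggest: pairing with $\rho^\vee$ or a fundamental coweight gives only that a weighted sum of heights balances, with no visible tension against $\sum c_\alpha\ge\sum d_\alpha$; the filter property has to enter through some genuine positivity argument, and none is supplied. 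The inductive alternative has the same gap at its core. If $\gamma$ is maximal in $\Phi^+(\fs)\setminus\Psi'$ and one tries to pass from a valid $h'$ for $\Psi'$ to one for $\Psi'\cup\{\gamma\}$, the requirement is to push $\gamma(h)$ above $1$ while every $\beta\notin\Psi'\cup\{\gamma\}$ stays strictly between $0$ and $1$ and every $\alpha\in\Psi'$ stays above $1$; since $\gamma$ may be a sum of several roots in the complement, this is a nontrivial simultaneous adjustment, and ``perturb along a direction $\eta$ with $\gamma(\eta)$ sizeable while the other $\alpha(\eta)$ remain small'' is an aspiration, not an argument --- such an $\eta$ need not exist naively (indeed its existence is essentially equivalent to the nonemptiness one is trying to prove). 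In short: the structural reduction is correct and clean, but the theorem itself is left unproved, as you yourself acknowledge.
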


\begin{prop}\label{prop:ideal=borel}
\begin{itemize}
\item[(i)]Let $N$ be a $\fb^\fs$-ideal of $\fs$ such that $\fb^\fs\subseteq N$. Then $\fb:=\fb^\fs+\mC\xi\partial_\xi + N\otimes\xi$ is a Borel subalgebra of $\fg$.
\item[(ii)]Let $N'$ be a $\fb^\fs$-ideal of $\fs$ such that $N'\subseteq\mf{n}^\fs$. Then $\fb:=\fb^\fs+\mC\xi\partial_\xi + N'\otimes\xi+\mC\partial_\xi$ is a Borel subalgebra of $\fg$.
\end{itemize}
\end{prop}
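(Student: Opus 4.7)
The plan is to produce, for each case, an explicit regular element $H=h+\la\xi\partial_\xi\in\fh$ whose associated positive part $\fg_{\mathsf{Re}=0}\oplus\fg_{\mathsf{Re}>0}$ (in the sense of Section \ref{DefBorel}) coincides with $\fb$. The Cartan component $h\in\hs$ will be supplied by Shi's theorem (Theorem \ref{thm:shi}) applied to a suitable $\fb^\fs$-ideal inside $\fn^\fs$, while the scalar $\la$ will simply be $\pm 1$, chosen so that the quantitative dichotomy $0<\mathsf{Re}\alpha(h)<1$ versus $\mathsf{Re}\alpha(h)>1$ in Shi's theorem translates directly into the signs of $\mathsf{Re}(\alpha\pm\delta)(H)$ needed to place each odd root space of $\fg$ on the correct side.

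For part (ii) one takes $\la=-1$ and applies Shi's theorem to $N'$ itself. Using the root data \eqref{EqRoots}, one verifies: every $\alpha\in\Phi(\fs)^+$ satisfies $\mathsf{Re}\alpha(H)>0$; the weight $\delta$ of $\partial_\xi$ yields $1>0$, placing $\partial_\xi$ in $\fb$; the weight $-\delta$ carried by $\hs\otimes\xi$ yields $-1<0$, correctly keeping those vectors outside $\fb$; and for $\alpha\in\Phi(\fs)$ the vector $\fs_\alpha\otimes\xi$ of weight $\alpha-\delta$ contributes $\mathsf{Re}\alpha(h)-1$, which by Shi's dichotomy is positive exactly when $\fs_\alpha\subseteq N'$ and is automatically negative otherwise (either because $\mathsf{Re}\alpha(h)\in(0,1)$, or because $\alpha$ is a negative root and $N'\subseteq\fn^\fs$ already forces $\fs_\alpha\not\subseteq N'$). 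Hence the triangular decomposition along $H$ reproduces $\fb$ exactly.

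For part (i), we transport $N$ to its partner $N'\subseteq\fn^\fs$ via Lemma \ref{lem:bijection}: writing $N=\fb^\fs\oplus\bigoplus_{\alpha\in T}\fs_{-\alpha}$ for some $T\subseteq\Phi(\fs)^+$, one obtains $N'=\bigoplus_{\alpha\in\Phi(\fs)^+\setminus T}\fs_\alpha$. Apply Shi's theorem to this $N'$ and now take $\la=+1$. The signs attached to $\pm\delta$ flip from part (ii), so $\partial_\xi$ is expelled from $\fb$ and $\hs\otimes\xi$ is admitted, as the statement requires. For $\alpha\in\Phi(\fs)^+$, the vector $\fs_{-\alpha}\otimes\xi$ (weight $-\alpha-\delta$) lies in the positive part iff $1-\mathsf{Re}\alpha(h)>0$, iff $\fs_\alpha\not\subseteq N'$, iff $\alpha\in T$, iff $\fs_{-\alpha}\subseteq N$; while $\fs_\alpha\otimes\xi$ (weight $\alpha-\delta$) always contributes $\mathsf{Re}\alpha(h)+1>0$, consistent with $\fs_\alpha\subseteq\fn^\fs\subseteq\fb^\fs\subseteq N$.

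The main conceptual step is recognizing that the two cases of the proposition are dual sides of Lemma \ref{lem:bijection}: a single Shi-type element $h$ underlies both, and only the sign of $\la$ selects which of the two Borel types arises, accounting for the apparent asymmetry in the descriptions of $\fb_{\ob}$ in Theorem \ref{ThmBorel}(ii). Once this alignment is in place, the verification reduces to the routine sign computations above.
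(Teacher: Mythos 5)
Your proof is correct and takes essentially the same approach as the paper: for each part you build the regular element $H=h\pm\xi\partial_\xi$ by feeding Shi's Theorem~\ref{thm:shi} the ideal $N'\subseteq\fn^\fs$ (for part (i), the partner of $N$ under Lemma~\ref{lem:bijection}) and letting the sign of the $\xi\partial_\xi$-coefficient select the Borel type. The only difference is presentational — you spell out the root-by-root sign computations in part (i), which the paper compresses into ``from the correspondence one checks.''
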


\begin{proof}
Let $N'\subseteq\mf{n}^\fs$ be a $\fb^\fs$-ideal. Let $\alpha$ be a positive root. For simplicity, let us say that $\alpha\in N'$ if $\fs_\alpha\subseteq N'$.
Then, by Theorem \ref{thm:shi} we can find an element $h\in\hs$ with $0<\mathsf{Re}\alpha(h)\not=1$, for every positive root $\alpha$, and $1<\mathsf{Re}\alpha(h)$ if and only if $\alpha\in N'$. Define
\begin{align*}
H':=h-\xi\partial_\xi.
\end{align*}
Then $H'$ is a regular element and for any $\alpha\in\Phi(\fs)$ we have $\mathsf{Re}\alpha(H')-\mathsf{Re}\delta(H')>0$ if and only if $\alpha\in N'$. As $[H',\partial_\xi]=\partial_\xi$, we see that the Borel subalgebra corresponding to this regular element is precisely $\fb:=\fb^\fs+\mC\xi\partial_\xi + N'\otimes\xi+\mC\partial_\xi$. This proves Part (ii).

Recall the correspondence between a $\fb^\fs$-ideal $N$ containing $\fb^\fs$ and an ideal $N'\subseteq\mf{n}^\fs$ in Lemma \ref{lem:bijection}. Let $h$ be the corresponding element in $\hs$ for $N'$. Consider the element
\begin{align*}
H:=h+\xi\partial_\xi.
\end{align*}
Then $H$ is regular and from the correspondence one checks that the Borel subalgebra associated with $H$ is precisely $\fb^\fs+\mC\xi\partial_\xi + N\otimes\xi$. This proves Part (i).
\end{proof}

\begin{proof}[Proof of Theorem~\ref{ThmBorel}]
Given a regular element and its associated Borel subalgebra $\fb$, the odd part $\fb_{\bar 1}$ must be of the form \ref{ThmBorel}(ii)(a) or  \ref{ThmBorel}(ii)(b) by Lemma \ref{lem:borel:odd}. By Proposition \ref{prop:ideal=borel} such odd subspaces are indeed all odd parts of Borel subalgebras with fixed $\fb_{\bar 0}$. Thus, the number of non-conjugate Borel subalgebras of $\fg$ with fixed even part equals precisely twice the number of $\fb^\fs$-ideals inside $\mf{n}^\fs$. The numbers of such ideals for different $\fs$ have been calculated in \cite[Theorems 3.2 and 3.6]{S}, and these numbers are precisely half of the ones listed in the theorem.

 Finally, the closed formulas for these ad-nilpotent ideals in terms of the order of the Weyl group, the Coxeter number, and the exponents have been obtained in \cite[Theorem 1]{CP} by relating these ideals to certain orbits in the corresponding affine Weyl groups.
\end{proof}

\begin{remark}
The number $\frac{1}{n+1}{{2n}\choose{n}}$ appearing in Theorem~\ref{ThmBorel} for $\fs=\mathfrak{sl}(n)$, is the Catalan number and it admits various combinatorial interpretations.
The most relevant interpretation for us is that $\frac{1}{n+1}{{2n}\choose{n}}$ is the number of monotonic lattice paths along the edges of a grid with $n \times n$ square cells, which do not pass above the diagonal. This allows to visualise the Borel subalgebras of $\fg$ elegantly. For instance one can picture the Borel subalgebras in the realisation of $\fg$ in \ref{Matrixg} below, by viewing the $n\times n$-matrix $C$ as such a grid. For $\fs$ of other classical types $BCD$, the numbers $2n\choose n$ and ${2n\choose n}-{2n-2\choose n-1}$ admit similar interpretations as numbers of certain lattice paths inside non-rectangular grids. This gives explicit realisations of such ad-nilpotent ideals of classical types, see \cite[Figures 5--7]{S}.
\end{remark}

\subsection{Some $\fs$-module morphisms}\label{Stuff}
The extension groups between simple $\fg$-modules will be described in Section~\ref{SecExt} in terms of the following $\fs$-module morphisms.
For a finite-dimensional simple $\fs$-module $V$ and $n\in\mZ_{>0}$ we set:
\begin{equation}\label{Stuffeq}D^n[V]: \;\;S^n(\fs)\otimes V\to S^{n-1}(\fs)\otimes V,\quad X_1X_2\cdots X_n\otimes v\mapsto \sum_{i=1}^n X_1X_2\stackrel{\hat{i}}{\cdots} X_n\otimes X_i v.\end{equation}

Recall $\hs$ denotes a Cartan subalgebra of $\fs$. For $\mu\in(\hs)^\ast$, let $V_\mu$ denote the $\mu$-weight space in $V$.

\begin{lemma}\label{LemSurj}
Fix $n\in\mZ_{>0}$ and set $D^n:=D^n[V]$.
\begin{enumerate}
\item The space $S^{n-1}(\hs)\otimes V_\mu\subseteq \im D^n $, for $\mu\not=0$.
\item If $V_0=0$, then $D^n $ is surjective.
\item Take $\mu\in(\hs)^\ast$ and $\alpha_i\in\Phi(\fs)\sqcup\{0\}$, for $1\le i\le n-1$, such that $\mu+\sum_{i\in S}\alpha_i\not=0$ for every subset $S$ of the interval $[1,n-1].$  Then $X_1X_2\cdots X_{n-1}\otimes v$ is in $\im D^n $ for $v\in V_\mu$ and $X_i\in\fs_{\alpha_i}$.
\item If $V\not=\mC$, then $\coker D^2 $ is either zero or else a trivial $\fs$-module.
\item The morphism $D^1$ is surjective if and only if $V\not=\mC$.
\end{enumerate}
\end{lemma}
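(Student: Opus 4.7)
The plan is to prove the five parts in the order $(5)$, $(1)$, $(3)$, $(2)$, $(4)$, as each builds on the earlier ones.

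Part $(5)$ is immediate: $\im D^1 = \fs V$, and for a finite-dimensional simple $\fs$-module $V$, $\fs V = V$ iff $V \neq \mC$. For part $(1)$, I would restrict $D^n$ to $S^n(\hs)\otimes V_\mu$; on this subspace $D^n$ agrees with $\partial_\mu \otimes \mathrm{id}$, where $\partial_\mu$ is the derivation of $S^\bullet(\hs)$ sending $H \in \hs$ to $\mu(H)$. Picking $H_0 \in \hs$ with $\mu(H_0) \neq 0$ and inducting on monomial complexity shows $\partial_\mu\colon S^n(\hs)\to S^{n-1}(\hs)$ is surjective as soon as $\mu \neq 0$, giving $(1)$.

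For part $(3)$ I would induct on $k = |\{i : \alpha_i \neq 0\}|$, the base $k = 0$ being exactly $(1)$. After reordering so that $X_1, \ldots, X_k$ are the non-zero-weight factors and $X_{k+1}, \ldots, X_{n-1} \in \hs$, the central identity is
\begin{equation*}
D^n(X_1 \cdots X_k\, P \otimes v) \;=\; X_1 \cdots X_k \cdot \partial_\mu(P) \otimes v + \sum_{i=1}^{k} X_1 \cdots \widehat{X_i} \cdots X_k\, P \otimes X_i v, \qquad P \in S^{n-k}(\hs).
\end{equation*}
The inner-sum terms have only $k-1$ non-zero-weight factors, and the non-vanishing hypothesis for the reduced data $(\mu+\alpha_i,(\alpha_j)_{j \neq i})$ is a specialisation of the original one (apply it to subsets $T \subseteq [1,n-1]$ containing $i$); they therefore lie in $\im D^n$ by induction. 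Surjectivity of $\partial_\mu\colon S^{n-k}(\hs) \to S^{n-1-k}(\hs)$ then forces $X_1 \cdots X_k \cdot Q \otimes v \in \im D^n$ for every $Q$, which includes the element of $(3)$. Part $(2)$ follows from the same induction, with the simplification that when $V_0 = 0$, the inner-sum terms with $\mu + \alpha_i = 0$ vanish automatically because $X_i v \in V_0 = 0$.

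Part $(4)$ rests on analysing the Casimir $\Omega = \sum_i e_i^2 \in U(\fs)$ (orthonormal basis of $\fs$) on $\coker D^2$. Leibniz gives
\begin{equation*}
\Omega \cdot (Y \otimes v) \;=\; (c_\fs + c_V)(Y \otimes v) + 2 \sum_i [e_i, Y] \otimes e_i v,
\end{equation*}
and chaining three $D^2$-identities---applied to $\sum_i e_i^2 \otimes Yv$, to $\sum_i(Ye_i) \otimes (e_i v)$, and to $\sum_i e_i [e_i, Y] \otimes v$---shows $\sum_i [e_i, Y] \otimes e_i v \equiv -c_V(Y \otimes v) \pmod{\im D^2}$; hence $\Omega$ acts on $\coker D^2$ as the scalar $c_\fs - c_V$. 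Since $\coker D^2$ is a semisimple $\fs$-module, every simple constituent $L$ carries this Casimir eigenvalue. Case analysis: if $V_0 = 0$, $\coker D^2 = 0$ by $(2)$; if $V \cong \fs$, the scalar is $0$ and since the trivial module is the unique simple $\fs$-module with vanishing Casimir, the cokernel is a direct sum of trivial modules; if $V \not\cong \mC,\fs$ and $c_V > c_\fs$, the scalar is negative, impossible on a semisimple module whose simple constituents all have non-negative Casimir, forcing $\coker D^2 = 0$; in the remaining sub-case ($V \not\cong \mC,\fs$, $V_0 \neq 0$, $c_V < c_\fs$) one checks directly, via the dominance constraint $\la \le \la_V + \theta$, that no non-trivial simple summand of $\fs \otimes V$ realises the Casimir value $c_\fs - c_V$.

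The main obstacle is part $(4)$: the Casimir identity itself requires careful chaining of three $D^2$-computations, and ruling out the final sub-case needs a Weyl-chamber calculation showing that the Casimir eigenvalue $c_\fs - c_V$ is not attained by any non-trivial simple summand of $\fs \otimes V$.
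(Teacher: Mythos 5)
Your proofs of parts (1), (2), (3), (5) are essentially the paper's argument: the paper inducts downwards on the number of Cartan factors in a monomial, you induct upwards on the number of non-Cartan factors, which is the same thing. The key identity you isolate (applying $D^n$ to $X_1\cdots X_k P\otimes v$ and using surjectivity of $\partial_\mu$ on $S^\bullet(\hs)$) is exactly what the paper's "downwards induction on degree" encodes, and the weight bookkeeping for part (3) is handled correctly.

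For part (4) you take a genuinely different route. Your Casimir computation is correct: the chain of three $D^2$-identities does give $\sum_i[e_i,Y]\otimes e_iv\equiv -c_V(Y\otimes v)\pmod{\im D^2}$, so $\Omega$ acts on $\coker D^2$ by the scalar $c_\fs-c_V$. That is a nice conceptual observation. However, the final sub-case is a genuine gap, which you yourself flag. The case $V\not\cong\mC,\fs$, $V_0\neq 0$, $c_V<c_\fs$ is \emph{not} vacuous: for $\fs=\mathfrak{so}(5)$ and $V=\mC^5$ the vector representation, one has $V_0\neq 0$, $V\not\cong\fs$, and $c_V<c_\fs$. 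In that and the other non-simply-laced cases (where necessarily $V\cong L(\theta_s)$ with $\theta_s$ the highest short root), you need to show that $c_\fs-c_V$ is not the Casimir eigenvalue of any non-trivial simple summand of $\fs\otimes V$. This does not follow merely from the dominance constraint $\mu\le\lambda_V+\theta$; it requires a separate, type-dependent Casimir inequality that you assert but do not prove. The paper's proof sidesteps this entirely: it takes a putative highest weight vector $x=\sum_i X_i\otimes v_i\in\fs\otimes V$ of non-trivial weight $\lambda$, disposes of all terms except a possible $X_\lambda\otimes u$ with $u\in V_0$ by part (3), and then produces $X_\lambda\otimes u$ inside $\im D^2$ by applying a single lowering operator $X_{-\alpha_1}$ to the element $X_\lambda\otimes w$ (with $w=X_{-\alpha_2}\cdots X_{-\alpha_k}v^+$), using part (3) again on the resulting two terms. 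That direct construction closes the case uniformly in $\fs$ and $V$, whereas your Casimir route, as written, does not.
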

\begin{proof} For $\mu\not=0$, we can choose a basis $\{H_{i}\}$ of $\hs$ such that $\mu(H_{1})=1$ and $\mu(H_{i})=0$ for $i\not=1$. Now take a basis of $S^{n-1}(\hs)\otimes V_\mu$ where each basis element has the form
$H_{1}^a f\otimes v$, for $0\le a\le n-1$ and $f$ a product of $n-1-a$ elements $H_{i}$, $i\not=1$. Since we have
$$D^n(H_{1}^{a+1}f\otimes v)=(a+1) H_{1}^af\otimes v,$$
part (1) follows.

Now we prove part (2). For each root in $\fs$, we choose a non-zero root vector. Together with our basis of $\hs$ for each weight vector $v$, this yields a basis $B_v$ of $\fs$ for each weight vector. We then have a basis of $S^{n-1}(\fs)\otimes V$ where each basis element is of the form $g\otimes v$, with $v\in V$ a weight vector and $g$ a monomial in the basis $B_v$. We refer to the number of factors in the monomial $g$ which are in $\hs$ as the `degree' of $g\otimes v$. We prove that each basis element is in the image of $D^n$, by (downwards) induction on the degree. If the degree is $n-1$, we have already proved the claim in the first paragraph. Assuming the claim is proved for degree $k$ and $g\otimes v$ is a basis element of degree $k-1$, then $D^n(H_1g\otimes v)$ is equal to $mg\otimes v$, for some $m>0$, plus a linear combination of basis elements of strictly higher degree. Since the latter are in the image of $D^n$ by induction hypothesis, so is $g\otimes v$. This concludes the proof.

Part (3) can be proved identically to part (2). By the assumption on the weights, no vector in $V_0$ will appear in the procedure.

For Part (4) we freely use that $D^2$ is an $\fs$-module morphism. We consider a highest weight vector $x=\sum_i X_i\otimes v_i$ in $\fs\otimes V $ of weight $\lambda\not=0$. If $\lambda\not\in\Phi(\fs)\sqcup\{0\}$, then $x$ is in $\im D^2$, by Part (3). Now assume that $\lambda\in\Phi(\fs)$. Then the above summation defining $x$ contains a term $X_\lambda\otimes u$, for some $X_\lambda\in\fs_\lambda$ (and hence $u\in V_0$), while all other terms are in $\im D^2$ by Part (3). Now, $u$ is a linear combination of vectors of the form $u':=X_{-\alpha_1}\cdots X_{-\alpha_k}v^+$, for $v^+$ a highest weight vector of $V$ and $X_{-\alpha_i}\in \fs_{-\alpha_i}$ for positive roots $\alpha_i$ and $k>0$.

We set $w=X_{-\alpha_2}\cdots X_{-\alpha_k}v^+$ and $Y=[X_{-\alpha_1}, X_{\lambda}]$. Then we have
$$X_{-\alpha_1}(X_\lambda\otimes w)\;=\; X_\lambda\otimes u'\,+\, Y\otimes w.$$
By Part (3), $X_\lambda\otimes w$ is in $\im D^2$, since $\lambda+\alpha_1\not=0$. The term $Y\otimes w$ (which might be zero) is also in $\im D^2$ by Part (3). Consequently $X_\lambda\otimes u'$, and hence also $X_\lambda\otimes u$ and $x$, lies in $\im D^2$. Thus, any singular vector of non-trivial weight lies in $\im D^2$, and Part (4) follows.

Part (5) is obvious.
\end{proof}

\begin{remark}
 For $\fs=\mathfrak{sl}(2)$, the morphisms $D^i[\fs]$ are not surjective. For instance $\coker D^2[\fs]$ contains a copy of the trivial representation and $\coker D^3[\fs]$ contains a copy of the adjoint representation.
\end{remark}


\section{Character formulae, block decomposition and invariant theory}\label{Secblocks}

In this section we start the study of the representation theory of the Lie superalgebra $\fg$, as defined in \eqref{Defg}, in the category $\cF(\fg)$ of Section \ref{DefF}.

\subsection{Some reductions}
We reduce the problem of studying $\cF(\fg)$ to an equivalent but `smaller' problem. We will typically denote a weight in $\fh^\ast$ by $\lambda+a\delta$, with $\lambda\in (\fh^{\fs})^\ast$ and $a\in\mC$ for the root $\delta\in\Phi(\fg)$.

Note that the category $\cF(\fg)$ does not depend on the choice of Borel subalgebra. When we label the simple modules by their highest weight or consider (co)standard modules we will always take the Borel subalgebra
$$\fb:=\fb^{\fs}\oplus\mC\xi\partial_\xi\oplus\mC\partial_\xi=\fb_{\oa}\oplus\fg_1$$
of minimal dimension, see Example~\ref{ExBorel}.
\subsubsection{Integrability}
Consider a reductive algebraic group $S$ over $\mC$ with underlying Lie algebra $\fs$. We will focus on finite-dimensional $\fg$-modules for which the $(\fs\oplus\mC)\cong\fg_0$-module structure integrates to $S\times \mathbb{G}_m$, with $\mathbb{G}_m=\mC^\times$ the multiplicative group. In other words, we consider the category of modules $\cF(\fg)_{int}$ for the supergroup pair $(S\times\mathbb{G}_m,\fg)$. Concretely, only weights $\lambda+a\delta$ with $\lambda$ integral and $a\in\mZ$ appear in the modules in $\cF(\fg)_{int}$. We denote this set of weights by $\Lambda\subset\fh^\ast$.

It follows easily that
$$\cF(\fg)\;\cong\;\bigoplus_{x\in\mC/\mZ} \cF(\fg)_{int},$$
where $\mC/\mZ$ is the quotient of $\mC$ with respect to the canonical action $\mZ\acts\mC$.

\subsubsection{Parity}
We define the function
$$p:\;\Lambda\to \mZ/2,\quad \lambda+a\delta\mapsto a\hspace{-2mm}\mod 2.$$
It follows immediately from~\eqref{EqRoots} that
$$\cF(\fg)_{int}\cong\cF\oplus\cF', \qquad\mbox{with }\; \Pi:\cF\stackrel{\sim}{\to}\cF'$$
where $\cF$ is the full subcategory of modules where vectors of weight $\kappa\in\Lambda$ have degree $p(\kappa)$. The category $\cF'$ can be defined similarly, or as the image of $\cF$ under $\Pi$. Without loss of generality we will therefore focus on $\cF$. Simple highest weight modules $L(\kappa)$ will be assumed to be in $\cF$, meaning we let the highest weight vector be of parity $p(\kappa)$.  We follow the corresponding convention for $\fg_0$-modules, which means that $L^0(\lambda+a\delta)$ is of the same parity as $a$.

In $\cF$ simple highest weight modules are uniquely defined up to isomorphism by their highest weights and we denote by $\Lambda^+\subset\Lambda$ the corresponding set of integral dominant weights.
 In a way analogous to the definition of $\cF$ above we define the corresponding BGG category $\cO$ of those modules which only have non-zero weight spaces for weights in $\Lambda$ with the same parity condition. This reduction is justified by the same arguments.

To make a distinction between $\fg_0$-modules and $\fs$-modules we will denote the simple $\fs$-module with highest weight $\lambda\in (\hs)^\ast$ by $L^0_\lambda$ and always consider it as purely even.

\subsection{Character formulae}
In this subsection we determine the combinatorics of the structural modules in the highest weight category $\cF$. Note that we derived above that $\cF(\fg)$ is a direct sum of categories equivalent to $\cF$. Simple modules in $\cF$ are uniquely determined by their highest weight $\lambda+a\delta\in\Lambda^+$.

\begin{lemma}\label{LemSimp}
If $\lambda\not=0$, we have $\nabla(\lambda+a\delta)=L(\lambda+a\delta)$. Moreover, there is a short exact sequence
$$0\to L(a\delta)\to \nabla(a\delta)\to L((a-1)\delta)\to 0.$$
Consequently, we have
$$\Res^{\fg}_{\fg_0}L(a\delta)\cong L^0(a\delta)\quad\mbox{and}\quad \Res^{\fg}_{\fg_0} L(\lambda+a\delta)\cong L^0(\lambda+a\delta)\oplus  L^0(\lambda+(a-1)\delta).$$
\end{lemma}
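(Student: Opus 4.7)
The plan is to construct an explicit model of $\nabla(\mu)$ for $\mu = \lambda + a\delta \in \Lambda^+$ by unpacking the coinduction, and then read off the submodule structure directly.

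By PBW, $U(\fg) \cong U(\fg_{\le 0}) \otimes U(\fg_1)$ as a left $U(\fg_{\le 0})$-module, and $U(\fg_1) = \mC \oplus \mC\partial_\xi$ is two-dimensional. Thus evaluation at $1$ and at $\partial_\xi$ gives an identification
$$\nabla(\mu) \;=\; \Hom_{U(\fg_{\le 0})}(U(\fg), L^0(\mu)) \;\cong\; L^0(\mu) \oplus L^0(\mu-\delta)$$
as $\fg_0$-modules, where the $-\delta$ shift on the second summand reflects that $\partial_\xi$ has weight $\delta$. I write a general element as a pair $(v_0,v_1) := (\phi(1), \phi(\partial_\xi))$.

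Next I would compute the off-diagonal part of the $\fg$-action via the rule $(X\phi)(u) = \pm\phi(uX)$. Since $\partial_\xi^2 = \frac{1}{2}[\partial_\xi,\partial_\xi] = 0$ in $U(\fg)$, the action of $\partial_\xi$ becomes $(v_0,v_1)\mapsto(\pm v_1, 0)$. For $Y = y\otimes\xi \in \fg_{-1}$, the PBW reordering $\partial_\xi Y = y - Y\partial_\xi$ (which uses $[\partial_\xi, y\otimes\xi] = y$) together with the triviality of $\fg_{-1}$ on $L^0(\mu)$ yields $Y\cdot(v_0,v_1) = (0,\pm y v_0)$, where $y$ acts via its $\fs$-action on $L^0_\lambda$.

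Since $L^0(\mu)$ and $L^0(\mu-\delta)$ are non-isomorphic simple $\fg_0$-modules (distinguished by the $\xi\partial_\xi$-weight $-a$ versus $1-a$), the $\fg_0$-submodules of $\nabla(\mu)$ are exactly $0$, $L^0(\mu)\oplus 0$, $0\oplus L^0(\mu-\delta)$, and $\nabla(\mu)$. The third of these is never $\partial_\xi$-stable, while the second is stable under every $Y = y\otimes\xi$ if and only if $\fs$ acts trivially on $L^0_\lambda$, i.e., if and only if $\lambda = 0$. Hence for $\lambda \neq 0$ the module $\nabla(\mu)$ is simple and equals $L(\mu)$; for $\lambda = 0$ (so $\mu = a\delta$), the unique proper nonzero submodule $L^0(a\delta)\oplus 0$ is one-dimensional with highest weight $a\delta$ and thus equals $L(a\delta)$, while the quotient $L^0((a-1)\delta)$ must be $L((a-1)\delta)$, yielding the asserted short exact sequence. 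The restriction formulae are now read off from the above $\fg_0$-decomposition of $\nabla$. The only genuine computation is the PBW reordering giving the $\fg_{-1}$-action; after that both halves of the lemma drop out together from the submodule analysis.
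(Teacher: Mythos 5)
Your proposal is correct and follows essentially the same route as the paper: identify $\nabla(\mu)$ as $L^0(\mu)\oplus L^0(\mu-\delta)$ over $\fg_0$, compute how $\partial_\xi$ and $\fg_{-1}$ connect the two summands, and read off which $\fg_0$-submodules are $\fg$-stable. The paper performs the same key commutator computation $(X\otimes\xi)\partial_\xi v=-Xv$ without spelling out the PBW realisation of the coinduction or listing the four $\fg_0$-submodules, but the mechanism and conclusion are identical.
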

\begin{proof}
As a $\fg_0$-module, $\nabla(\lambda+a\delta)$ is clearly the sum of $L^0(\lambda+a\delta)$ and $L^0(\lambda+(a-1)\delta)$, where $\partial_\xi$ acts as an $\fs$-linear morphism from $L^0(\lambda+(a-1)\delta)$ to $L^0(\lambda+a\delta)$. For an arbitrary $v\in L^0(\lambda+(a-1)\delta)$ and $X\in\fs$, we have $(X\otimes\xi) v=0$ and hence
$$(X\otimes \xi)\partial_\xi v\;=\; [X\otimes \xi,\partial_\xi]v\;=\;-Xv.$$
Consequently, $\nabla(\lambda+a\delta)$ is simple if and only if $\lambda\not=0$.
\end{proof}


\begin{corollary}\label{CorBGG} Assume $\lambda\not=0$.
\begin{enumerate}
\item  We have $P(\lambda+a\delta)=\Delta(\lambda+a\delta)$, and furthermore a short exact sequence
$$0\to \Delta((a+1)\delta)\to P(a\delta)\to \Delta(a\delta)\to 0.$$
\item We have
$$
\label{EqDual}
L(\lambda+a\delta)^\ast\cong L(-w_0\lambda+(1-a)\delta)\quad\mbox{and}\quad L(a\delta)^\ast\cong L(-a\delta),
$$
where $w_0$ is the longest element of the Weyl group of $\fs$.
\end{enumerate}
\end{corollary}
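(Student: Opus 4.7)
The plan is to derive both statements from Lemma~\ref{LemSimp}: Part~(1) via BGG reciprocity \eqref{BGGrec}, and Part~(2) via the explicit $\fg_0$-decomposition of simples recorded in the same lemma.

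For Part~(1), I would compute $(P(\nu):\Delta(\kappa))=[\nabla(\kappa):L(\nu)]$ using that, by Lemma~\ref{LemSimp}, $\nabla(\mu+b\delta)$ is simple when $\mu\neq 0$, while $\nabla(b\delta)$ has exactly two composition factors $L(b\delta)$ and $L((b-1)\delta)$. When $\lambda\neq 0$, the simple $L(\lambda+a\delta)$ therefore appears only in $\nabla(\lambda+a\delta)$, forcing $(P(\lambda+a\delta):\Delta(\kappa))=\delta_{\kappa,\lambda+a\delta}$ and hence $P(\lambda+a\delta)=\Delta(\lambda+a\delta)$. For $\nu=a\delta$, the simple $L(a\delta)$ occurs precisely in $\nabla(a\delta)$ and $\nabla((a+1)\delta)$, so $P(a\delta)$ admits a length-two $\Delta$-filtration with factors $\Delta(a\delta)$ and $\Delta((a+1)\delta)$. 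Since $\delta$ is the weight of $\partial_\xi\in\fg_1\subset\fb$, we have $(a+1)\delta>a\delta$ in the BGG order, so the standard highest weight category convention places $\Delta(a\delta)$ as the quotient and $\Delta((a+1)\delta)$ as the submodule, giving the asserted short exact sequence.

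For Part~(2), I would use that duality preserves simplicity in $\cF$, so it suffices to identify the highest weight of $L(\nu)^\ast$ with respect to the minimal Borel $\fb=\fb_{\oa}\oplus\mC\partial_\xi$. The module $L(a\delta)\cong L^0(a\delta)$ is one-dimensional, which immediately yields $L(a\delta)^\ast\cong L(-a\delta)$. For $\lambda\neq 0$, dualising the $\fg_0$-decomposition from Lemma~\ref{LemSimp} gives
\[
\Res^{\fg}_{\fg_0} L(\lambda+a\delta)^\ast\;\cong\; L^0(-w_0\lambda-a\delta)\,\oplus\, L^0(-w_0\lambda+(1-a)\delta).
\]
Again by Lemma~\ref{LemSimp}, any simple module in $\cF$ with this $\fg_0$-content must have highest weight whose $\fs$-component is $-w_0\lambda$ and whose $\delta$-components form a consecutive pair $(b,b-1)$; only $b=1-a$ fits the two summands above, so $L(\lambda+a\delta)^\ast\cong L(-w_0\lambda+(1-a)\delta)$.

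The only thing that really needs care is keeping conventions straight---the choice of minimal Borel, the $\mZ/2$-parity convention fixed in the reduction preceding the corollary, and the direction of the non-split $\fg_0$-extension recorded in Lemma~\ref{LemSimp} (i.e.\ which of the two $\fg_0$-summands sits on top). Once these are settled, neither part requires any computation beyond what Lemma~\ref{LemSimp} and reciprocity already supply.
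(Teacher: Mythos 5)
Your proof is correct and follows essentially the same route the paper intends: Part~(1) via BGG reciprocity \eqref{BGGrec} combined with the composition series of the costandard modules from Lemma~\ref{LemSimp}, and Part~(2) by matching the $\fg_0$-decomposition from Lemma~\ref{LemSimp} after dualising. The paper simply states both parts are "immediate consequences of Lemma~\ref{LemSimp} and Equation~\eqref{BGGrec}," and your write-up is exactly the intended unpacking.
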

\begin{proof}
These are immediate consequences of Lemma~\ref{LemSimp} and Equation~\eqref{BGGrec}.
\end{proof}

\begin{remark}\label{RemSimp}
Lemma~\ref{LemSimp} allows us to describe the simple $\fg$-module $L(\lambda+a\delta)$ with $\lambda\not=0$ as follows.
We have $\fs$-module morphisms $\sigma_i:L^0_\lambda\to L(\lambda+a\delta)$ for $i\in\{0,1\}$, with $\sigma_i$ of degree $i$, which yield an isomorphism
$$\Pi^{p(a)}L^0_\lambda\oplus \Pi^{p(a-1)}L^0_\la\,\stackrel{\sim}{\to}\, L(\lambda+a\delta),\qquad (\Pi^{p(a)}u,\Pi^{p(a-1)} v)\mapsto \sigma_{p(a)}(u)+\sigma_{p(a-1)}(v).$$
The remainder of the $\fg$-action is described by ($X\in\fs$):
\begin{align*}
&\xi\partial_{\xi}(\sigma_{p(a-i)}(u))=(a-i)\sigma_{p(a-i)}(u),\quad (X\otimes \xi)(\sigma_{p(a)}(u))=\sigma_{p(a-1)}(Xu),\\
&\partial_\xi(\sigma_{p(a-1)}(v))=\sigma_{p(a)}(v), \qquad (X\otimes \xi)(\sigma_{p(a-1)}(v))=0=\partial_\xi(\sigma_{p(a)}(u)).
\end{align*}
\end{remark}


\begin{lemma}\label{LemDelta}
For all $\lambda+a\delta,\mu+b\delta\in\Lambda^+$, we have
$$[\Delta(\lambda+a\delta):L(\mu+b\delta)]\;=\; \begin{cases}  \sum_{i=0}^{a-b}(-1)^{a-b-i}[\wedge^i\fs\otimes L^0_\lambda:L^0_\mu]&\mbox{ if $b\le a$ and $\mu\not=0$}\\
\left[\wedge^{a-b}\fs\otimes L^0_\lambda:\mC\right]&\mbox{ if $b\le a$ and $\mu=0$}\\
0&\mbox{if $b> a$}
\end{cases}$$
\end{lemma}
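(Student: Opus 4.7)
The plan is to compare the $\fg_{\oa}$-composition series of $\Delta(\la+a\delta)$ computed via PBW with the series obtained by restricting the (unknown) $\fg$-composition series through Lemma~\ref{LemSimp}. The two expressions must agree, and this will determine the multiplicities.

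First I apply PBW to the parabolic induction. Since $\fg_{-1}=\fs\otimes\xi$ is purely odd and abelian, $U(\fg_{-1})\cong\wedge^{\bullet}(\fs)$ as $\fg_0$-modules (with $\fs$ carrying the adjoint action), and hence
$$\Delta(\la+a\delta)\;\cong\; \wedge^{\bullet}(\fs)\otimes L^0(\la+a\delta)\qquad\text{as }\fg_{\oa}\text{-modules}.$$
Tracking the $\xi\partial_\xi$-eigenvalues, each factor of $\fg_{-1}$ shifts the $\delta$-coordinate down by one, so $\wedge^i(\fs)\otimes L^0_\la$ sits in $\delta$-weight $a-i$, with parity $p(a-i)$, consistent with the convention on $\cF$. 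Therefore, for all $\mu\in(\hs)^\ast$ and $c\in\mZ$,
$$[\Res^{\fg}_{\fg_{\oa}}\Delta(\la+a\delta):L^0(\mu+c\delta)]\;=\;\begin{cases}[\wedge^{a-c}\fs\otimes L^0_\la:L^0_\mu]&\text{if }0\le c\le a,\\ 0&\text{otherwise}.\end{cases}$$
In particular no composition factor $L(\mu+b\delta)$ of $\Delta(\la+a\delta)$ can have $b>a$, which handles the third case of the statement.

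Next I invert this via Lemma~\ref{LemSimp}: $\Res L(c\delta)\cong L^0(c\delta)$, whereas $\Res L(\nu+c\delta)\cong L^0(\nu+c\delta)\oplus L^0(\nu+(c-1)\delta)$ for $\nu\neq 0$. Comparing multiplicities of $L^0(\mu+b\delta)$ in $\Res\Delta(\la+a\delta)$ via additivity on composition series then splits into two cases. For $\mu=0$, only $L(b\delta)$ among the simples $L(\nu+c\delta)$ has an $L^0(b\delta)$-factor in its $\fg_{\oa}$-restriction, so immediately
$$[\Delta(\la+a\delta):L(b\delta)]\;=\;[\wedge^{a-b}\fs\otimes L^0_\la:\mC],$$
which is the second case. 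For $\mu\neq 0$, both $L(\mu+b\delta)$ and $L(\mu+(b+1)\delta)$ contribute $L^0(\mu+b\delta)$ after restriction, so the multiplicities $N_b:=[\Delta(\la+a\delta):L(\mu+b\delta)]$ satisfy
$$N_b+N_{b+1}\;=\;[\wedge^{a-b}\fs\otimes L^0_\la:L^0_\mu].$$

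Finally I solve this recursion by downward induction on $b$, using $N_b=0$ for $b>a$ as the base. Telescoping gives $N_b=\sum_{j=0}^{a-b}(-1)^j[\wedge^{a-b-j}\fs\otimes L^0_\la:L^0_\mu]$, and the substitution $i=a-b-j$ converts this into the alternating sum of the first case. The only step requiring care is the PBW identification together with the bookkeeping of $\delta$-weights (and parities, which are forced to match by $p(\la+a\delta)=a\bmod 2$); the inversion afterwards is a routine telescoping argument, so I do not anticipate any serious obstacle.
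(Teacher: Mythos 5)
Your proof is correct and follows essentially the same route as the paper: PBW gives $\Res^{\fg}_{\fg_0}\Delta(\la+a\delta)\cong\bigoplus_i\wedge^i\fs\otimes L^0(\la+(a-i)\delta)$, and then the known restrictions of simples from Lemma~\ref{LemSimp} are inverted. The paper leaves the inversion as "follows easily"; you have just made the telescoping explicit.
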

\begin{proof}
We have
$$\mathrm{res}^{\fg}_{\fg_0}\Delta(\lambda+a\delta)\;\cong\; \bigoplus_{i=0}^{\dim \fs}\wedge^i\fs\otimes L^0(\lambda+(a-i)\delta).$$
Since we already know $\mathrm{res}^{\fg}_{\fg_0} L(\mu+b\delta)$, the lemma follows easily.
\end{proof}

\begin{prop}\label{PropCartan}
If $\lambda\not=0$, we have
$$[P(\lambda+a\delta):L(\mu+b\delta)]\;=\; \begin{cases} \sum_{i=0}^{a-b}(-1)^{a-b-i}[\wedge^i\fs\otimes L^0_\lambda:L^0_\mu]&\mbox{ if $b\le a$ and $\mu\not=0$}\\
\left[\wedge^{a-b}\fs\otimes L^0_\lambda:\mC\right]&\mbox{ if $b\le a$ and $\mu=0$}\\
0&\mbox{if $b> a$}
\end{cases}$$
Furthermore, we have
$$[P(a\delta):L(\mu+b\delta)]\;=\; \begin{cases}  [\wedge^{a-b+1}\fs:L^0_\mu]&\mbox{ if $b\le a$ and $\mu\not=0$}\\
\left[\wedge^{a-b}\fs:\mC\right]+\left[\wedge^{a-b+1}\fs:\mC\right]&\mbox{ if $b\le a$ and $\mu=0$}\\
\delta_{\mu,0}&\mbox{if $b=a+1$}\\
0&\mbox{if $b> a+1$}
\end{cases}$$

\end{prop}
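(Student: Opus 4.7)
The plan is a direct computation via BGG reciprocity, reducing everything to the standard-module multiplicity formula of Lemma~\ref{LemDelta}.

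For the first case $\lambda\not=0$, I would simply invoke Corollary~\ref{CorBGG}(1): it gives $P(\lambda+a\delta)=\Delta(\lambda+a\delta)$, so the first formula is nothing but a restatement of Lemma~\ref{LemDelta}, no further work required.

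For the principal block case $P(a\delta)$, the strategy is to exploit the other half of Corollary~\ref{CorBGG}(1), namely the short exact sequence
$$0\to \Delta((a+1)\delta)\to P(a\delta)\to \Delta(a\delta)\to 0.$$
Since composition multiplicities are additive on short exact sequences, this yields
$$[P(a\delta):L(\mu+b\delta)]=[\Delta(a\delta):L(\mu+b\delta)]+[\Delta((a+1)\delta):L(\mu+b\delta)],$$
and each summand is computed by Lemma~\ref{LemDelta}, where the factor $L^0_\lambda$ becomes the trivial $\fs$-module $\mC$ since the highest weights $0+a\delta$ and $0+(a+1)\delta$ have trivial $\fs$-part.

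What remains is a short case analysis. For $\mu\not=0$ and $b\le a$, I would write out the two alternating sums $\sum_{i=0}^{a-b}(-1)^{a-b-i}[\wedge^i\fs:L^0_\mu]$ and $\sum_{i=0}^{a-b+1}(-1)^{a-b+1-i}[\wedge^i\fs:L^0_\mu]$; because the signs in position $i\le a-b$ are opposite in the two sums, everything cancels in pairs and only the top-degree term $[\wedge^{a-b+1}\fs:L^0_\mu]$ survives. For $\mu=0$ and $b\le a$ the two summands are just single terms and add to $[\wedge^{a-b}\fs:\mC]+[\wedge^{a-b+1}\fs:\mC]$. For $b=a+1$ only $\Delta((a+1)\delta)$ can contribute, producing the lone term $[\wedge^0\fs:L^0_\mu]=\delta_{\mu,0}$, while for $b>a+1$ both multiplicities vanish. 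I do not anticipate any real obstacle here: the whole argument is bookkeeping on alternating sums of restriction multiplicities, granted the short exact sequence from Corollary~\ref{CorBGG} and the formula from Lemma~\ref{LemDelta}; the only mildly delicate point is checking that the sign cancellation in the $\mu\neq 0$ case really leaves precisely one term.
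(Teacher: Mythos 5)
Your proof is correct and takes exactly the same route as the paper, which itself states that the proposition is ``an immediate application of Corollary~\ref{CorBGG}(1) and Lemma~\ref{LemDelta}''; you have simply spelled out the bookkeeping (additivity of composition multiplicities on the short exact sequence and the telescoping cancellation of the two alternating sums when $\mu\neq 0$) that the authors left implicit.
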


\begin{proof}
This is an immediate application of Corollary~\ref{CorBGG}(1) and Lemma~\ref{LemDelta}.
\end{proof}

\begin{remark}
A description of the $\fg_0\oplus\fg_{+1}$-action on the structural modules in $\cF$ is also given in \cite[Example~9.11]{Serga}.
\end{remark}
\subsection{Block decomposition}

For $\la+a\delta$ and $\mu+b\delta$ in $\Lambda^+$, we say that $\la+a\delta$ and $\mu+b\delta$ are {\em$\cF$-linked} if  $L(\la+a\delta)$ and $L(\mu+b\delta)$ lie in the same block in the category $\mathcal F$. We write $$\la+a\delta\sim\mu+b\delta.$$

Similarly, for $\la+a\delta$ and $\mu+b\delta$ in $\Lambda$, we say that $\la+a\delta$ and $\mu+b\delta$ are {\em $\cO$-linked} if  $L(\la+a\delta)$ and $L(\mu+b\delta)$ lie in the same block in the category $\mathcal O$. Naturally, two dominant weights are $\cO$-linked, if they are $\cF$-linked. We shall see in Corollary \ref{cor:blocks:O} and Proposition \ref{prop:block:sl2} below that the converse is also true. When there is no confusion we shall simply say `linked' when we mean $\cF$-linked. By Corollary \ref{CorBGG}(1) or the fact that it has simple top, every composition factor of $\Delta(\la+a\delta)$ is linked to $\la+a\delta$. We will use this fact freely. We also set
$$\rho_0=\frac{1}{2}\sum_{\alpha\in\Phi^+(\fs)}\alpha.$$

\begin{lemma}\label{lem:tensor}Assume that ${\rm rank}\,\mf s\ge 2$. Let $\la$ be a dominant $\fs$-weight such that for every root $\beta$ of $\mf s$ the weight $\la+\beta$ is dominant. Then
$$\lambda+a\delta\,\sim\,\lambda+b\delta\quad\mbox{and}\quad \lambda+a\delta\sim \lambda+\beta+b\delta,$$
for all $a,b\in\mZ$ and $\beta\in\Phi(\fs)$.
\end{lemma}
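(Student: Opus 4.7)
\emph{Plan.} My strategy is to read off a single non-trivial composition factor of $\Delta(\lambda+a\delta)$ from Lemma~\ref{LemDelta}, as this suffices to establish the linking. I first note that the hypothesis forces $\lambda\neq 0$: taking $\beta=-\theta$ for $\theta$ the highest root, dominance of $\lambda-\theta$ rules out $\lambda=0$. Since $\lambda+\nu$ is dominant for every weight $\nu$ of the adjoint module $\mathfrak{s}$ (namely the roots in $\Phi(\mathfrak{s})$, plus the zero weight of multiplicity $r:=\mathrm{rank}\,\mathfrak{s}$), Klimyk's formula yields the decomposition
$$\mathfrak{s}\otimes L^0_\lambda\;\cong\;r\,L^0_\lambda\;\oplus\;\bigoplus_{\beta\in\Phi(\mathfrak{s})}L^0_{\lambda+\beta},$$
from which I read off $[\mathfrak{s}\otimes L^0_\lambda:L^0_\lambda]=r$ and $[\mathfrak{s}\otimes L^0_\lambda:L^0_{\lambda+\beta}]=1$ whenever $\lambda+\beta\neq 0$.

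Plugging into Lemma~\ref{LemDelta} with $b=a-1$ and $\mu=\lambda$ gives
$$[\Delta(\lambda+a\delta):L(\lambda+(a-1)\delta)]\;=\;-1+r\;\geq\;1$$
since $r\geq 2$; hence $\lambda+a\delta\sim\lambda+(a-1)\delta$, and iterating (together with symmetry of $\sim$) establishes $\lambda+a\delta\sim\lambda+b\delta$ for all $a,b\in\mZ$. For $\mu=\lambda+\beta$ (nonzero by the hypothesis, which prevents $\lambda$ from being a root) and $b=a-1$, the same lemma gives
$$[\Delta(\lambda+a\delta):L(\lambda+\beta+(a-1)\delta)]\;=\;0+1\;=\;1,$$
so $\lambda+a\delta\sim\lambda+\beta+(a-1)\delta$; combined with the first equivalence, this yields $\lambda+a\delta\sim\lambda+\beta+b\delta$ for all $a,b$.

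The principal difficulty is justifying the clean Klimyk decomposition: verifying that no Weyl-group corrections arise (which uses the full dominance hypothesis on $\lambda+\beta$, so that $\lambda+\nu+\rho$ stays strictly inside the dominant chamber for every weight $\nu$ of $\mathfrak{s}$) and that $\lambda+\beta\neq 0$ (which follows because dominance of $\lambda-\theta$ forces $\lambda$ to dominate $\theta$ in the root order, so $\lambda$ cannot equal $-\beta$ for a root $\beta$). Once these are in place, the rest is a direct multiplicity count, and the rank hypothesis $r\geq 2$ is invoked precisely to ensure strict positivity $r-1\geq 1$ in the key inequality.
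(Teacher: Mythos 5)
Your proof is correct and follows essentially the same route as the paper's: both hinge on the clean decomposition $\fs\otimes L^0_\lambda\cong (L^0_\lambda)^{\mathrm{rank}\,\fs}\oplus\bigoplus_{\beta}L^0_{\lambda+\beta}$ and then extract the multiplicities $[\Delta(\lambda+a\delta):L(\lambda+(a-1)\delta)]=\mathrm{rank}\,\fs-1$ and $[\Delta(\lambda+a\delta):L(\lambda+\beta+(a-1)\delta)]=1$ from Lemma~\ref{LemDelta}. The only difference is cosmetic: you cite Klimyk's formula (noting $\lambda+\nu+\rho$ stays strictly dominant so no Weyl reflections intervene), whereas the paper derives the same decomposition directly from the Weyl character formula via $W$-invariance of $\mathrm{ch}\,\fs$; you also spell out the verifications $\lambda\ne 0$ and $\lambda+\beta\ne 0$ (needed to be in the $\mu\ne 0$ branch of Lemma~\ref{LemDelta}), which the paper leaves implicit but which are indeed consequences of the hypothesis.
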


\begin{proof}
We claim that
\begin{align*}
L^0_\lambda\otimes\mf s\cong \bigoplus_{\beta\in\Phi(\mf s)}L^0_{\la+\beta} \oplus (L^0_\lambda)^{{\rm rank}\,\mf s}.
\end{align*}
 By Lemma~\ref{LemDelta}, we thus see that
$$[\Delta(\la+a\delta):L(\la+(a-1)\delta)]\,=\,[\fs\otimes L^0_\lambda:L^0_\lambda]-1\,=\, \mathrm{rank}\,\fs-1 \,\ge\, 1.$$
In particular $\lambda+a\delta\sim \lambda+(a-1)\delta$ and the first relation in the lemma follows by iteration. Similary, we find
$$[\Delta(\la+(b+1)\delta):L(\la+\beta+b\delta)]\,=\,[\fs\otimes L^0_\lambda:L^0_{\lambda+\beta}]\,=\, 1.$$
Hence $\lambda+\beta+b\delta$ is linked to $\lambda+(b+1)\delta$ and composition with the first relation yields the second relation in the lemma.

For completeness we prove the above claim. By the Weyl character formula we have
\begin{align*}
{\rm ch} \left(L^0_\lambda\otimes\mf s\right) = \frac{\sum_{w\in W}(-1)^{\ell(w)} w(e^{\la+\rho_0})}{\prod_{\alpha\in\Phi^+(\mf s)}(1-e^\alpha)}\times \left(\sum_{\beta\in\Phi(\mf s)}e^\beta + {\rm rank}\,\mf s\cdot 1\right).
\end{align*}
Since ${\rm ch}\mf s$ is $W$-invariant, we deduce that
\begin{align*}
{\rm ch} \left(L^0_\lambda\otimes\mf s\right) =& \sum_{\beta\in\Phi(\mf s)}\frac{\sum_{w\in W}(-1)^{\ell(w)} w( e^{\la+\beta+\rho_0})}{\prod_{\alpha\in\Phi^+(\mf s)}(1-e^\alpha)}  + {\rm rank}\,\mf s \cdot\frac{\sum_{w\in W}(-1)^{\ell(w)} w(e^{\la+\rho_0})}{\prod_{\alpha\in\Phi^+(\mf s)}(1-e^\alpha)}\\
&=\sum_{\beta\in\Phi(\mf s)}{\rm ch} L^0_{\la+\beta} + {\rm rank}\,\mf s\cdot{\rm ch}L^0_\lambda.
\end{align*}
This completes the proof.
\end{proof}

\begin{lemma}\label{lem:diffab}
Assume that ${\rm rank}\,\mf s\ge 2$ and $\la+a\delta\in\Lambda^+$. Then for any $b\in\mZ$ we have $$\la+a\delta\sim\la+b\delta\sim \la+2\rho_0+b\delta.$$
\end{lemma}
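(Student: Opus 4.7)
The plan is to combine two explicit multiplicity identities for Verma-type modules with Lemma~\ref{lem:tensor} applied to the shifted weight $\lambda+2\rho_0$, and then chain the resulting linkages.

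First I would use Lemma~\ref{LemDelta} to establish
\[ [\Delta(\lambda+a\delta):L(\lambda+2\rho_0+(a-|\Phi^+|)\delta)]\;=\;1. \]
The alternating sum defining this multiplicity is $\sum_{i=0}^{|\Phi^+|}(-1)^{|\Phi^+|-i}[\wedge^i\fs\otimes L^0_\lambda:L^0_{\lambda+2\rho_0}]$. For $i<|\Phi^+|$, the highest weight of $\wedge^i\fs$ is strictly less than $2\rho_0$ in the dominance order, so $\lambda+2\rho_0$ is not even a weight of $\wedge^i\fs\otimes L^0_\lambda$ and the term vanishes. At $i=|\Phi^+|$, the weight $\lambda+2\rho_0$ is the unique maximum of $\wedge^{|\Phi^+|}\fs\otimes L^0_\lambda$, realised by $\bigl(\bigwedge_{\alpha\in\Phi^+}X_\alpha\bigr)\otimes v_\lambda$, so the summand equals $1$. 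This yields the linkage $\lambda+a\delta\sim\lambda+2\rho_0+(a-|\Phi^+|)\delta$.

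Second, by a Kostant-multiplicity computation I would establish the companion identity $[\Delta(\lambda+2\rho_0+a\delta):L(\lambda+(a-|\Phi^+|)\delta)]=1$. Expanding $[\wedge^i\fs\otimes L^0_{\lambda+2\rho_0}:L^0_\lambda]=\sum_{w\in W}(-1)^{\ell(w)}\dim(\wedge^i\fs)_{w(\lambda+\rho_0)-\lambda-3\rho_0}$, one checks that for $w\ne e$ the vector $w(\lambda+\rho_0)-\lambda-3\rho_0$ is strictly below $-2\rho_0$ in the dominance order, while a direct bookkeeping argument on sums of distinct weight vectors of $\fs$ shows that no weight of $\wedge^i\fs$ with $i\leq|\Phi^+|$ lies strictly below $-2\rho_0$ (the minimum $-2\rho_0$ is attained uniquely at $i=|\Phi^+|$ by taking all negative roots). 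Hence only $(w,i)=(e,|\Phi^+|)$ survives and contributes $1$. This gives $\lambda+2\rho_0+c\delta\sim\lambda+(c-|\Phi^+|)\delta$ for every $c\in\mZ$.

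Finally, I invoke Lemma~\ref{lem:tensor} for the weight $\lambda+2\rho_0$: under the assumption $\mathrm{rank}\,\fs\ge 2$, the genericity hypothesis holds outside $G_2$ because $\langle\lambda+2\rho_0+\beta,\alpha_i^\vee\rangle\geq 2+\min_\beta\langle\beta,\alpha_i^\vee\rangle\geq 0$. This produces $\lambda+2\rho_0+c\delta\sim\lambda+2\rho_0+c'\delta$ for all $c,c'$; chaining with the two identities above delivers $\lambda+a\delta\sim\lambda+2\rho_0+b\delta\sim\lambda+b\delta$ for arbitrary $b\in\mZ$. The hard part will be the weight-space analysis underlying the Kostant computation in the second step, along with the $G_2$ case, where $\langle\beta,\alpha_1^\vee\rangle$ can equal $-3$ so that the genericity of $\lambda+2\rho_0$ may fail; this is bypassed by applying Lemma~\ref{lem:tensor} to $\lambda+N\rho_0$ for $N$ large enough (e.g.\ $N=4$) and iterating the two composition identities above to descend back to $2\rho_0$-shifts.
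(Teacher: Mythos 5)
Your proof is essentially correct and rests on the same central identity the paper uses, namely $[\wedge^{\ell}\fs\otimes L^0_\lambda:L^0_{\la+2\rho_0}]=1$ with $\ell=|\Phi^+(\fs)|$ and the vanishing of the lower terms, giving $[\Delta(\la+a\delta):L(\la+2\rho_0+(a-\ell)\delta)]=1$ and hence $\la+a\delta\sim\la+2\rho_0+(a-\ell)\delta$. Where you diverge is in the bookkeeping afterward, and the differences are detours rather than errors. Your second step, the ``companion identity'' $[\Delta(\la+2\rho_0+a\delta):L(\la+(a-\ell)\delta)]=1$, is superfluous: linkage $\sim$ is a symmetric relation (two weights are linked iff their simples share a block), so the first identity already gives $\la+2\rho_0+(a-\ell)\delta\sim\la+a\delta$ for free and there is no need to descend via a second multiplicity computation. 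The computation is in fact true, but note it is phrased via the alternating Kostant sum that Lemma~\ref{LemDelta} prescribes when the \emph{target} weight is nonzero; if $\la=0$ the lemma uses the single-term formula $[\wedge^{\ell}\fs\otimes L^0_{2\rho_0}:\mC]$ instead, so the derivation as written needs a small case distinction. Finally, you try to apply Lemma~\ref{lem:tensor} directly at $\la+2\rho_0$, which you correctly observe can fail in type $G_2$ (where $\langle\beta,\alpha_i^\vee\rangle$ can be $-3$), and you patch this by passing to $\la+N\rho_0$. The paper avoids the case distinction entirely by iterating the first identity to get $\la+a\delta\sim\la+2k\rho_0+(a-k\ell)\delta$ for all $k\ge0$ in one stroke, and then choosing $k$ large enough (uniformly, with no type-by-type analysis) so that $\la+2k\rho_0$ satisfies the hypotheses of Lemma~\ref{lem:tensor}; composing this with Lemma~\ref{lem:tensor} and the symmetry of $\sim$ already yields every relation claimed. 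So your argument works, but the companion identity and the $G_2$ workaround both disappear if you iterate the first relation and use symmetry, which is precisely the paper's route.
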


\begin{proof}
We set $\ell:=|\Phi^+(\fs)|$. We have
$$[\wedge^{\ell}\fs\otimes L^0_\lambda:L^0_{\la+2\rho_0}]=1\quad\mbox{and}\quad [\wedge^{i}\fs\otimes L^0_\lambda:L^0_{\la+2\rho_0}]=0,\quad\mbox{for $i<\ell$}.$$
Lemma~\ref{LemDelta} thus implies that
$$[\Delta(\la+a\delta):L(\la+2\rho_0+(a-\ell)\delta)]=1.$$
By iteration, this means that
$$\la+a\delta\sim \lambda+2k\rho_0+(a-k\ell)\delta,$$
for every $k\ge 0$.

For every $\la$, we can choose $k\ge 0$ such that $\lambda+2k\rho_0$ is a weight satisfying the conditions in Lemma~\ref{lem:tensor}.
Consequently, every relation in the lemma can be obtained by composing the above relation with the one in Lemma~\ref{lem:tensor}.
\end{proof}

Recall that a finite-dimensional irreducible $\mf s$-module is called {\em minuscule}, if every weight is $W$-conjugate to the highest weight. By $W$-invariance, every weight space is therefore one-dimensional. We will also say that an $\fh^{\fs}$-weight is minuscule if it is the highest weight of a minuscule representation.

\begin{lemma}\label{lem:minuscule}
Let $\la$ be a dominant integral $\fs$-weight. If $\la$ is not minuscule, then there exist $k_\alpha\in\mN$, $\alpha\in\Phi^+(\mf s)$, such that $\la-\sum_{\alpha>0}k_\alpha\alpha$ is dominant and not all $k_\alpha=0$.
\end{lemma}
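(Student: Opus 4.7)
The plan is to reduce the lemma to the classical characterization of minuscule dominant weights: a dominant integral weight of $\fs$ is minuscule if and only if it is minimal among dominant integral weights in the dominance order (where $\mu \preceq \lambda$ means $\lambda - \mu$ is a non-negative integer combination of positive roots). The lemma is essentially the non-trivial direction of that equivalence, and the work amounts to producing a strictly smaller dominant weight than $\lambda$ inside the set of weights of $L^0_\lambda$.

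Concretely, I would start from the definition of minuscule recalled immediately before the statement: since $L^0_\lambda$ is not minuscule, there exists a weight $\mu$ of $L^0_\lambda$ which is \emph{not} contained in the Weyl-group orbit $W\lambda$. Let $\mu'$ denote the unique dominant representative of $W\mu$. Then $\mu' \neq \lambda$, for otherwise $\mu \in W\mu' = W\lambda$, contradicting the choice of $\mu$.

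Next I would invoke two standard facts about weights of a finite-dimensional simple $\fs$-module of highest weight $\lambda$: the weight set is $W$-stable, and every weight lies below $\lambda$ in dominance order, i.e.\ is of the form $\lambda - \sum_{\alpha \in \Phi^+(\fs)} k_\alpha \alpha$ with $k_\alpha \in \mN$. Applying these to $\mu' = w\mu$ for suitable $w \in W$, one gets
$$\lambda - \mu' \;=\; \sum_{\alpha \in \Phi^+(\fs)} k_\alpha \alpha, \qquad k_\alpha \in \mN,$$
and not all $k_\alpha$ vanish because $\mu' \neq \lambda$. Thus $\mu'$ is the dominant weight the lemma asks for.

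The argument is essentially mechanical, and no serious obstacle is expected. The only ingredients beyond elementary representation theory of $\fs$ are the two standard facts above, both well known and independent of case analysis. Unlike the earlier combinatorial work on Borel subalgebras, this step can be dispatched in a few lines.
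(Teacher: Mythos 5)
Your proposal is correct and follows essentially the same route as the paper's proof: pick a weight $\mu$ of $L^0_\lambda$ outside $W\lambda$, pass to its dominant $W$-conjugate (which is again a weight of $L^0_\lambda$ and differs from $\lambda$), and conclude from the fact that all weights of $L^0_\lambda$ lie below $\lambda$ in the dominance order. No gaps; the argument matches the paper's in structure and level of detail.
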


\begin{proof}
Since $\la$ is not minuscule, there exists a weight $\mu$ in $L^0_\lambda$ such that $\mu$ is not in the $W$-orbit of $\la$. Now there exists $w\in W$ such that $w\mu$ is dominant and $w\mu\not=\la$. But $w\mu$ is also a weight of $L^0_\lambda$ and hence it must be of the desired form $\la-\sum_{\alpha>0}k_\alpha\alpha$, and not all $k_\alpha=0$.
\end{proof}

\begin{lemma}\label{LemMin2}
Assume that ${\rm rank}\,\mf s\ge 2$. Suppose that $\la$ and $\la-\sum_{\alpha>0}k_\alpha\alpha$ with $k_\alpha\in\mN$ are both dominant integral weights. Then for all $a,b\in\mZ$ we have
\begin{align*}
\la+a\delta\sim\la-\sum_{\alpha>0}k_\alpha\alpha +b\delta.
\end{align*}
\end{lemma}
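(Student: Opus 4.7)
Write $\nu := \lambda - \sum_{\alpha > 0} k_\alpha \alpha$, which is dominant by hypothesis. The plan is to connect $\lambda + a\delta$ and $\nu + b\delta$ by a chain of $\cF$-links, where each step either shifts by $2\rho_0$ (using Lemma \ref{lem:diffab}) or subtracts a single positive root (using Lemma \ref{lem:tensor}). Fix an enumeration of the positive roots with multiplicity so that $\lambda - \nu = \gamma_1 + \cdots + \gamma_N$, and set $\mu_i := \lambda - (\gamma_1 + \cdots + \gamma_i)$, so $\mu_0 = \lambda$ and $\mu_N = \nu$. The intermediate $\mu_i$ need not themselves be dominant, so I first shift everything by a large multiple of $2\rho_0$ to move deep inside the dominant chamber.

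Iterating Lemma \ref{lem:diffab} on the successively dominant weights $\lambda, \lambda + 2\rho_0, \lambda + 4\rho_0, \ldots$ yields $\lambda + a\delta \sim \lambda + 2k\rho_0 + c\delta$ for every $k \in \mN$ and $c \in \mZ$, and the same reasoning applied to $\nu$ gives $\nu + 2k\rho_0 + c\delta \sim \nu + b\delta$. Because $\langle \rho_0, \check{\alpha}\rangle = 1$ for every simple coroot $\check{\alpha}$, the pairings $\langle \mu_i + 2k\rho_0 + \gamma, \check{\alpha}\rangle = 2k + \langle \mu_i + \gamma, \check{\alpha}\rangle$ grow linearly in $k$, while the correction terms range over a finite set of indices $(i, \gamma, \alpha)$ with $0 \le i \le N$, $\gamma \in \Phi(\fs) \sqcup \{0\}$, and $\alpha$ simple. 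Hence $k$ may be chosen large enough that $\mu_i + 2k\rho_0$ and each $\mu_i + 2k\rho_0 + \gamma$ are simultaneously dominant for all $0 \le i \le N$ and all $\gamma \in \Phi(\fs)$.

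With such $k$, Lemma \ref{lem:tensor} applied to the weight $\mu_i + 2k\rho_0$ with root $\beta = -\gamma_{i+1}$ yields $\mu_i + 2k\rho_0 + x\delta \sim \mu_{i+1} + 2k\rho_0 + y\delta$ for any $x, y \in \mZ$ and $0 \le i < N$. Composing these $N$ links produces $\lambda + 2k\rho_0 + c\delta \sim \nu + 2k\rho_0 + c'\delta$; concatenating with the two $2\rho_0$-shift relations of the previous paragraph then gives $\lambda + a\delta \sim \nu + b\delta$, as desired. The only delicate point is the selection of a single value of $k$ that simultaneously enforces dominance for every intermediate weight together with each of its root-translates, but this is automatic from the strict dominance of $\rho_0$ and the finiteness of the relevant index set; no serious technical obstacle is anticipated.
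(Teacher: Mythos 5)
Your proof is correct and uses essentially the same machinery as the paper's: the same two lemmas (Lemma~\ref{lem:tensor} and Lemma~\ref{lem:diffab}) and the same idea of shifting by a large multiple of $2\rho_0$ to stay deep in the dominant chamber while subtracting one positive root at a time. The paper organises this as an induction on $\sum_\alpha k_\alpha$, whereas you pick a single uniformly large shift $k$ and chain all $N$ steps directly; this is a presentational difference only.
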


\begin{proof}
We prove this by induction on $k:=\sum_\alpha k_\alpha$. The case $k=0$ is included in Lemma~\ref{lem:diffab}. Assume the lemma has been proved for $k-1$, assume $k_\beta>0$ for some $\beta\in \Phi^+$ and set $k_\alpha'= k_\alpha-\delta_{\alpha\beta}$.



We can find a $j\in\mN$ sufficiently large such that $\la+2j\rho_0-\sum_{\alpha}k'_\alpha\alpha+\gamma$ is dominant for all $\gamma\in\Phi(\fs)$. By Lemma~\ref{lem:tensor} we find
\begin{align}\label{eq:aux11}
\la+2j\rho_0-\sum_{\alpha}k'_\alpha\alpha +a\delta\sim \la+2j\rho_0-\sum_{\alpha}k_\alpha\alpha+b\delta.
\end{align}
By the induction hypothesis applied to $\la+2j\rho_0$ and $k'=\sum_{\alpha}k'_\alpha$, the weight on the left hand side of \eqref{eq:aux11} is linked to $\la+2j\rho_0+a\delta$. The latter weight is linked to $\la+a\delta$ by Lemma~\ref{lem:diffab}. The weight on the right hand side in \eqref{eq:aux11} is linked to $\la-\sum_{\alpha}k_\alpha\alpha+b\delta,$ again by Lemma~\ref{lem:diffab}. This concludes the proof.
\end{proof}


\begin{theorem}\label{thm:class:fd}
Assume that ${\rm rank}\,\mf s\ge 2$. The blocks in $\mathcal F$ are parameterised by the minuscule $\mf s$-weights. Indeed, let $\{\nu_i|1\le i\le r\}$ be set of minuscule weights of $\fs$. Then the highest weights of the simple $\fg$-modules in $\cF$ in the block $\cF_{\nu_i}$ containing {$L(\nu_i)$} are precisely the dominant weights of the form $\nu_i+\sum_{\alpha}k_\alpha\alpha+a\delta$, $k_\alpha\in\mN $ and $a\in\mZ$.
\end{theorem}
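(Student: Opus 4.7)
The strategy is to combine the linkage results of Lemmas~\ref{LemMin2} and~\ref{lem:minuscule} with a root-lattice argument to separate cosets. The plan breaks into two halves: one shows that every dominant weight is linked to some minuscule weight, and the other shows that distinct minuscule weights lie in distinct blocks.

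\emph{Reduction to a minuscule weight.} Fix $\lambda+a\delta\in\Lambda^+$. If $\lambda$ is not minuscule, Lemma~\ref{lem:minuscule} supplies $k_\alpha\in\mN$, not all zero, such that $\lambda':=\lambda-\sum_{\alpha>0}k_\alpha\alpha$ is dominant, and Lemma~\ref{LemMin2} gives $\lambda+a\delta\sim\lambda'+a\delta$. We iterate, observing that at each step the pairing $\langle\mu,2\rho_0^\vee\rangle$ (with $\rho_0^\vee$ the half-sum of positive coroots of $\fs$) strictly decreases by a positive integer among dominant weights. Since this pairing is a non-negative integer on dominant integral weights, the process terminates, and the only way it can terminate is at a dominant weight to which Lemma~\ref{lem:minuscule} no longer applies, i.e., at a minuscule weight $\nu$. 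Thus $\lambda+a\delta$ is $\cF$-linked to $\nu+a\delta$. Note that $\nu\in\lambda+Q$, where $Q$ denotes the root lattice of $\fs$, since we only subtracted positive roots.

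\emph{Separation of blocks by root-lattice cosets.} Every root of $\fg$ lies in $Q+\mZ\delta\subset\fh^\ast$ by~\eqref{EqRoots}. Hence, for any $\fg$-module $M$, the span of those $\fh$-weight spaces whose weights lie in a fixed coset of $Q+\mZ\delta$ is a $\fg$-submodule. Consequently, any indecomposable object of $\cF$ (or of $\cO$) has its weights concentrated in a single coset of $Q+\mZ\delta$. In particular, if $L(\lambda_1+a_1\delta)$ and $L(\lambda_2+a_2\delta)$ are $\cF$-linked, then $\lambda_1-\lambda_2\in Q$. Since the minuscule weights $\{\nu_i\}$ form a complete set of coset representatives for $P/Q$, with $P$ the integral weight lattice of $\fs$, distinct minuscule weights lie in distinct $Q$-cosets and therefore in distinct blocks.

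\emph{Conclusion.} Combining the two halves, each block of $\cF$ contains a unique minuscule weight $\nu_i$, and the simples in $\cF_{\nu_i}$ are precisely the $L(\mu+a\delta)$ with $\mu\in\Lambda^+$ and $\mu-\nu_i\in Q$. Because $\nu_i$ is the unique minimal dominant weight in its $Q$-coset, the condition $\mu-\nu_i\in Q$ with $\mu$ dominant is equivalent to $\mu-\nu_i\in Q^+=\sum_{\alpha>0}\mN\alpha$, yielding the parametrisation $\nu_i+\sum_\alpha k_\alpha\alpha+a\delta$ stated in the theorem.

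The main obstacle is verifying the termination of the iterative reduction in the first half, which relies on the fact that $\nu_i$ is the unique minimal dominant element of its $Q$-coset; this is the standard characterisation of minuscule weights and is essentially folklore, but it should be invoked explicitly. The rest is routine once Lemmas~\ref{LemMin2} and~\ref{lem:minuscule} are in hand.
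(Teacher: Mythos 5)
Your proof is correct and follows essentially the same two-part strategy as the paper: an upper bound (every dominant weight is linked to a minuscule one via Lemmas~\ref{lem:minuscule} and~\ref{LemMin2}) and a lower bound (linked weights lie in the same root-lattice coset, so distinct minuscule weights give distinct blocks). You spell out the termination of the iteration and the characterisation of minuscule weights as unique minimal dominant weights in their coset, both of which the paper leaves implicit, but these are elaborations rather than a different route.
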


\begin{proof}
Clearly, if two weights $\la+a\delta$ and $\mu+b\delta$ in $\Lambda$ are linked, then $\la$ and $\mu$ differ by an element in the root lattice. So the number of blocks must be at least the index of the root lattice in the integral weight lattice.

On the other hand, if $\la+a\delta\in\Lambda^+$, then $\la+a\delta\sim\nu+a\delta$, for some minuscule highest weight $\nu$ by Lemmas \ref{lem:minuscule} and \ref{LemMin2}. Now, the number of minuscule representations of $\mf s$ is well known to be equal to the index of the root lattice in the integral weight lattice.
\end{proof}

\begin{corollary}\label{cor:blocks:O}
Assume that ${\rm rank}\,\mf s\ge 2$. The blocks in $\mathcal O$ are also parameterised by the minuscule weights of $\mf s$. Indeed, let $\{\nu_i|1\le i\le r\}$ be set of minuscule weights of $\fs$. Then the highest weights of the simple $\fg$-modules in $\cO$ in the block $\cO_{\nu_i}$ containing {$L(\nu_i)$} are precisely the weights in $\nu_i+\mZ\Phi(\fg)$.
\end{corollary}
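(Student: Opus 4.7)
The plan is to match a lower bound of $r$ many $\cO$-blocks, obtained from a coset-counting argument, with an upper bound that every simple in $\cO$ is $\cO$-linked to one with dominant highest $\fs$-weight; the latter then reduces to Theorem \ref{thm:class:fd}. For the lower bound I will use that any indecomposable $\fg$-module has all its $\fh$-weights in a single coset of the root lattice $\mZ\Phi(\fg)$, since otherwise its weight decomposition splits into $\fg$-stable summands. Consequently two simples are $\cO$-linked only if their highest weights lie in a common coset of $\mZ\Phi(\fg)=\mZ\Phi(\fs)+\mZ\delta$, so the number of blocks is at least $|\Lambda/\mZ\Phi(\fg)|$, which equals the index of the root lattice in the integral weight lattice of $\fs$, namely $r$.

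For the upper bound, since $\cF$-linkage implies $\cO$-linkage, it suffices to show that for every integral $\fs$-weight $\lambda$ and every $c\in\mZ$, the simple module $L(\lambda+c\delta)$ is $\cO$-linked to $L(\mu+c\delta)$ for some $\fs$-dominant $\mu$ in the dot orbit $W\cdot\lambda$. I plan to establish this by proving that $L(\lambda+c\delta)$ is a $\fg$-composition factor of the indecomposable Verma module $\Delta'(\mu+c\delta)$. By PBW, the $\xi\partial_\xi$-eigenspace of eigenvalue $-c$ in $\Delta'(\mu+c\delta)$ equals $U(\mf n^\fs_-)\otimes v_{\mu+c\delta}$, which as an $\fs$-module is the $\fs$-Verma module $\Delta^0_\fs(\mu)$. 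Any $\fg$-composition factor $L(\tau+d\delta)$ of $\Delta'(\mu+c\delta)$ must satisfy $d\le c$, with its $\delta=c$ part being zero when $d<c$ and equal to $L^0_\fs(\tau)$ when $d=c$. Equating $\fs$-composition multiplicities at $\delta=c$ then yields $[\Delta^0_\fs(\mu)]=\sum_\tau[L^0_\fs(\tau)]$, summed over the $\fg$-composition factors $L(\tau+c\delta)$ of $\Delta'(\mu+c\delta)$. Since the classical BGG theorem provides $L^0_\fs(\lambda)$ as a composition factor of $\Delta^0_\fs(\mu)$, we conclude that $L(\lambda+c\delta)$ appears as a $\fg$-composition factor of $\Delta'(\mu+c\delta)$ and is therefore $\cO$-linked to $L(\mu+c\delta)$.

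The main technical hurdle will be the identification of the $\delta=c$ subspace of the simple module $L(\tau+c\delta)$ with $L^0_\fs(\tau)$ as $\fs$-module. The argument hinges on the fact that $\partial_\xi$, the positive odd root vector in $\fb$, annihilates the top $\delta$-level $\Delta^0_\fs(\tau)$ of the Verma $\Delta'(\tau+c\delta)$, whereas $U(\fg_{-1})$ strictly decreases $\delta$-weight. Hence the $U(\fg)$-submodule generated by the maximal proper $\fs$-submodule of $\Delta^0_\fs(\tau)$ lies entirely at $\delta\le c$ and does not contain the highest weight vector, while conversely the $\delta=c$ part of the maximal proper $\fg$-submodule of $\Delta'(\tau+c\delta)$ is automatically a proper $\fs$-submodule of $\Delta^0_\fs(\tau)$. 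The two must therefore coincide by maximality, and the required identification follows. Once this is in place the PBW computation and invocation of BGG linkage finish the argument.
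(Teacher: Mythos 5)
Your lower-bound argument (weights of an indecomposable $\cO$-module lie in a single coset of $\mZ\Phi(\fg)$) is sound and matches the paper. Your technical lemma identifying the top $\delta$-level of $L(\tau+c\delta)$ with $L^0_\fs(\tau)$ is correct and a nice observation: comparing $\delta = c$ levels in the Grothendieck group does give, for each $\lambda$,
\[
[\Delta^0_\fs(\lambda)] \;=\; \sum_{\tau}\,[\Delta'(\lambda+c\delta):L(\tau+c\delta)]\,[L^0_\fs(\tau)],
\]
so all $\fs$-BGG linkage within the dot orbit of $\lambda$ is indeed inherited as $\cO$-linkage.

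However, there is a genuine gap in the reduction step. You claim that $L(\lambda+c\delta)$ is $\cO$-linked to $L(\mu+c\delta)$ for some \emph{$\fs$-dominant} $\mu\in W\cdot\lambda$, and then invoke Theorem~\ref{thm:class:fd}. But when $\lambda+\rho_0$ is singular — i.e., $\langle\lambda+\rho_0,\alpha^\vee\rangle = 0$ for some positive $\alpha$ — the dot orbit $W\cdot\lambda$ contains no $\fs$-dominant weight at all: its dot-dominant representative $\mu$ satisfies $\langle\mu,\alpha_i^\vee\rangle = -1$ on the relevant simple wall, so $L^0_\fs(\mu)$ is infinite-dimensional and $L(\mu+c\delta)\notin\cF$. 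The extreme case is $\lambda = -\rho_0$, whose dot orbit is a singleton. In these cases your argument terminates at a weight that is still outside $\cF$, and iterating your lemma does not escape the dot orbit, so it gives nothing new.

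The paper's proof circumvents this by working with a composition factor at a \emph{lower} $\delta$-level, namely
$[\Delta'(\lambda+a\delta):L(\lambda + 2\rho_0 + (a-\ell)\delta)] = 1$ with $\ell = |\Phi^+(\fs)|$ (a one-dimensional weight space count), and iterating this to reach $\lambda + 2k\rho_0$, which is $\fs$-dominant for $k\gg 0$ regardless of whether $\lambda$ is singular. If you want to keep your approach, you could first apply the paper's $2\rho_0$-shift once to move into a regular orbit and then use your level-$c$ analysis, but as written your argument does not cover the singular case.
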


\begin{proof}
 If two weights $\la+a\delta$ and $\mu+b\delta$ are linked, then $\la-\mu$ must lie in the root lattice. Thus, by Theorem \ref{thm:class:fd} there are at least as many blocks in $\cO$ as there are in $\cF$.

Now, let $\la+a\delta\in\Lambda$ and $\ell=|\Phi^+(\fs)|$.  We note that
$$[\Delta'(\la+a\delta):L(\la+2\rho_0+(a-\ell)\delta)]= 1,$$ since $\dim\Delta'(\la+a\delta)_{\la+2\rho_0+(a-\ell)\delta}=1$ and there are no weights in $\Delta'(\la+a\delta)$ of the form $\la+2\rho_0+(a-\ell+1)\delta$ or $\la+2\rho_0+\alpha+(a-\ell)\delta$, $\alpha\in\Phi^+(\fs)$.
Iterating, we conclude that $\la+a\delta$ is linked to $\la+2k\rho_0+(a-k\ell)\delta$, for any $k\ge 0$. Since we can choose $k\gg 0$ so that $\la+2k\rho_0$ is dominant, we conclude that every block in $\cO$ contains a simple module in $\cF$. The corollary now follows.
\end{proof}

The following proposition describes the blocks in the case ${\rm rank}\,\fs=1$, i.e., $\fs\cong \mathfrak{sl}(2)$. Let $\omega$ denote the fundamental weight and write $[n]=n\omega$, for $n\in\mZ$.

\begin{prop}\label{prop:block:sl2}
Let $\mf s\cong \mathfrak{sl}(2)$. Then the number of blocks in $\mathcal F$ and $\mathcal O$ equals $3$. Indeed, the highest weights of the simple $\fg$-modules in $\cF$ in these three blocks are:
\begin{align}\label{blocks:sl2}
\begin{split}
&\{[2n]+a\delta|n\in\mN ,a\in\mZ\},\\
&\{[1+2n]+(2a-n)\delta|n\in\mN ,a\in\mZ\},\\
&\{[1+2n]+(2a-n-1)\delta|n\in\mN ,a\in\mZ\}.
\end{split}
\end{align}
Furthermore, the highest weights of the simple $\fg$-modules in $\cO$ in the three blocks are given by the exact same formula as in \eqref{blocks:sl2} with $n\in\mN $ replaced in there by $n\in\mZ$.
\end{prop}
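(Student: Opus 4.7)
The plan is to mirror the arguments used for $\mathrm{rank}\,\fs \ge 2$ but adapted to the rank-one setting, where the conclusion of Lemma~\ref{lem:tensor} fails. Concretely, I will carry out the composition-factor calculation of Lemma~\ref{LemDelta} explicitly for $\fs \cong \mathfrak{sl}(2)$, combine it with the $\nabla$-structure at the trivial weight supplied by Lemma~\ref{LemSimp}, extract a complete list of generating linkage relations on $\Lambda^+$, and verify that the resulting equivalence closure yields precisely the three classes in~\eqref{blocks:sl2}.

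\textbf{Category $\cF$.} For $\fs \cong \mathfrak{sl}(2)$ we have $\wedge^0\fs \cong \wedge^3\fs \cong \mC$, $\wedge^1\fs \cong \wedge^2\fs \cong \fs$, together with the Clebsch--Gordan rule $L^0_{[m]} \otimes \fs \cong L^0_{[m+2]} \oplus L^0_{[m]} \oplus L^0_{[m-2]}$ (with the last summand absent for $m<2$). Plugging these into Lemma~\ref{LemDelta} and evaluating the alternating sum, the only composition factors of $\Delta([m]+a\delta)$ for $m\ge 1$ turn out to be $L([m]+a\delta)$, $L([m]+(a-2)\delta)$, $L([m+2]+(a-1)\delta)$, and (if $m\ge 2$) $L([m-2]+(a-1)\delta)$; for $m=0$ they are $L(a\delta)$, $L((a-3)\delta)$, and $L([2]+(a-1)\delta)$. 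Together with the non-split short exact sequence of Lemma~\ref{LemSimp}, this produces the generating linkage relations
\begin{equation*}
[m]+a\delta \,\sim\, [m]+(a-2)\delta, \qquad [m+2]+a\delta \,\sim\, [m]+(a-1)\delta\ (m\ge 0), \qquad [0]+a\delta \,\sim\, [0]+(a-1)\delta.
\end{equation*}
I then introduce the invariant $\iota([m]+a\delta) := (m+2a) \pmod 4$ with the identification $0\equiv 2$. The first two relations preserve $(m+2a)\pmod 4$ exactly, while the third (which applies only when $m=0$, hence $m$ even) shifts it by $-2$; thus $\iota$ takes precisely three values on $\Lambda^+$, realised by $\iota=1$, $\iota=3$ and the even class $\{0,2\}$, matching the three sets in~\eqref{blocks:sl2}. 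Internal connectedness is then a chase: for odd $m$, iterating $[m+2]+a\delta \sim [m]+(a-1)\delta$ reduces $[1+2n]+a\delta$ to $[1]+(a-n)\delta$, after which $[1]+c\delta \sim [1]+(c-2)\delta$ controls the parity of the $\delta$-coefficient; for even $m$, an analogous reduction brings everything to $[0]+c\delta$, and $[0]+c\delta \sim [0]+(c-1)\delta$ removes the last parity constraint.

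\textbf{Category $\cO$.} The labelling set is now all of $\Lambda$, so I first reduce to dominant weights. The weight-space argument from the proof of Corollary~\ref{cor:blocks:O} (applied with $\rho_0=\omega$ and $\ell=1$) still gives $[\Delta'([m]+a\delta):L([m+2]+(a-1)\delta)]=1$ for every $m\in\mZ$, and iterating in $k$ links $[m]+a\delta$ to $[m+2k]+(a-k)\delta$, which is dominant for $k\gg 0$. Hence every $\cO$-block meets some $\cF$-block, and it suffices to rule out mergings. The only genuinely new linkage relation not already present in $\cF$ comes from the non-simplicity of the $\fs$-Verma $\Delta^0([m])$ for $m\ge 0$, which lifts through $\mathrm{ind}^\fg_{\fg_{\ge 0}}$ to yield $[m]+a\delta \sim [-m-2]+a\delta$ in $\cO$. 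A direct computation shows this shifts $m+2a$ by $-2m-2$, which is $\equiv 0\pmod 4$ for $m$ odd and $\equiv 2\pmod 4$ for $m$ even, so it respects $\iota$. Consequently the three $\cF$-blocks extend to three distinct $\cO$-blocks, as claimed.

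\textbf{Main obstacle.} The delicate point throughout is the role of the trivial $\fs$-weight $\lambda=0$: Lemma~\ref{LemSimp} forces $\nabla(a\delta)$ to be non-simple and thereby contributes the extra linkage $[0]+a\delta \sim [0]+(a-1)\delta$ which is absent for $\lambda\ne 0$. This is the precise mechanism that collapses the naive $(m+2a)\pmod 4$ invariant from four classes to three and explains why the two potential even blocks merge into one. All other verifications are mechanical once this subtlety is isolated.
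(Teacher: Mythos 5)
Your treatment of category $\cF$ is correct and follows essentially the same path as the paper: you compute the composition factors of $\Delta([m]+a\delta)$ via Lemma~\ref{LemDelta}, use Corollary~\ref{CorBGG}(1) to extract the linkage relations, and verify both the internal connectedness of each set in~\eqref{blocks:sl2} and their pairwise separation. Your explicit invariant $\iota([m]+a\delta)=(m+2a)\bmod 4$ with the identification $0\equiv 2$ is a nice way of making the paper's bare assertion ``the weights $a\delta$, $[1]+a\delta$ and $[1]+(a-1)\delta$ are not linked'' fully transparent.

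The category $\cO$ part, however, has a genuine gap. Your argument hinges on the claim that ``the only genuinely new linkage relation not already present in $\cF$ comes from the non-simplicity of the $\fs$-Verma $\Delta^0([m])$''. This is asserted but not proved, and it is precisely the delicate point. The linkage relation in $\cO$ is generated by composition factors of \emph{all} $\Delta'(\kappa)$ with $\kappa\in\Lambda$, including those $\kappa$ whose $\fh^\fs$-component is non-dominant, and for such $\kappa$ none of the modules $\Delta(\kappa)$ of the $\cF$ analysis even exist. To conclude that $\iota$ is constant on $\cO$-blocks, you must identify the composition factors of $\Delta'(\kappa)$ in this regime as well. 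The paper resolves this by computing $\mathrm{ch}\,\Delta'(\kappa)=\sum_{\kappa'}\mathrm{ch}\,\nabla'(\kappa')$ with $\kappa'\in\{\kappa,\,\kappa+[2]-\delta,\,\kappa-[2]-\delta,\,\kappa-2\delta\}$, then expanding each $\nabla'(\kappa')$ via~\eqref{sl2:aux1} into $L(\kappa')$ and (when $\kappa'$ is dominant) $L(s\cdot\kappa')$, and checking these all remain within the proposed sets; you would need some analogue of this computation before your $\iota$-invariance conclusion is licensed. The iteration step linking every weight to a dominant one, and the check that $\iota$ is respected by the $s$-dot relation, are both fine as stated; it is the exhaustiveness of your list of generating relations in $\cO$ that is unjustified.
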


\begin{proof}
We first consider the case of $\cF$.
By Lemma~\ref{LemDelta}, we have the following identity in the Grothendieck group $K_0(\cF)$ for $n\ge 2$:
\begin{align*}
&\Delta([n]+a\delta)=L([n]+a\delta)+L([n+2]+(a-1)\delta)+L([n-2]+(a-1)\delta)+L([n]+(a-2)\delta),\\
&\Delta([1]+a\delta)=L([1]+a\delta)+L([3]+(a-1)\delta)+L([1]+(a-2)\delta),\\
&\Delta(a\delta)=L(a\delta)+L([2]+(a-1)\delta)+L((a-3)\delta).
\end{align*}
Now we can use Corollary \ref{CorBGG}(1) to prove the following:
\begin{align*}
&a\delta\sim[2n]+b\delta,\quad\forall n\in\mN ; a,b\in\mZ,\\
&[1]+a\delta\sim[1]+(a-2)\delta,\quad\forall a\in\mZ,\\
&[1]+a\delta\sim[1+2n]+(a-n)\delta,\quad\forall n\in\mN ;a\in\mZ.
\end{align*}
Furthermore, the weights $a\delta$, $[1]+a\delta$ and $[1]+(a-1)\delta$ are not linked. 
This proves the proposition for $\mathcal F$.

Now, we turn to the category $\mathcal O$ case. We want to prove that the linkage classes are given by the three sets in \eqref{blocks:sl2} now with $n\in\mN $ in there replaced by $n\in\mZ$.

Since the $\fg$-Verma module $\Delta'([2n]+a\delta)$ contains a composition factor $L([-2n-2]+a\delta)$, for $n\ge 0$, it follows from the linkage in $\cF$ that the set of weights $\{[2n]+a\delta|n,a\in\mZ\}$ forms one linkage class.

Next, consider the subcategory of $\mathcal O$ generated by simple objects with highest weights of the form $[1+2n]+a\delta$, for $n\in\mZ$ and $a\in\mZ$. Let $s\in W$ be the generator of the Weyl group of $\fs$. We compute
\begin{align*}
&s\cdot[1+2n]+(2m-n-\kappa)\delta=[1+2(-2-n)]+\left(2(m-n-1)+2+n-\kappa\right)\delta,\quad \kappa=0,1.
\end{align*}
This implies that each of the two sets of weights $\{[1+2n]+(2m-n)\delta|m,n\in\mZ\}$ and $\{[1+2n]+(2m-n-1)\delta|m,n\in\mZ\}$ are invariant under the dot action of the Weyl group.
Also, for these weights we have
\begin{align}\label{sl2:aux1}
\begin{split}&{\rm ch}\nabla'(\kappa)={\rm ch}L(\kappa)+{\rm ch}L(s\cdot\kappa), \quad\text{ if }\kappa\text{ is dominant},\\
&\nabla'(\kappa)=L(\kappa), \qquad\qquad\qquad\qquad\text{ otherwise}.
\end{split}
\end{align}
By \eqref{BGG:O}, we have therefore
\begin{align}\label{sl2:aux2}
\begin{split}
&P'(\kappa)=\Delta'(\kappa),\qquad\qquad\qquad\qquad\text{ if }\kappa\text{ is dominant},\\
&{\rm ch}P'(\kappa)={\rm ch}\Delta'(\kappa)+{\rm ch}\Delta'(s\cdot\kappa),\quad\text{otherwise.}
\end{split}
\end{align}
We compute the character of the Verma module:
\begin{align*}
{\rm ch}\Delta'(\kappa)={\rm ch}\nabla'(\kappa)+{\rm ch}\nabla'(\kappa+[2]-\delta)+{\rm ch}\nabla'(\kappa-[2]-\delta)+ {\rm ch}\nabla'(\kappa-2\delta).
\end{align*}
Thus, by \eqref{sl2:aux1} and \eqref{sl2:aux2}, we conclude that the highest weights of the composition factors of a Verma module with highest weight belonging to one of these two sets will all belong to the same set again, which implies that the same is true for a projective cover as well. This implies that this subcategory splits into two blocks with simple objects having highest weights of the forms described above.
\end{proof}

 Below we shall show that $U(\mathfrak g)$ has trivial center. We shall need the following.
\begin{prop}\label{prop:sergeev}\cite[Proposition~1.1]{Se2} Let $\mathfrak k$ be a finite-dimensional Lie superalgebra, $V$ a finite-dimensional $\mathfrak k$-module, $W\subseteq V$ a subspace, and $w_0\in W_{\oa}$ such that
the map $\mathfrak k\times W\rightarrow V$ given by
\begin{align*}
(X, w)\mapsto Xw_0+w, \quad X\in\mathfrak k,w\in W,
\end{align*}
is surjective. Then the restriction map $S(V^*)^{\mathfrak k}\rightarrow S(W^*)$ is injective.
\end{prop}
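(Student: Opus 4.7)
The plan is to use a slice argument: the surjectivity hypothesis makes $W$ an infinitesimal transversal slice at $w_0$ to the orbit of $\fk$ through $w_0$, and an invariant polynomial should be determined by its restriction to such a slice.

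First I would treat the purely even case ($\fk=\fk_{\oa}$, $V=V_{\oa}$). Let $K$ be the connected algebraic group with Lie algebra $\fk$, and let $f\in S(V^*)^{\fk}$ with $f|_W=0$. Then $f$ is $K$-invariant, so $f$ vanishes on the orbit $K\cdot W\subseteq V$. The differential at $(e,w_0)$ of the orbit map $K\times W\to V,\ (k,w)\mapsto kw,$ is precisely the linear map $(Y,u)\mapsto Yw_0+u$, which is surjective by hypothesis. Hence $K\cdot W$ contains a Zariski-dense open subset of $V$, and $f=0$ follows from the vanishing of a polynomial on a dense set.

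For the general super case, I would expand $f=\sum_K f_K\mu^K$ in a basis of $\wedge V_{\ob}^*$ adapted to a splitting $V_{\ob}=W_{\ob}\oplus U$, so that each generator $\mu_k\in U^*$ restricts to zero on $W_{\ob}$, and then induct on the Grassmann degree $|K|$. The base case $|K|=0$: $f_\emptyset|_{W_{\oa}}=0$, and $f_\emptyset$ is $\fk_{\oa}$-invariant modulo corrections coming from the action of $\fk_{\oa}$ on the $\mu_k$'s, which are themselves of higher Grassmann order. By the even case applied to $(\fk_{\oa},V_{\oa},W_{\oa},w_0)$---whose inherited surjectivity hypothesis follows by restricting $\phi$ to even parts---this gives $f_\emptyset=0$. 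The inductive step uses the odd invariance relations $Y\cdot f=0$ for $Y\in\fk_{\ob}$, iterated via the formal odd exponential $\exp(sY)=1+sY$ (valid since $(sY)^2=0$ for $s$ odd), to reduce the vanishing of each $f_K$ to the vanishing of components of lower Grassmann degree.

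The main obstacle is handling the super setting cleanly: $\fk_{\ob}$ does not integrate to a group, so the ``orbit'' portion of the slice argument must be replaced by formal exponential manipulations, and the $\fk_{\oa}$-action on $V_{\ob}$ does not in general preserve the splitting $V_{\ob}=W_{\ob}\oplus U$, which complicates the bookkeeping in the induction. The detailed super-slice argument is carried out in \cite{Se2}.
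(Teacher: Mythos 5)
The paper does not prove Proposition \ref{prop:sergeev}; it is stated as a quotation of \cite[Proposition~1.1]{Se2}, so there is no in-paper proof to compare against. Taking your sketch on its own terms: the even case is the standard slice argument and is correct in substance. One small caveat: an abstract finite-dimensional complex Lie algebra $\fk$ need not be the Lie algebra of an \emph{algebraic} group, so ``$K\cdot W$ contains a Zariski-dense open subset'' is not quite available in the form you state it. The fix is easy and does not change the argument: take $K$ to be a connected complex Lie group integrating $\fk$; the orbit map $K\times W\to V$ is holomorphic with surjective differential at $(e,w_0)$, so its image contains a Euclidean-open neighbourhood of $w_0$, and a polynomial vanishing on a Euclidean-open set vanishes identically. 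Since a polynomial killed by $\fk$ is constant along the flows $\exp(tY)$, $Y\in\fk$, it is $K$-invariant, so the conclusion $f=0$ goes through.

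The super part of your sketch identifies the right issues (non-integrability of $\fk_{\ob}$, and the fact that the $\fk$-action need not respect a chosen splitting $V_{\ob}=W_{\ob}\oplus U$), and the strategy of expanding in Grassmann degree and using the nilpotent exponentials $\exp(sY)=1+sY$ is the standard way to handle the odd directions, but as written the induction is not carried far enough to be checkable: you never actually verify that the inherited surjectivity for $(\fk_{\oa},V_{\oa},W_{\oa},w_0)$ holds (one needs that the even part of $\fk w_0 + W = V$ gives $\fk_{\oa}w_0 + W_{\oa} = V_{\oa}$, which does follow since $w_0$ is even and parities are preserved, but it should be said), nor do you exhibit the precise recursion that expresses each $f_K$ in terms of lower-order coefficients via the relations $Y\cdot f=0$. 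Deferring to \cite{Se2} for these details is reasonable for a sketch, but a self-contained proof would need to spell them out.
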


\begin{prop}
We have $\mathcal{Z}(\fg)\cong \mC\cong S(\fg)^{\fg}$, where $\mathcal{Z}(\fg)$ denotes the centre of $U(\fg)$ and $S(\fg)^{\fg}$ the algebra of $\fg$-invariants under the adjoint action on the symmetric algebra $S(\fg)$.
\end{prop}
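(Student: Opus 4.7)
The plan is to deduce $\mathcal{Z}(\fg)\cong\mC$ from $S(\fg)^{\fg}\cong\mC$ via the PBW filtration, and to establish the latter by a direct calculation using the $\mZ$-grading on $\fg$ induced by $\mathrm{ad}(-\xi\partial_\xi)$. Since the adjoint action of $\fg$ on $U(\fg)$ preserves the PBW filtration and induces the standard adjoint action on $\mathrm{gr}\,U(\fg)\cong S(\fg)$, we obtain $\mathrm{gr}\,\mathcal{Z}(\fg)=\mathrm{gr}\,U(\fg)^\fg\hookrightarrow S(\fg)^\fg$, so it suffices to prove $S(\fg)^\fg=\mC$.

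Fix a basis $\{X_i\}$ of $\fs$ and write $h=\xi\partial_\xi$, $X_i'=X_i\otimes\xi$, $\partial=\partial_\xi$. The $\mathrm{ad}\,h$-eigenvalues on $X_i,h,X_i',\partial$ are $0,0,+1,-1$ respectively, and $\partial^2=0$ in $S(\fg)=S(\fg_{\oa})\otimes\wedge(\fg_{\ob})$, so $h$-invariance of $f\in S(\fg)^\fg$ forces
$$f=A(X,h)+\sum_i B_i(X,h)\,X_i'\partial,\qquad A,B_i\in S(\fs)[h].$$
Computing $\partial\cdot f$ and $X_j'\cdot f$ as odd super-derivations (using $\partial\cdot h=\partial$, $\partial\cdot X_k'=X_k$, $X_j'\cdot X_k=[X_j,X_k]'$, $X_j'\cdot h=-X_j'$, $X_j'\cdot\partial=X_j$) and setting them to zero yields the two identities $\sum_i B_iX_i+\partial A/\partial h=0$ and $X_jB_l+\delta_{jl}\partial A/\partial h=\sum_k c_{jk}^l\,\partial A/\partial X_k$ (for all $j,l$), where $c_{jk}^l$ are the structure constants of $\fs$. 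Taking $l=j$, summing, and invoking $\sum_j c_{jk}^j=\mathrm{tr}(\mathrm{ad}\,X_k)=0$ for $\fs$ semisimple, the two identities combine to give $(\dim\fs-1)\partial A/\partial h=0$, so $A\in S(\fs)$ and $\sum_i B_iX_i=0$.

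The main obstacle is to extract from the residual equation $X_jB_l=\sum_k c_{jk}^l\,\partial A/\partial X_k$ in $S(\fs)$ that $B_l=0$ and $A$ is constant. To this end I would invoke invariance of the Killing form $\kappa$ on $\fs$: lowering the upper index makes $c_{jkl}:=\sum_m\kappa_{lm}c_{jk}^m$ totally antisymmetric, and setting $\tilde B_l=\sum_m\kappa_{lm}B_m$ the equation rewrites as $X_j\tilde B_l=\sum_k c_{jkl}\,\partial A/\partial X_k$. Swapping $j\leftrightarrow l$ gives $X_j\tilde B_l+X_l\tilde B_j=0$; specialising $j=l$ in the polynomial domain $S(\fs)$ forces $\tilde B_j=0$, hence $B_j=0$ for all $j$. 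Substituting back, $\sum_k c_{jk}^l\,\partial A/\partial X_k=0$ for all $j,l$; this is equivalent to the polynomial gradient $\nabla A\colon\fs^*\to\fs$ taking values pointwise in $\bigcap_j\ker(\mathrm{ad}\,X_j)=\mathfrak{z}(\fs)=0$, so $\nabla A\equiv 0$ and $A$ is constant, completing the proof. Proposition \ref{prop:sergeev} could serve as a preliminary reduction (with $V=\fg^*$, $\mathfrak{k}=\fg$, and a suitable even $w_0\in\fg_{\oa}^*$), but the present $\mZ$-graded calculation appears to be self-contained.
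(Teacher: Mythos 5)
Your proof is correct, and it takes a genuinely different and more elementary route than the paper. The paper's argument is indirect and representation-theoretic: it builds a chain of monomorphisms $S(\fg)^{\fg}\hookrightarrow S(\fg_{\leq 0})^{\fg_0}\hookrightarrow S(\fg_0)^{\fg_0}\hookrightarrow S(\fh)$ using Sergeev's criterion (Proposition \ref{prop:sergeev}) and the grading operator, transfers this to the Harish--Chandra homomorphism $h\colon \mathcal{Z}(\fg)\to S(\fh)$, and then invokes the block decomposition of $\cO$ (Corollary \ref{cor:blocks:O}, Proposition \ref{prop:block:sl2}) together with Zariski density of the root lattice to force $h(z)$ to be a scalar. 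By contrast, you work entirely inside $S(\fg)$: the $\mathrm{ad}(\xi\partial_\xi)$-grading reduces an invariant $f$ to the two-term Ansatz $f = A + \sum_i B_i X_i'\partial$, the $\partial$- and $X_j'$-invariances produce two families of polynomial identities, and the unimodularity of $\fs$ (trace of $\mathrm{ad}$ vanishes), the total antisymmetry of the lowered structure constants under the Killing form, and $\mathfrak{z}(\fs)=0$ conspire to force $B_l=0$ and $A\in\mC$; the passage to $\mathcal{Z}(\fg)$ then follows from the standard $\mathrm{gr}\,\mathcal{Z}(\fg)\hookrightarrow S(\fg)^\fg$ injection. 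What your approach buys is complete self-containedness: it does not rely on the block-decomposition results proved earlier in the paper, nor on the Harish--Chandra formalism. What the paper's approach buys is that it exhibits the triviality of the centre as a corollary of the block structure of $\cO$, which fits the narrative of the section. I verified the bracket computations $\partial\cdot h=\partial$, $\partial\cdot X_k'=X_k$, $X_j'\cdot X_k=[X_j,X_k]'$, $X_j'\cdot h=-X_j'$, $X_j'\cdot\partial=X_j$, the resulting identities, the contraction giving $(\dim\fs-1)\,\partial A/\partial h=0$, and the Killing-form symmetrization argument; all are correct. (One remark: $X_j'\cdot f=0$ also produces constraints from the $\wedge^3\fg_{\ob}$-component, namely $\sum_i(X_j'\cdot B_i)X_i'\partial=0$, which you did not write down. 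These are redundant given that you already conclude $B_i=0$, so omitting them is harmless, but it is worth noting that you are only using a subset of the invariance conditions.)
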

\begin{proof}
Consider the vector space decomposition
$$\fg\;=\; \fg_{-1}\oplus \fn^{\fs}_-\oplus \fh\oplus \fn^{\fs}\oplus\fg_1.$$
We choose an ordered basis of $\fg$ which is compatible with the above decomposition.
This yields a PBW basis of $U(\fg)$ such that every basis element of $U(\fg)$ is a monomial of basis elements in $\fg$, each one in one of the above subspaces, and from left to right in the monomial we first have elements of $\fg_{-1}$, then of $\fn^{\fs}_-$ etc. In particular this yields a commutative diagram of $\mC$-linear morphisms
$$\xymatrix{
U(\fg)\ar[r] &U(\fg_{\le 0})\ar[r]& U(\fg_0)\ar[r]& U(\fh)\\
S(\fg)\ar[u]^{\sim}\ar[r] &S(\fg_{\le 0})\ar[r]\ar[u]^{\sim}& S(\fg_0)\ar[u]^{\sim}\ar[r]& S(\fh).\ar@{=}[u]
}$$
The vertical isomorphisms are the realisations of the PBW theorem, with respect to our chosen basis (which also yields bases of $\fg_{\le 0}$ and $\fg_0$). The upper horizontal arrows are Harish-Chandra morphisms. Concretely, the first one is a projection with kernel $U(\fg)\fg_{1}$, the second one has kernel $\fg_{-1}U(\fg_{\le 0})$ and the final one has kernel $\fn^{\fs}_-U(\fg_0)+U(\fg_0)\fn^{\fs}$. The lower horizontal arrows are the obvious surjective algebra morphisms. For instance, the first one is the algebra morphism induced by the $\mC$-linear projection $\fg\tto \fg_{\le 0}$ with kernel $\fg_1$.

Now we apply Proposition \ref{prop:sergeev} above with $\mathfrak k=\mathfrak g$, $V=\mathfrak g^*$, $W=\mathfrak g_{\le 0}^*$, and $w_0$ any nonzero vector in $\mathfrak s^*$. It follows that $S(\fg)\to S(\fg_{\le 0})$ restricts to a monomorphism $S(\fg)^{\fg}\to S(\fg_{\le 0})$.
Moreover, since $S(\fg)\to S(\fg_{\le 0})$ is $\fg_0$-equivariant, the restriction to $S(\fg)^{\fg}\to S(\fg_{\le 0})$ takes values in $S(\fg_{\le0})^{\fg_0}$. Since $\fg_0$ contains the degree operator $\xi\partial_{\xi}$, the space $S(\fg_{\le0})^{\fg_0}$ is actually contained in $S(\fg_{0})$. In particular, the composite $S(\fg)\to S(\fg_0)$ from the diagram restricts to a monomorphism $S(\fg)^{\fg}\hookrightarrow S(\fg_0)^{\fg_0}$. By the classical Harish-Chandra isomorphism, or again \cite[Proposition~1.1]{Se2}, $S(\fg_0)\to S(\fh)$ also restricts to a monomorphism on $S(\fg_0)^{\fg_0}$. In conclusion, $S(\fg)^{\fg}\to S(\fh)$ is a monomorphism.

The left vertical isomorphism restricts to an isomorphism $S(\fg)^{\fg}\cong \mathcal{Z}(\fg)$. Therefore, by the above paragraph, all we need to prove is that, with $h:U(\fg)\to S(\fh)$ the Harish-Chandra homomorphism with kernel $\left(\fg_{-1}+\fn^\fs_-\right)U(\fg)+U(\fg)\left(\fn^\fs+\fg_{1}\right)$, we have that $h(z)$ is a scalar, for every central element $z\in U(\fg)$. If we interpret $h(z)\in S(\fh)$ as a polynomial function on $\fh^\ast$, then $h(z)(\lambda)$ is the scalar by which $z$ acts on $\Delta'(\lambda)$.

For each $z\in \mathcal{Z}(\fg)$ the function $\lambda\mapsto h(z)(\lambda)$ must be constant on the set of weights $\lambda$ for which $\Delta'(\lambda)$ is in the block of $\cO$ containing the trivial module. Corollary~\ref{cor:blocks:O} and Proposition~\ref{prop:block:sl2} then imply that $h(z)$ is constant on
$$\mZ\Phi=\mZ\Phi(\fs)\oplus\mZ\delta\subset (\fh^{\fs})^\ast\oplus \mC\delta=\fh^\ast.$$
The only polynomials which are constant on this root lattice (which is of maximal rank in $\fh^\ast$ and therefore Zariski dense) are scalars, which concludes the proof.
\end{proof}
\subsection{Some invariant theory}

In this subsection, we set $\fs=\mathfrak{sl}(n)$ for some integer $n\ge 2$.

\subsubsection{}\label{Matrixg}We set $V:=L(\omega_1)$, with $\omega_1$ the first fundamental weight of $\mathfrak{sl}(n)$. By Remark~\ref{RemSimp}, we have $V\cong\mC^{n|n}$ and the representation $\fg\to\mathfrak{gl}(n|n)$ allows to realise $\fg$ as
$$\fg\;=\;\left\{ \left( \begin{array}{cc} A & b I_n\\
C & A +d I_n\\
\end{array} \right) | ~ A,C\in \mC^{n\times n},~\text{tr$A$=0=tr$C$, $b,d\in \mC$} \right\},$$
with $I_n$ the identity $n\times n$ matrix.

\subsubsection{}
In this section we investigate the invariant endomorphisms of $\otimes^rV$.
Let $S_r$ denote the symmetric group on $r$ symbols. We have a canonical algebra morphism
$$\phi_r:\;\mC S_r\to \End_{\fg}(\otimes^r V),$$
where the action of the symmetric group is given by permuting the tensor factors with appropriate minus signs, see, e.g.,  \cite[\S1]{Berele}.

\begin{theorem}\label{ThmIT}
\begin{enumerate}
\item For $r< n$ and $r=n\ge 3$, $\phi_r$ yields an isomorphism
$$\mC S_r\;\stackrel{\sim}{\to}\;\End_{\fg}(\otimes^r V).$$
For $r=n=2$, the morphism $\phi_r$ is not surjective.
\item For $r>2n-2$, the morphism $\phi_r$ is not surjective.
\item For $r>1$, the morphism $$U(\fg)\;\to\; \End_{\mC S_r}(\otimes^r V)$$
is not surjective.
\end{enumerate}
\end{theorem}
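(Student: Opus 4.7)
For Part~(1) I would handle injectivity and surjectivity separately. Injectivity at $r\le n$ follows by restricting $\phi_r(\sigma)$ to the purely even subspace $V_{\oa}^{\otimes r}\subset V^{\otimes r}$: on this subspace the signed permutation action specialises to the ordinary permutation action of $S_r$ on $(\mC^n)^{\otimes r}$, which is faithful for $r\le n$ by classical Schur--Weyl duality for $\mathfrak{gl}(n)$. For the surjectivity in the range $r<n$ or $r=n\ge 3$ I would factor $\phi_r$ through super Schur--Weyl duality for the larger superalgebra $\mathfrak{gl}(n|n)\supset\fg$:
\[\phi_r:\;\mC S_r\twoheadrightarrow \End_{\mathfrak{gl}(n|n)}(V^{\otimes r})\hookrightarrow \End_\fg(V^{\otimes r}),\]
where the first arrow is a bijection by Berele--Regev (all partitions of $r$ fit in the $(n,n)$-hook, giving the multiplicity-free $\mathfrak{gl}(n|n)\times S_r$-decomposition $V^{\otimes r}\cong\bigoplus_{\lambda\vdash r}L_{n|n}(\lambda)\otimes S^\lambda$). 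The content then reduces to showing $\End_\fg(V^{\otimes r})=\End_{\mathfrak{gl}(n|n)}(V^{\otimes r})$, which I would do by proving that the restrictions $L_{n|n}(\lambda)|_\fg$ are pairwise Hom-orthogonal with $\End_\fg=\mC$; this can be checked on highest weights using Lemma \ref{LemSimp}, Corollary \ref{CorBGG} and the character formula of Lemma \ref{LemDelta}. The exceptional case $r=n=2$, where $\fg\cong\mathfrak{pe}(2)$, is precisely where this rigidity breaks down, and can be addressed by direct appeal to the known structure of $V\otimes V$ from \cite{CC, B+9}.

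For Part~(2) I would exhibit an explicit $\fg$-invariant endomorphism of $V^{\otimes r}$ outside $\phi_r(\mC S_r)$ once $r>2n-2$. Using the identification $V\cong\mC^n\otimes W$ with $W=\mC^{1|1}$, hence $V^{\otimes r}\cong (\mC^n)^{\otimes r}\otimes W^{\otimes r}$ as graded vector spaces on which $\fs$ acts only on the first tensor factor, the space of $\fs$-invariants in $(\mC^n)^{\otimes r}$ becomes non-trivial once $r\ge n$ through $\epsilon$-tensor contractions (Weyl's first fundamental theorem for $\mathrm{SL}(n)$). Pairing such contractions with operators on $W^{\otimes r}$ chosen so as to be compatible with the $\xi\partial_\xi$-grading and with the odd generators $\partial_\xi$ and $X\otimes\xi$ of $\fg$ should produce new elements of $\End_\fg(V^{\otimes r})$ that cannot be written as signed permutations. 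The threshold $r=2n-1$ reflects the need for at least one $\epsilon$-contraction together with a non-trivial extra degree of freedom on the $W^{\otimes r}$ side, beyond what the super-permutation action of $S_r$ already provides.

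For Part~(3) I would run a double-centraliser argument exploiting the failure of semisimplicity of $V^{\otimes r}$ as a $\fg$-module. In the range covered by Part~(1) we already know $\End_\fg(V^{\otimes r})=\phi_r(\mC S_r)$, so the image of $U(\fg)$ is contained in the bicommutant $\End_{\mC S_r}(V^{\otimes r})$, with equality forced only when $V^{\otimes r}$ is $\fg$-semisimple. However, for every $r\ge 2$ the module $V^{\otimes r}$ is non-semisimple over $\fg$: Lemma \ref{LemDelta} and Proposition \ref{PropCartan} together produce indecomposable direct summands of $V^{\otimes r}$ of composition length at least two, and the resulting non-split extensions obstruct surjectivity. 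For $r$ outside the range of Part~(1) I would instead exhibit an $\mC S_r$-equivariant idempotent projecting onto a proper socle layer of such an indecomposable summand; this cannot arise from $U(\fg)$, since elements of the image preserve the radical filtration of any $\fg$-indecomposable. The main technical obstacle across the three parts is controlling, with sharp bounds in $r$, how the $\mathfrak{gl}(n|n)$-isotypic decomposition of $V^{\otimes r}$ interacts with restriction to the much smaller algebra $\fg$.
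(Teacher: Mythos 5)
Your outline for Part (1) runs parallel to the paper's proof: both factor $\phi_r$ through the Berele--Regev double centraliser property for $\mathfrak{gl}(n|n)$ and reduce to showing that the simple $\mathfrak{gl}(n|n)$-summands $\mL(\lambda^\natural)$ become Hom-orthogonal with one-dimensional endomorphism rings upon restriction to $\fg$. For $r<n$ this is fine: every $\mL(\lambda)$ is a quotient of $\Delta(\lambda)$ (Lemma~\ref{LemCompSchur}) and the Hom-orthogonality follows from Lemma~\ref{LemDelta}. However, you treat $r=n\ge 3$ as if the same argument goes through, and it does not: the extra partition $(1^n)$ gives the weight $0$ for $\mathfrak{sl}(n)$, and $\mL(1^n)$ is \emph{not} a quotient of $\Delta(0)$. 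The paper devotes Lemmas~\ref{lem:not:onto} and~\ref{lem:socle} precisely to controlling the top and socle of $\mL(1^n)$ and $\mL(\la^\natural)$ in this boundary case, and your sketch does not account for that. Your injectivity remark via restriction to $V_{\oa}^{\otimes r}$ is fine but not where the difficulty lies.

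For Part (2) your invariant-theoretic idea (FFT for $\mathrm{SL}(n)$, $\epsilon$-contractions on the $\mC^n$ factor paired with operators on $W^{\otimes r}$) is genuinely different from the paper's argument, but as stated it is not a proof. $\epsilon$-contractions become available at $r\ge n$, yet the claimed threshold is $r>2n-2$; you gesture at ``a non-trivial extra degree of freedom on the $W^{\otimes r}$ side,'' but do not exhibit a single $\fg$-invariant endomorphism nor verify compatibility with the odd generators $\partial_\xi$ and $X\otimes\xi$, which is the whole point. The paper instead exhibits an explicit nonzero $\fg$-morphism $\mL(\mu^\natural)\tto L\hookrightarrow S^rV=\mL(r\delta_1)$ between two \emph{distinct} $\mathfrak{gl}(n|n)$-isotypic summands (with $\mu=(r+2-2n,1^{2n-2})$); since $\phi_r(\mC S_r)$ acts block-diagonally with respect to the $\mathfrak{gl}(n|n)$-decomposition, such a morphism is automatically outside the image. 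That is where $2n-2$ comes from, and your account does not recover it.

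Part (3) contains a real logical gap. You argue from the non-semisimplicity of $V^{\otimes r}$ over $\fg$, but non-semisimplicity alone does not imply that the evaluation map $U(\fg)\to \End_{\mC S_r}(V^{\otimes r})$ fails to be surjective; the double-centraliser theorem gives surjectivity \emph{from} semisimplicity, not the converse, and your proposed idempotent argument is not carried to a contradiction. The paper's argument is tighter and you should compare: project $\End_{\mC S_r}(\otimes^r V)\cong\bigoplus_\lambda\End_\mC(\mL(\lambda^\natural))$ onto the $\lambda=(r)$ component $\End_\mC(S^rV)$; if the map from $U(\fg)$ were onto, then by Burnside's theorem $S^rV$ would have to be $\fg$-irreducible; but $S^rV$ has simple top $L(r\omega_1)$ and $\dim(S^rV)_{\oa}>\dim L(r\omega_1)_{\oa}$ by Remark~\ref{RemSimp}, so it is not irreducible. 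This one-paragraph argument is what your approach is missing.
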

The remainder of this section is devoted to the proof of Theorem \ref{ThmIT}. We will rely on the established invariant theory for $\mathfrak{gl}(V)=\mathfrak{gl}(n|n)$ and our results on the representation theory of $\fg$.

\subsubsection{}We introduce some notation for the Lie superalgebra $\tilde{\mathfrak{g}}:=\mathfrak{gl}(n|n)$, which we realise as $2n\times 2n$ complex matrices. The elementary matrix with $1$ at the $(i,j)$th place and $0$ elsewhere is denoted by $E_{ij}$, for $1\le i,j\le 2n$. For example, with $1\le i,j\le n$, we have $E_{i+n,j}\in\fg$ if and only if $i\not=j$.  We have the standard Cartan and Borel subalgebra and furthermore we let $\{\delta_i|1\le i\le 2n\}$ be the basis dual to $\{E_{ii}|1\le i\le 2n\}$, the standard basis of the standard Cartan subalgebra.

\subsubsection{}\label{SecSW}Let $\la=(\la_1,\la_2,\ldots)$ be a partition with $\la_{n+1}\le n$. Then the transpose $\nu'$ of the partition $\nu:=(\la_{n+1},\la_{n+2},\ldots)$ is a partition of length $\ell(\nu')\le n$. We set $\la^\natural$ to be the bi-partition $(\la_1,\ldots,\la_n;\nu'_1,\ldots,\nu'_n)$, which we can interpret as a weight for $\tilde{\mathfrak{g}}$ as follows:
\begin{align}\label{weight:conv}
\la^\natural:=\sum_{i=1}^n\la_i\delta_i+\sum_{j=1}^n\nu'_j\delta_{n+j}.
\end{align}
Note that if $\ell(\la)\le n$, then $\la=\la^\natural$.

As a $U(\tilde{\mathfrak{g}})\otimes\mC S_r$-module, we have the following analogue of the classical Schur-Weyl duality \cite{Se, Berele}:
\begin{equation}\label{eqSW}\otimes^rV\;\cong\;\bigoplus_{\lambda\vdash r,\la_{n+1}\le n}\mL(\lambda^\natural)\boxtimes S_\lambda.\end{equation}
Here $S_\lambda$ stands for the simple Specht module of $S_r$ corresponding to the partition $\lambda$, while $\mL(\lambda^\natural)$ is a simple $\mathfrak{gl}(n|n)$-module of highest weight $\la^\natural$ with respect to the standard Borel subalgebra.

\begin{lemma}\label{LemCompSchur}
Consider a partition $\lambda$ with $\ell(\lambda)\le n$, so that $\la=\la^\natural$. We can interpret $\lambda$ canonically as an $\mathfrak{sl}(n)$-weight as well. Then there exists a (unique up to scalar multiple) $\fg$-morphism $p:\Delta(\lambda)\to\mL(\lambda)$. Furthermore, $p$ is an epimorphism if $\ell(\lambda)<n$.
\end{lemma}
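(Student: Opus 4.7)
The plan is to construct $p$ via Frobenius reciprocity, establish uniqueness through a weight-space count, and then exploit the atypicality of $\lambda^\natural$ to upgrade $p$ to an epimorphism when $\ell(\lambda)<n$.

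For existence of $p$, I would apply Frobenius reciprocity
\begin{align*}
\Hom_{\fg}(\Delta(\lambda),\mL(\lambda))\;\cong\;\Hom_{\fg_{\ge 0}}(L^0(\lambda),\mL(\lambda)),
\end{align*}
where on the right $L^0(\lambda)$ is extended trivially across $\fg_1$. The $\ftg$-highest weight vector $v_\lambda\in\mL(\lambda^\natural)$ is annihilated by the nilradical of the standard Borel of $\ftg$, which contains $\fn^{\fs}$ and $\partial_\xi=\sum_{i=1}^n E_{i,n+i}$; its $\ftg$-Cartan weight $\lambda^\natural=\sum_{i=1}^n\lambda_i\delta_i$ (with $\nu'=0$ since $\ell(\lambda)\le n$) restricts to the $\fh$-weight $\lambda+0\cdot\delta$. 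Moreover, the second $\mathfrak{gl}_n$-factor of $\ftg_0$ acts trivially on $L^0_{\ftg_0}(\lambda^\natural)$, so the $\fs$-submodule of $\mL(\lambda^\natural)$ generated by $v_\lambda$ is isomorphic to $L^0(\lambda)$, giving the required $\fg_{\ge 0}$-embedding, hence $p$. For uniqueness, I would check that $\mL(\lambda^\natural)_{\lambda+0\cdot\delta}$ is one-dimensional: a $\ftg$-Cartan weight $\mu=\sum a_i\delta_i+\sum b_j\delta_{n+j}$ of $\mL(\lambda^\natural)$ restricting to $\lambda+0\cdot\delta$ satisfies $a_i+b_i=\lambda_i+c$ (from $\hs$), $\sum_j b_j=0$ (from $\xi\partial_\xi$-eigenvalue), and $\sum_i a_i+\sum_j b_j=|\lambda|$ (total coefficient preserved by positive roots $\delta_i-\delta_j$); combined with polynomial positivity $a_i,b_j\ge 0$ these force $c=0$, $b_j=0$, and $a_i=\lambda_i$, so $\mu=\lambda^\natural$.

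For the epimorphism statement when $\ell(\lambda)<n$, the key is the atypicality vanishing $E_{2n,n}v_\lambda=0$ in $\mL(\lambda^\natural)$: the putative weight $\lambda^\natural-\delta_n+\delta_{2n}$ carries the forbidden negative coefficient $-1$ on $\delta_n$ (using $\lambda_n=0$), so does not occur in any polynomial $\ftg$-representation. The trace direction $T=\sum_{i=1}^n E_{n+i,i}$ spans the one-dimensional complement of $\fg_{-1}$ in $\ftg_{-1}$ and satisfies $T\cdot v_\lambda=(T-nE_{2n,n})\cdot v_\lambda\in\fg_{-1}\cdot v_\lambda$ since $T-nE_{2n,n}$ is traceless. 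I would then extend this observation by induction along PBW degree to show that the actions of each of the complementary directions $\ftg_{-1}\setminus\fg_{-1}$ (spanned by $T$), $\ftg_1\setminus\fg_1$ (whose elements all annihilate $v_\lambda$ as part of $\tilde{\fn}^+$), and $\ftg_0\setminus(\fg_0+\mC I_{2n})$ (whose action on $v_\lambda$ reduces via the embedded $\fs$) preserve the subspace $U(\fg)v_\lambda$; the central element $I_{2n}\in\ftg_0$ acts as the scalar $|\lambda|$ on $\mL(\lambda^\natural)$, and commutators such as $[T,\fg]\subseteq\mC T+\mC I_{2n}$ get absorbed recursively using these two controls together with $E_{2n,n}v_\lambda=0$. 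Since $\mL(\lambda^\natural)$ is $\ftg$-simple, $U(\ftg)v_\lambda=\mL(\lambda^\natural)$, and the resulting inclusion $U(\ftg)v_\lambda\subseteq U(\fg)v_\lambda$ yields that $p$ is surjective.

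The main obstacle is the inductive bookkeeping in the last paragraph: one must carefully verify that for every $Y\in\ftg\setminus\fg$ and $w\in U(\fg)v_\lambda$ the product $Yw$ stays in $U(\fg)v_\lambda$, by repeatedly exchanging $Y$ with $\fg$-factors via super-brackets and controlling each leftover commutator through atypicality relations (like $E_{2n,n}v_\lambda=0$ and $\tilde{\fn}^+v_\lambda=0$) together with centrality of $I_{2n}$.
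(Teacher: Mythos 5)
Your key observation is the same as the paper's: the odd lowering operator $E_{2n,n}$ kills the generating vector $v_\lambda$ when $\ell(\lambda)<n$ (your derivation of this via polynomiality of weights is a valid alternative to the paper's $\mathfrak{sl}(2)$-argument inside $\ftg$). But you make your own life hard in the epimorphism step by choosing $T=\sum_i E_{n+i,i}$ as the complementary direction in $\ftg_{-1}\setminus\fg_{-1}$: since $Tv_\lambda\neq 0$, you are forced into the ``inductive bookkeeping'' that you yourself flag as the main obstacle — and that step is genuinely left unexecuted, so as written the proof is incomplete. The cleaner route, which the paper takes, is to choose $F:=E_{2n,n}$ itself as the complement of $\fg_{-1}$ in $\ftg_{-1}$. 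With $\fa$ denoting a complement of $\fg_0$ in $\ftg_0$ that stabilizes the highest weight line, the PBW decomposition $U(\ftg)=U(\fg_{\le0})\,U(\mC F)\,U(\fa)\,U(\ftg_1)$ and simplicity of $\mL(\lambda)$ give
\[
\mL(\lambda)\;=\;U(\fg_{\le0})\,U(\mC F)\,U(\fa)\,w_\lambda\;=\;U(\fg_{\le0})\,w_\lambda,
\]
using $Fw_\lambda=0$, $\fa w_\lambda=\mC w_\lambda$ and $\ftg_1 w_\lambda=0$; this is exactly the image of $p$, so no induction over PBW degree is needed. In short: you have all the right ingredients (and a nice independent proof that $E_{2n,n}v_\lambda=0$), but you should substitute $E_{2n,n}$ for $T$ as the complementary generator and invoke the PBW factorization directly, rather than try to propagate the relation through commutators — the latter is a real gap in your write-up. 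Your existence/uniqueness argument via Frobenius reciprocity and the one-dimensionality of the $\lambda$-weight space is fine and matches the paper's (terser) reasoning in substance.
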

\begin{proof}
We consider the canonical three term $\mZ$-grading on $\ftg$ compatible with the one on $\fg$.
Now define the subalgebra
$$\fa\;=\;\left\{ \left( \begin{array}{cc} aI & 0\\
0 & B\\
\end{array} \right) | ~ B\in \mC^{n\times n},~\text{tr$B$=0, $a\in \mC$} \right\}\;\subset\;\ftg.$$
Then, for any vector $F\in\ftg_{-1}\backslash \fg_{-1}$ we have
$$\ftg_0=\fg_0\oplus \fa,\quad\mbox{and}\quad \ftg_{-1}=\fg_{-1}\oplus \mC F.$$

The highest weight (with respect to $\fb$) in the $\fg$-module $\mL(\la)$ is precisely $\lambda$, from which we get the unique morphism $p$. It maps a generating highest weight vector $v_\lambda\in\Delta(\lambda)$ to a vector $w_\lambda\in\mL(\la)$. Since $\mL(\la)$ is a simple $\ftg$-module, by the PBW theorem we have
$$\mL(\la)\;=\; U(\fg_{\le0})U(\mC F)U(\fa)w_\lambda.$$
We have $\fa w_\lambda=\mC w_\lambda$ and we can now interpret $p$ as
$$U(\fg_{\le0})v_\lambda\;\to\; U(\fg_{\le0})w_\lambda + U(\fg_{\le0})Fw_\lambda,\quad uv_\lambda\mapsto uw_\lambda. $$
Hence $p$ is an epimorphism if and only if $Fw_\lambda$ is actually contained in $U(\fg_{\le 0})w_\lambda$.

Now assume $\ell(\lambda)<n$. For $i\ge n$, the facts that $E_{i,i+1}E_{i+1,i}w_\lambda=0$, that $\mL(\la)$ is a simple $\ftg$-module and that $E_{i+1,i}$ is a simple root vector imply that $E_{i+1,i}w_\lambda=0$.
Since $E_{2n,n}$ is contained in the subalgebra generated by $E_{i+1,i}$ with $i\ge n$ it follows that $E_{2n,n}w_\lambda=0$. Hence we can take $F=E_{2n,n}$.
\end{proof}

The above lemma is strict in the sense that for $\ell(\lambda)=n$ there are examples where $p$ is not surjective.
For instance, if $\lambda_n\ge n$ (and $\lambda_{n+1}=0$), we know that $\mL(\la)$ is a typical $\mathfrak{gl}(n|n)$-module and it follows that
$$\dim\Delta(\lambda)<\dim \mL(\la).$$
The lemma below shows that already the `smallest' partition with $\ell(\lambda)=n$ yields a counterexample.

 In the sequel we let $\theta$ denote the highest root in $\fs$ so that $L^0_\theta\cong\fs$.

\begin{lemma}\label{lem:not:onto}
Suppose that $\lambda=(1^n)$. As a $\fg$-module we have a non-split exact sequence
\begin{align}\label{aux:ex:seq}
0\longrightarrow M\longrightarrow \mL(\la)\longrightarrow L(-\delta)\longrightarrow 0,
\end{align}
where $M$ is a homomorphic image of $\Delta(\la)=\Delta(0)$.  Here $M$ is the radical of $\mL(1^n)$, so $\mL(1^n)$ has simple top $L(-\delta)$.  Furthermore, we have $[\mL(1^n):L(\mu)]=0$, for $\mu\not=0$ and
we have $[M:L(-\delta)]=0$.
\end{lemma}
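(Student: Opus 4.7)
The plan is to realise $\mL((1^n))$ as the sign-isotypic subspace of $V^{\otimes n}$ under the super-action of $S_n$, yielding the $\fg_0$-decomposition $\mL((1^n)) = \bigoplus_{k=0}^n \wedge^{n-k}V_{\oa}\otimes S^k V_{\ob}$ in which the $k$-th summand has $\xi\partial_\xi$-eigenvalue $k$. Writing $V_{\oa}=\mathrm{span}(e_i)$ and $V_{\ob}=\mathrm{span}(f_i)$ as in~\ref{Matrixg}, the $\ftg$-highest weight vector $w := e_1\wedge\cdots\wedge e_n$ has $\fg$-weight $0$, and it is also a $\fg$-highest weight vector because the positive part of $\fb$ sits inside the standard positive part of $\ftg$: $\fn^{\fs}$ embeds diagonally as $E_{ij}+E_{n+i,n+j}$ for $i<j\le n$, and $\partial_\xi = \sum_k E_{k,n+k}$. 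Thus $M := U(\fg) w\subseteq \mL((1^n))$ is a homomorphic image of $\Delta(0)$, providing the first map in the asserted exact sequence.

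Next I would produce the $L(-\delta)$-quotient via the vector $F := E_{2n,n}\in\ftg_{-1}\setminus\fg_{-1}$ (which lies outside $\fg_{-1}$ because $E_{nn}$ is not traceless). Then $Fw = e_1\wedge\cdots\wedge e_{n-1}\wedge f_n$ has $\fg$-weight $-\delta$, and although $\partial_\xi\cdot Fw = [\partial_\xi,F]w = (E_{nn}+E_{2n,2n})w = w$ is nonzero inside $\mL((1^n))$, this image already lies in $M$; similarly $[E_{ij}+E_{n+i,n+j},F] = \delta_{j,n}E_{n+i,n}\in\fg_{-1}$, so $(E_{ij}+E_{n+i,n+j})Fw\in M$. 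Hence $\overline{Fw}$ is a $\fg$-highest weight vector of weight $-\delta$ in $\mL((1^n))/M$. To check $Fw\notin M$, I would identify the degree-one summand $\wedge^{n-1}V_{\oa}\otimes V_{\ob}\cong V_{\oa}^*\otimes V_{\ob}\cong\mathfrak{gl}(n)$; under this identification the $\fs$-equivariant map $X\mapsto(X\otimes\xi)w$ becomes the inclusion $\mathfrak{sl}(n)\hookrightarrow\mathfrak{gl}(n)$, whereas $Fw$ corresponds (up to sign) to $E_{nn}\notin\mathfrak{sl}(n)$. The same identification gives $[M:L(-\delta)]\le[\Delta(0):L(-\delta)] = [\wedge^1\fs:\mC] = 0$ via Lemma~\ref{LemDelta}. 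Non-splitting then follows because the unique trivial $\fs$-invariant in the $k=1$ summand is spanned by $\sum_i g_i$ with $g_i := e_1\wedge\cdots\wedge e_{i-1}\wedge f_i\wedge e_{i+1}\wedge\cdots\wedge e_n$, and a direct computation gives $\partial_\xi\sum_i g_i = n w\ne 0$, ruling out any $\fs$-invariant and $\partial_\xi$-annihilated section of weight $-\delta$.

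The main obstacle is proving that $\mL((1^n))/M$ is actually simple, hence equal to $L(-\delta)$. By Pieri's rule, $L^0_{\omega_{n-k}}\otimes L^0_{k\omega_1}$ contains a trivial $\fs$-summand precisely when $k\omega_1 = \omega_k$, i.e., for $k\in\{0,1\}$, giving $\dim\mL((1^n))^{\fs} = 2$. It therefore suffices to show $M$ contains every $\fs$-non-trivial isotypic component of each $(k)$-piece; for $k\ge 2$ the entire $(k)$-piece is $\fs$-non-trivial, so this reduces to surjectivity of the natural $\fs$-equivariant map $\wedge^k\fs\to\wedge^{n-k}V_{\oa}\otimes S^k V_{\ob}$, $X_1\wedge\cdots\wedge X_k\mapsto(X_1\otimes\xi)\cdots(X_k\otimes\xi)w$. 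Using the Cauchy-type embedding $\wedge^k V_{\oa}^*\otimes S^k V_{\ob}\hookrightarrow\wedge^k(V_{\oa}^*\otimes V_{\ob})$ together with Schur's lemma, this last step reduces to checking non-vanishing on a highest weight vector of each $\fs$-isotypic summand of the target, which is enumerated by Pieri. Once simplicity of $\mL((1^n))/M\cong L(-\delta)$ is established, the assertion that only composition factors $L(\mu)$ with $\mu=0$ of trivial $\fs$-weight appear in $\mL((1^n))$ follows from the $\fs$-invariant count together with $[M:L(-\delta)]=0$.
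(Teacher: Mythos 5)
Your overall strategy matches the paper's: realise $\mL((1^n))$ as $\wedge^n V$ with the $\xi\partial_\xi$-eigenspace decomposition $\bigoplus_k \wedge^{n-k}V_{\oa}\otimes S^k V_{\ob}$, take the $\ftg$-highest weight vector $w$ which is also a $\fg$-highest weight vector of weight $0$, set $M=U(\fg)w$ as the image of $\Delta(0)$, and use a vector in $\ftg_{-1}\setminus\fg_{-1}$ to produce the codimension-one $L(-\delta)$ quotient. (The paper uses $\tilde E = \sum_i E_{n+i,i}$ rather than your $F=E_{2n,n}$, but $\tilde E w = \sum_i g_i$ so this is only cosmetic; your non-splitting computation $\partial_\xi\sum g_i = nw$ is literally the paper's fact (ii).) Your observation that $M_{-\delta}=\fg_{-1}w\cong\mathfrak{sl}(n)\subsetneq\mathfrak{gl}(n)\cong\mL_{-\delta}$ and the vanishing $[M:L(-\delta)]\le[\Delta(0):L(-\delta)]=[\wedge^1\fs:\mC]=0$ are also the same as in the paper.

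The place where you diverge is the crucial step of showing $M$ has codimension exactly one, and this is where your argument remains incomplete. You aim to prove surjectivity of $\wedge^k\fs\to\wedge^{n-k}V_{\oa}\otimes S^k V_{\ob}$ for \emph{every} $k\ge 2$ at once, and your justification — pass through the Cauchy summand $\wedge^k V_{\oa}^*\otimes S^k V_{\ob}\hookrightarrow\wedge^k(V_{\oa}^*\otimes V_{\ob})$, invoke multiplicity-freeness and Schur, then ``check non-vanishing on highest weight vectors'' — is only a plan; you would have to actually produce, for each $k$ and each Pieri constituent of $\wedge^{n-k}V_{\oa}\otimes S^k V_{\ob}$, a preimage in $\wedge^k\mathfrak{sl}(n)$ and verify that it survives the projection. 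This is more work than you acknowledge, and in particular it presupposes (without argument) that each of these constituents occurs in $\wedge^k\mathfrak{sl}(n)$ with enough multiplicity. The paper avoids all of this with a short induction: it suffices to verify the single statement $\fg_{-1}\fg_{-1}w=\mL_{-2\delta}$ (the $k=2$ case, which is a small explicit check in the Grassmann model), because then $\fg_{-1}\tilde E w\subseteq\mL_{-2\delta}=\wedge^2\fg_{-1}w$, and for $k\ge 2$ one gets $\wedge^k\ftg_{-1}w=\wedge^k\fg_{-1}w+\wedge^{k-1}\fg_{-1}\tilde E w\subseteq\wedge^k\fg_{-1}w$ by applying $\wedge^{k-2}\fg_{-1}$ to the $k=2$ containment. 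You should adopt this reduction: it collapses your ``main obstacle'' to a single concrete verification in degree two rather than an infinite family of Pieri/Cauchy checks.
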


\begin{proof}
Recall $\ftg=\mathfrak{gl}(n|n)$ with the usual gradation $\ftg=\ftg_{-1}\oplus\ftg_0\oplus\ftg_{+1}$ compatible with the $\mZ$-gradation of $\fg$. Let $\tilde{E}=\sum_{i=1}^nE_{n+i,i}\in\ftg_{-1}$ so that we have
\begin{align*}
\ftg_{-1}=\fg_{-1}\oplus\mC \tilde{E}.
\end{align*}

When $\lambda=(1^n)$ we have  the following decomposition of $\mL(\la)$ into its $-\xi\partial_\xi$-eigenspaces:
\begin{align}
\mL(1^n)\cong\wedge^n(V)=\bigoplus_{k=0}^n\mL(1^n)_{-k\delta}\quad\mbox{with}\quad \mL(1^n)_{-k\delta}=\wedge^{n-k}(V_{\oa})\otimes S^k(V_{\ob}),
\end{align}
from which we see that {$[\mL(1^n):L(\mu)]=0$, for $\mu\not=0$}. Also, we have that $\mL(\la)_{-\delta}$, as a $\fg_0$-module, is isomorphic to $\wedge^{n-1}V_{\oa}\otimes V_{\ob}$, which decomposes into $L^0(\theta-\delta)\oplus L^0(-\delta)$.

Take a $\ftg$-highest weight vector $v_\la$ of $\mL(\la)$. By irreducibility of $\mL(\la)$ we must have $\mL(\la)_{-\delta}=\ftg_{-1} v_\la$. It follows therefore that we must have
\begin{align*}
\fg_{-1}v_\la=L^0(\theta-\delta),\ \text{and }\mC\tilde{E}v_\la=L^0(-\delta).
\end{align*}
The following are easy to verify using the explicit realisation of $\ftg$ as first order differential operators on the super Grassmann algebra $\wedge^nV=\mL(\la)$:
\begin{itemize}
\item[(i)] $\fg_{-1}L^0(\theta-\delta)=\mL(\la)_{-2\delta}$.
\item[(ii)] $\partial_\xi L^0(-\delta)=\mC v_\la$.
\end{itemize}
By (i) we have $\fg_{-1}\tilde{E}v_\la\subseteq\mL(\la)_{-2\delta}=\fg_{-1}L^0(\theta-\delta)\subseteq\wedge^2(\fg_{-1})v_\la$. Hence, for $k\ge 2$, we have
$$\mL(\la)_{-k\delta}=\wedge^{k}(\ftg_{-1})v_\la= \wedge^{k}(\fg_{-1})v_\la+\wedge^{k-1}(\fg_{-1})\tilde{E}v_\la=\wedge^k(\fg_{-1})v_\la.$$

It follows now that the $U(\fg)$-module generated by $v_\la$ has codimension one and is a quotient of $\Delta(\la)$. Furthermore, $\mL(\la)/U(\fg)v_\la\cong L(-\delta)$. By (ii), the exact sequence \eqref{aux:ex:seq} is non-split and $U(\fg)v_{-\delta}=\mL(1^n)$.
\end{proof}

\begin{lemma}\label{lem:socle}
Let $\la$ be a partition with $\la_{n+1}\le n$. Denote by $\la'$ the conjugate partition of $\la$ and let $m=\sum_{i=1}^n\la'_i$ be the sum of the first $n$ parts of $\la'$. Suppose that $L(\mu-k\delta)$ lies in the socle of the $\fg$-module $\mL(\la^\natural)$. Then we have:
\begin{itemize}
\item[(i)] If $\mu=0$, then $k=m-1,m$.
\item[(ii)] If $\mu\not=0$, then $k=m-2,m-1$.
\end{itemize}
\end{lemma}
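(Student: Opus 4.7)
The plan is to exploit the $\xi\partial_\xi$-grading
$\mL(\la^\natural)=\bigoplus_k \mL(\la^\natural)_{(k)}$,
where $\mL(\la^\natural)_{(k)}$ denotes the $k$-eigenspace of $\xi\partial_\xi$. First I would pin down the range via the character: viewing $\mL(\la^\natural)$ as a $\mathfrak{gl}(n)\oplus\mathfrak{gl}(n)$-module, its character is the hook Schur function $HS_\la(x_1,\ldots,x_n;y_1,\ldots,y_n)$, in which each $y$-letter contributes $+1$ to the $\xi\partial_\xi$-eigenvalue while each $x$-letter contributes $0$. In a semistandard hook tableau the primed letters increase strictly along rows, so row $i$ can carry at most $\min(\la_i,n)$ primed letters; the maximal total $y$-degree, and hence the maximal $\xi\partial_\xi$-eigenvalue on $\mL(\la^\natural)$, is therefore $\sum_i\min(\la_i,n)=\sum_{i=1}^n\la'_i=m$.

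The upper bounds on $k$ follow at once from Lemma~\ref{LemSimp}: the $\fg_0$-restriction of $L(\mu-k\delta)$ has $\xi\partial_\xi$-eigenvalues $\{k,k+1\}$ when $\mu\ne 0$ and $\{k\}$ when $\mu=0$, so an embedding $L(\mu-k\delta)\hookrightarrow\mL(\la^\natural)$ forces $k+1\le m$ respectively $k\le m$. For the lower bounds, given such an embedding set $j:=k+1$ if $\mu\ne 0$ and $j:=k$ if $\mu=0$; I claim $j\ge m-1$. The bottom $\fg_0$-layer of the submodule, namely the summand $L^0(\mu-(k+1)\delta)$ or $L^0(-k\delta)$, lies inside $\mL(\la^\natural)_{(j)}$ and must be annihilated by $\fg_{-1}$, for otherwise $\fg_{-1}$ would produce submodule vectors at $\xi\partial_\xi$-eigenvalue $j+1$, contradicting the restriction from Lemma~\ref{LemSimp}. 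Thus it suffices to show that the $L^0_\mu$-isotypic subspace of $\ker\bigl(\fg_{-1}\colon\mL(\la^\natural)_{(j)}\to\mL(\la^\natural)_{(j+1)}\bigr)$ vanishes whenever $j<m-1$.

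To establish this vanishing I would combine the $\ftg$-simplicity of $\mL(\la^\natural)$, which yields $\ftg_{-1}\mL(\la^\natural)_{(j)}=\mL(\la^\natural)_{(j+1)}$ for every $j<m$, with the splitting $\ftg_{-1}=\fg_{-1}\oplus \mC\tilde E$, where $\tilde E=\sum_{i=1}^n E_{n+i,i}$ commutes with $\fs$. Since $\tilde E$ is $\fs$-invariant its action respects the $L^0_\mu$-isotypic decomposition and contributes only a scalar-like operator on each such piece; consequently $\fg_{-1}$ exhausts $\mL(\la^\natural)_{(j+1)}$ modulo $\tilde E\cdot\mL(\la^\natural)_{(j)}$, and tracking $L^0_\mu$-isotypic multiplicities through the hook Schur/Berele--Regev decomposition then forces the $\fg_{-1}$-kernel on each $L^0_\mu$-isotypic piece to vanish for $j<m-1$. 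The main obstacle will be carrying out this combinatorial kernel computation uniformly in $j$ and $\mu$; the case $\mu=0$ is the most delicate because the trivial $\fs$-isotypic component appears with multiplicity one in each relevant graded piece, leaving essentially no slack to absorb the $\tilde E$-contribution.
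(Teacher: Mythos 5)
Your reduction is sound and closely tracks the paper's opening moves: the range $0\le k\le m$ of the $\xi\partial_\xi$-grading, the upper bounds on $k$ from the $\fg_0$-restriction of $L(\mu-k\delta)$, and the observation that the bottom $\fg_0$-layer of a socle constituent must be annihilated by $\fg_{-1}$ are all correct and agree with what the paper needs. But the final step — proving that the relevant part of $\ker\bigl(\fg_{-1}\colon\mL(\la^\natural)_{(j)}\to\mL(\la^\natural)_{(j+1)}\bigr)$ vanishes for $j<m-1$ — is where your proof has a genuine gap. You propose to deduce it by ``tracking $L^0_\mu$-isotypic multiplicities through the hook Schur / Berele--Regev decomposition,'' but this is not carried out, and the supporting claim that $\tilde E$ acts ``as a scalar-like operator'' on each isotypic piece is false whenever the isotypic component has multiplicity greater than one (which is the generic situation). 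The identity $\ftg_{-1}\mL(\la^\natural)_{(j)}=\mL(\la^\natural)_{(j+1)}$ gives only a surjectivity statement from level $j$ to level $j+1$; it does not by itself control the kernel of $\fg_{-1}$, and a purely numerical multiplicity count would have to rule out the possibility that the same isotype is produced both by $\tilde E$ and by $\fg_{-1}$ from different sources, which you have not done.

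The ingredient that closes the gap — and that the paper uses — is the uniqueness of the highest weight vector of the simple $\ftg$-module $\mL(\la^\natural)$ with respect to the opposite Borel $\ftg_{-1}+\tilde{\fb}$, together with the fact that this vector lies in the bottom graded piece $\mL(\la^\natural)_{(m)}$ (this is the reference to \cite[Theorem~2.55]{CW}). Concretely: if $w\ne 0$ at level $j$ satisfies $\fg_{-1}w=0$, then either $\tilde Ew=0$, in which case $\ftg_{-1}$ kills the $\ftg_0$-submodule $U(\ftg_0)w$, which therefore contains a highest weight vector for the opposite Borel and hence lies at level $m$, giving $j=m$; or $\tilde Ew\ne 0$, in which case $\tilde E^2=0$ and the super-commutator $[\fg_{-1},\tilde E]=0$ show that $\tilde Ew$ at level $j+1$ is annihilated by all of $\ftg_{-1}$, and the same argument gives $j+1=m$, i.e.\ $j=m-1$. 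This is a two-line dichotomy rather than a combinatorial kernel computation. You already introduced the splitting $\ftg_{-1}=\fg_{-1}\oplus\mC\tilde E$ and you cite $\ftg$-simplicity, so the ingredients are on the table; the point you are missing is that $\ftg_{-1}$-annihilation, not multiplicity bookkeeping, is the mechanism that forces the vector to level $m$.
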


\begin{proof}
First, we note that from the character formula of $\mL(\la^\natural)$ in \cite[Section 6]{Berele}, we have the following decomposition of $\mL(\la^\natural)$ into its $-\xi\partial_\xi$-eigenspaces:
\begin{align*}
{\mL(\la^\natural)}=\bigoplus_{k=0}^{m}{\mL(\la^\natural)}_{-k\delta}.
\end{align*}

Suppose that $L(\mu-k\delta)$ is in the socle of $\mL(\la^\natural)$.

(i).  Here we suppose that $\mu=0$. Then $\fg_{-1}L(0-k\delta)=0$. There are two possibilities. Either $\tilde{E}L(0-k\delta)=0$ or else $\tilde{E}L(0-k\delta)\not=0$. In the first case, consider the $\ftg_0$-module $M=U(\ftg_0)L(0-k\delta)$, which lies in $\mL(\la^\natural)_{-k\delta}$. Since we have $\ftg_{-1}M=0$, it follows that it must contain a highest weight vector with respect to the Borel subalgebra $\ftg_{-1}+\tilde{\fb}$, where $\tilde{\fb}$ is the standard Borel subalgebra of $\ftg_0$. Now, it follows from \cite[Theorem 2.55]{CW} that such a vector has to lie in $\mL(\la^\natural)_{-m\delta}$. Thus, in this case we have $k=m$.

Now, if $\tilde{E}L(0-k\delta)\not=0$, then $\tilde{E}\tilde{E}L(0-k\delta)=0$ and also $\fg_{-1}\tilde{E}L(0-k\delta)=0$. Thus, the $U(\ftg_0)$-module $M=U(\ftg_0)\tilde{E}L(0-k\delta)$ is annihilated by $\ftg_{-1}$ and hence it contains a $\ftg$-highest weight vector with respect to the Borel subalgebra $\ftg_{-1}+\tilde{\fb}$, which lies in $\mL(\la^\natural)_{-m\delta}$. Thus, we have $-k-1=-m$, and hence $k=m-1$. This proves Part (i).

(ii). Now, assume that $\mu\not=0$. In this case we must have $\fg_{-1}L(\mu-k\delta)_{-k\delta}= L(\mu-k\delta)_{(-k-1)\delta}\not=0$ and we consider the $U(\ftg_0)$-module $M=U(\ftg_0)L(\mu-k\delta)_{(-k-1)\delta}$, if $\tilde{E}L(\mu-k\delta)_{(-k-1)\delta}=0$, or else $M=U(\ftg_0)\tilde{E}L(\mu-k\delta)_{(-k-1)\delta}$, if $\tilde{E}L(\mu-k\delta)_{(-k-1)\delta}\not=0$. Now, an analogous argument as in (i) shows that in the first case $k=m-1$ and in the second case $k=m-2$,
\end{proof}

\begin{proof}[Proof of Theorem~\ref{ThmIT}]
We start by proving Part (1). Assume $r< n$. For $\lambda\vdash r$, Lemma~\ref{LemCompSchur} implies that
$\mL(\la)$ is a quotient of $\Delta(\lambda)$ in the case when $r<n$. Lemma~\ref{LemDelta} therefore implies that for $\lambda,\mu\vdash r$ we have
$$\dim\Hom_{\fg}(\mL(\la), \mL(\mu))\;=\;\delta_{\lambda\mu}.$$
The isomorphism now follows from \eqref{eqSW}.

Now, consider the case of $r=n$. In this case the only partition of $\la$ with $\ell(\la)\ge n$ is the partition $(1^n)$. As above, in order to prove Part (1) in this case it suffices to show the following:
\begin{itemize}
\item[(i)] $\dim\Hom_{\fg}(\mL(1^n), \mL(1^n))\;=\;1$.
\item[(ii)] $\dim\Hom_{\fg}(\mL(\la), \mL(1^n))\;=\;0$, for $\la\not=(1^n)$.
\item[(iii)] $\dim\Hom_{\fg}(\mL(1^n), \mL(\la))\;=\;\delta_{n,2}$, for $\la\not=(1^n)$.
\end{itemize}
By Lemma \ref{lem:not:onto}, the $\fg$-module $\mL(1^n)$ has a simple top and this top only appears once as a constituent in $\mL(1^n)$. This implies (i).

By Lemma~\ref{LemCompSchur}, $\mL(\la)$ has a simple top $L(\lambda)$. Since this simple does not appear in $\mL(1^n)$, by Lemma \ref{lem:not:onto}, (ii) follows.

By Lemma \ref{lem:not:onto}, the radical of $\mL(1^n)$ has a simple top $L(0)$. Since $[\mL(\lambda):L(0)]=0$ we find
$$\Hom_{\fg}(\mL(1^n),\mL(\lambda))\cong \Hom_{\fg}(L(-\delta),\mL(\lambda)).$$
By Lemma \ref{lem:socle} the above space is zero when $n\ge 3$. Thus, in the case of $n\ge 3$, (iii) is proved.


Now consider the case of $r=n=2$. Since $\mL(2)=S^2(V)$, it follows easily that in $K_0(\cF)$
$$\mL(2)\;=\;L(\theta)+L(-\delta)+L(-2\delta).$$
On the other hand, Proposition~\ref{PropCartan} implies that in $K_0(\cF)$
$$P(-\delta)\;=\; L(0)+L(-\delta)+L(-3\delta)+L(-4\delta)+L(\theta-\delta)+L(\theta-2\delta)$$
and hence the entire radical of $P(-\delta)$ is in the kernel of $P(-\delta)\to\mL(2)$. Thus $L(-\delta)$ lies in the socle of $\mL(2)$ proving (iii) for $n=2$. Note that an easier argument uses the information on $\Ext^1(L(-\delta),-)$ in Corollary~\ref{ExExt1} below.
This completes the proof of Part (1).

Now we prove Part (2). We have $r>2n-2$ and we set $a:=r+2-2n>0$.
First we consider the symmetric power $S^rV$. It is well known that
$$S^rV\;\cong\;\bigoplus_{i=0}^n S^{r-i}V_{\oa}\otimes \wedge^i V_{\ob},$$
and hence $[S^rV:L]=1$ with $L:=L((r-n)\omega_1+(1-n)\delta)$ and furthermore $[S^rV:L(\nu+a\delta)]=0$ for $a\le -n$. Since, by Proposition~\ref{PropCartan}, the radical of the projective cover of $L$ only has such constituents $L(\nu+a\delta)$ with $a\le -n$, we find that
$L$ appears in the socle of $S^rV$.

On the other hand, consider the partition $\mu=(a,1^{2n-2})$ and the $\mathfrak{gl}(n|n)$-module $\mL(\mu^\natural)$. Now the $\mathfrak{gl}(n|n)$-highest weight in $\mL(\mu^\natural)$ by \eqref{weight:conv} is $a\delta_1+\sum_{i=2}^n\delta_i+(n-1)\delta_{n+1}$, which restricts to the $\fg$-weight $(a+n-2)\omega_1+(1-n)\delta$.
We thus find that
$$\mL(\mu^\natural)_{k\delta}=0\mbox{ if } k>1-n\qquad\mbox{and}\qquad  L^0((r-n)\omega_1+(1-n)\delta)\;\subset\; \mL(\mu^\natural)_{(1-n)\delta}. $$
Consequently, we have $[\mL(\mu^\natural):L]\not=0$ and by Proposition~\ref{PropCartan}, there are no other simple constituents in $\mL(\mu^\natural)$ for which $L$ appears as a constituent in their projective cover.
Thus we find that $L$ appears now in the top of $\mL(\mu^\natural)$. Again, this also follows from the information on $\Ext^1(-,L)$ in Corollary~\ref{ExExt1} below.
 In conclusion, we find a $\fg$-module morphism
$$\mL(\mu^\natural)\tto L\hookrightarrow S^rV=\mL(r\delta_1)$$
which clearly is not in the image of $\phi_r$.

Now we prove Part (3). By Lemma~\ref{LemCompSchur}, $S^rV$ is a quotient of $\Delta(r\omega_1)$. In particular it has simple top $L(r\omega_1).$
If $r>1$, we have
$$\dim (S^rV)_{\oa}\;>\; \dim (S^rV_{\oa})\;=\; \dim L(r\omega_1)_{\oa},$$
where the equality follows from Remark~\ref{RemSimp}. Hence $\mL(r\delta_1)=S^rV\tto L(r\omega_1)$ is not an isomorphism, and so the map
$$U(\fg)\;\to\;\End_{\mC S_r}(\otimes ^rV)\cong \bigoplus_{\lambda\vdash r,\la_{n+1}\le n}\End_{\mC }(\mL(\lambda^\natural))\;\to\; \End_{\mC}(S^rV)$$
cannot be an epimorphism.
\end{proof}

\begin{question}
For $\fs=\mathfrak{sl}(2)$, Theorem~\ref{ThmIT} answers the question of when $\phi_r$ is surjective completely: it is surjective if and only if $r=0,1$. Note also that for arbitrary $\fs=\mathfrak{sl}(n)$ by the results of \cite{Berele, Se} recalled in \eqref{eqSW} we know that $\phi_r$ is injective if and only if $r<(n+1)^2$. For $\fs=\mathfrak{sl}(n)$ with $n>2$, the question remains whether $\phi_r$ is surjective (or equivalently an isomorphism) in the region $[n+1,2n-2]$.
\end{question}

\section{Higher extensions and Koszulity}\label{SecExt}

In this section we compute all extension groups between simple modules in $\cF$ and prove that every non-principal block in $\cF$ is Koszul.

\subsection{Extensions}
Again we let $\lambda+a\delta$ and $\mu+b\delta$ be arbitrary weights in $\Lambda^+$. Recall the $\fs$-module morphism $D^i[V]:S^i(\fs)\otimes V\to S^{i-1}(\fs)\otimes V$ from \eqref{Stuffeq}.

\begin{theorem}\label{ThmExt}
If $i\in\mN$ and $\lambda,\mu\not=0$, we have
\begin{align*}
\dim\Ext^i_{\cF}&(L(\lambda+a\delta), L(\mu+b\delta))\\
&=\; \delta_{a-b,i}\left[\ker D^i[L^0_\lambda]:L^0_\mu\right]+\delta_{a-b,i+1}\left[\coker D^{i+1}[L^0_\lambda]:L^0_\mu\right],\\
\dim\Ext^i_{\cF}&(L(\lambda+a\delta), L(b\delta))\;=\;\delta_{a-b,i+1}[S^i\fs :L^0_\lambda],\\
\dim\Ext^i_{\cF}&(L(a\delta), L(\mu+b\delta))\;=\; \delta_{a-b,i}[S^i\fs :L^0_\lambda],\\
\dim\Ext^i_{\cF}&(L(a\delta), L(b\delta))\;=\; \begin{cases} [S^{\frac{a-b+i}{2}}\fs:\mC], \mbox{ if $a-b+i$ is even and $-i\le a-b\le i$,}\\
0,\mbox{ otherwise}.
\end{cases}
\end{align*}
\end{theorem}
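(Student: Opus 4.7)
The plan is to use the cohomological complex from Lemma~\ref{LemRHom}, combined with a reduction that allows the simpler ``trivial $\fg_{\ob}$-action on the target'' form of the differential to be used whenever possible. The key observation is that, for $\mu\neq 0$, Lemma~\ref{LemSimp} gives $L(\mu+b\delta)=\nabla(\mu+b\delta)=\mathrm{Coind}^{\fg}_{\fg_{\le 0}}L^0(\mu+b\delta)$. Since $U(\fg)$ is free as a right $U(\fg_{\le 0})$-module by PBW, $\mathrm{Coind}$ is exact; being right adjoint to the exact functor $\Res$, it preserves injectives, so
\[
\Ext^i_{\fg}(M, L(\mu+b\delta))\;\cong\;\Ext^i_{\fg_{\le 0}}(\Res\,M, L^0(\mu+b\delta)),
\]
and since $\fg_{-1}$ acts trivially on $L^0(\mu+b\delta)$, Lemma~\ref{LemRHom} applied to $\fg_{\le 0}$ yields an explicit complex.

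For the identity with $\lambda,\mu\neq 0$, I would use Remark~\ref{RemSimp} to write $\Res\,L(\lambda+a\delta)=L^0_\lambda[0]\oplus L^0_\lambda[1]$ at $\delta$-weights $a\delta$ and $(a-1)\delta$, with $\fg_{-1}$ acting as $\sigma_0(v)\mapsto\sigma_1(Yv)$. Setting $c=a-b$ and noting that $S^n\fg_{-1}\cong S^n\fs$ sits at $\delta$-weight $-n\delta$, the $\delta$-weight constraint forces the complex $C^n=\Hom_{\fg_0}(S^n\fg_{-1}\otimes\Res\,L(\lambda+a\delta), L^0(\mu+b\delta))$ to vanish unless $n\in\{c-1,c\}$, with natural identifications $C^{c-1}\cong\Hom_{\fs}(S^{c-1}\fs\otimes L^0_\lambda, L^0_\mu)$ (from the $L^0_\lambda[1]$ summand) and $C^c\cong\Hom_{\fs}(S^c\fs\otimes L^0_\lambda, L^0_\mu)$ (from $L^0_\lambda[0]$). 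Unwinding the coboundary, I expect to identify $d\colon C^{c-1}\to C^c$ as $\alpha\mapsto\alpha\circ D^c[L^0_\lambda]$, since $\fg_{-1}$ acts on $L^0_\lambda[0]$ precisely as the action map $Y\otimes v_0\mapsto Yv_0\in L^0_\lambda[1]$. Standard Hom-exactness in the semisimple category of $\fs$-modules then yields $H^{c-1}\cong[\coker D^c[L^0_\lambda]:L^0_\mu]$ and $H^c\cong[\ker D^c[L^0_\lambda]:L^0_\mu]$, matching the stated formula.

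The three remaining cases are variants. For $\lambda=0$, $\mu\neq 0$ the same reduction applies with $\Res\,L(a\delta)=\mC_{a\delta}$ one-dimensional and $\fg_{-1}$-trivial, so only $n=c$ contributes and $d=0$, giving $\Ext^c\cong[S^c\fs:L^0_\mu]$. For $\lambda\neq 0$, $\mu=0$ I would apply duality $\Ext^i(M,N)\cong\Ext^i(N^*,M^*)$ together with Corollary~\ref{CorBGG}(2), noting that $L(\lambda+a\delta)^*=L(-w_0\lambda+(1-a)\delta)$ now has nonzero $\fs$-weight $-w_0\lambda$, so the previous case applies. For $\lambda=\mu=0$ both modules have trivial $\fg_{\ob}$-action and Lemma~\ref{LemRHom} applies directly with $d=0$; decomposing $S^i\fg_{\ob}=\bigoplus_{k=0}^i S^{i-k}\fs\otimes\mC\partial_\xi^k$ and matching $\delta$-weights isolates the single contribution $[S^{(i+c)/2}\fs:\mC]$ under the parity and range conditions on $a-b+i$.

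The main obstacle I foresee is the identification of the connecting differential in the first case as composition with $D^c[L^0_\lambda]$: this requires careful bookkeeping of the $\fg_{-1}$-action across the decomposition $L^0_\lambda[0]\oplus L^0_\lambda[1]$ and of the $\delta$-weight identifications that collapse $L^0_\lambda[\epsilon]$ to $L^0_\lambda$. Once this is in place, all further steps are routine short-exact-sequence manipulations in the semisimple $\fs$-module category.
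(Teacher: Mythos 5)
Your proposal is correct and follows essentially the same route as the paper: the paper likewise reduces via $L(\mu+b\delta)\cong\nabla(\mu+b\delta)$ and Shapiro's lemma to $\Ext^i_{\cF(\fg_{\le 0})}(\Res L(\lambda+a\delta),L^0(\mu+b\delta))$, uses Remark~\ref{RemSimp} to unwind the complex and identify the differential with $D^i[L^0_\lambda]$, handles $\lambda\ne0,\ \mu=0$ by duality, and treats $\lambda=\mu=0$ directly from Lemma~\ref{LemRHom}. The only cosmetic difference is that the paper first computes the homology $H_i(C_\bullet(M))$ of the standard complex as a $\fg_0$-module (Proposition~\ref{ExtNab}) and then applies $\Hom_{\fg_0}(-,L^0(\mu+b\delta))$, whereas you work directly in the $\Hom$-complex and compute cohomology, an equivalent reformulation by semisimplicity of $\fg_0$-modules.
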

First we review some special cases of the result.
As an application of Lemma~\ref{LemSurj}(2), we can simplify the expression in Theorem~\ref{ThmExt} in the following special case.
\begin{example}\label{ExampKoszul}
If $L^0_\lambda$ contains no zero-weight vectors and if $\mu\not=0$ then
$$\dim\Ext^i_{\cF}(L(\lambda+a\delta), L(\mu+b\delta))\;=\; \delta_{a-b,i}\left([S^i\fs\otimes L^0_\lambda:L^0_\mu]- [S^{i-1}\fs\otimes L^0_\lambda:L^0_\mu]\right). $$
\end{example}

Application of Lemma~\ref{LemSurj}(4) and (5) yields the following explicit expression for the first extensions.
\begin{corollary}\label{ExExt1}
Let $\theta$ denote the highest root of $\fs$. If $\lambda,\mu\not=0$, we have
\begin{align*}
&\dim\Ext^1_{\cF}(L(\lambda+a\delta), L(\mu+b\delta))\;=\; \delta_{a-b,1}(\left[\fs\otimes L^0_\lambda:L^0_\mu\right]-\delta_{\lambda,\mu}),\\
&\dim\Ext^1_{\cF}(L(\lambda+a\delta), L(b\delta))\;=\;\delta_{a-b,2}\delta_{\lambda,\theta},\\
&\dim\Ext^1_{\cF}(L(a\delta), L(\la+b\delta))\;=\; \delta_{a-b,1}\delta_{\lambda,\theta},\\
&\dim\Ext^1_{\cF}(L(a\delta), L(b\delta))\;=\; \delta_{b-a,1}.
\end{align*}
\end{corollary}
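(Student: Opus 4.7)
The plan is to specialize Theorem \ref{ThmExt} to $i=1$ in each of the four cases listed and simplify the resulting expressions using Lemma \ref{LemSurj}(4)--(5) together with the identification $L^0_\theta \cong \fs$ of the adjoint representation as a simple $\fs$-module.

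For the first formula (where $\lambda, \mu \neq 0$), Theorem \ref{ThmExt} reads
\[
\dim\Ext^1_{\cF}(L(\lambda+a\delta), L(\mu+b\delta)) = \delta_{a-b,1}[\ker D^1[L^0_\lambda]:L^0_\mu] + \delta_{a-b,2}[\coker D^2[L^0_\lambda]:L^0_\mu].
\]
Since $L^0_\lambda \neq \mC$, Lemma \ref{LemSurj}(5) gives that $D^1[L^0_\lambda]:\fs\otimes L^0_\lambda\to L^0_\lambda$ is surjective, so the short exact sequence $0\to \ker D^1[L^0_\lambda]\to \fs\otimes L^0_\lambda\to L^0_\lambda\to 0$ yields
\[
[\ker D^1[L^0_\lambda]:L^0_\mu] = [\fs\otimes L^0_\lambda:L^0_\mu] - \delta_{\lambda,\mu}.
\]
For the second summand, Lemma \ref{LemSurj}(4) says that $\coker D^2[L^0_\lambda]$ is either zero or a trivial $\fs$-module, and since $\mu \neq 0$ the multiplicity $[\coker D^2[L^0_\lambda]:L^0_\mu]$ vanishes. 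This produces the first formula of the corollary.

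The remaining three formulas are direct evaluations using the fact that, as $\fs$ is simple and non-trivial, $[\fs:L^0_\nu] = \delta_{\nu,\theta}$ and $[\fs:\mC]=0$. The second and third formulas drop out immediately after substituting $[S^1\fs:L^0_\lambda]=\delta_{\lambda,\theta}$. For the fourth, the parity condition that $a-b+1$ be even combined with $-1\le a-b\le 1$ restricts $a-b$ to the two values $\pm 1$; the case $a-b=1$ contributes $[S^1\fs:\mC]=0$ while $a-b=-1$ contributes $[S^0\fs:\mC]=1$, producing $\delta_{b-a,1}$.

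No serious obstacle is expected: once Theorem \ref{ThmExt} is available, the entire content of the corollary lies in the observations about $D^1$ and $D^2$ furnished by Lemma \ref{LemSurj}(4)--(5), and in identifying the adjoint representation inside $\fs$.
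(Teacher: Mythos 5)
Your proof is correct and follows exactly the approach the paper intends: the paper's own justification is a one-line pointer saying the corollary follows by applying Lemma \ref{LemSurj}(4)--(5) to Theorem \ref{ThmExt} at $i=1$, and you have carried out that specialization carefully (including the case analysis in the fourth formula and the identification $[\fs:L^0_\nu]=\delta_{\nu,\theta}$).
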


Now we start the proof of Theorem~\ref{ThmExt}.
\begin{prop}\label{ExtNab}
If $i\in\mN$ and $\lambda\not=0$, we have
\begin{align*}
\dim\Ext^i_{\cF}&(L(\lambda+a\delta), \nabla(\mu+b\delta))\\
&=\; \delta_{a-b,i}\left[\ker D^i[L^0_\lambda]:L^0_\mu\right]+\delta_{a-b,i+1}\left[\coker D^{i+1}[L^0_\lambda]:L^0_\mu\right],\\
\dim\Ext^i_{\cF}&(L(a\delta), \nabla(\mu+b\delta))\;=\; \delta_{a-b,i}[S^i\fs :L^0_\lambda].
\end{align*}

\end{prop}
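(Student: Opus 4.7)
The plan is to apply Frobenius reciprocity for coinduction along $\fg_{\le 0}\hookrightarrow\fg$ and then invoke the relative Lie superalgebra cohomology complex of Lemma \ref{LemRHom} for the smaller algebra $\fg_{\le 0}$, where the target module has trivial odd action so that the simplified differential applies.

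The restriction functor $\cF(\fg)\to\cF(\fg_{\le 0})$ is exact, so its right adjoint $\mathrm{coind}_{\fg_{\le 0}}^{\fg}=\Hom_{U(\fg_{\le 0})}(U(\fg),-)$ preserves injectives. Since $\nabla(\mu+b\delta)=\mathrm{coind}_{\fg_{\le 0}}^\fg L^0(\mu+b\delta)$ by definition, the derived adjunction gives, for every $M\in\cF(\fg)$,
$$\Ext^i_{\cF(\fg)}\bigl(M,\nabla(\mu+b\delta)\bigr)\;\cong\;\Ext^i_{\cF(\fg_{\le 0})}\bigl(M|_{\fg_{\le 0}},L^0(\mu+b\delta)\bigr).$$
Now $\fg_{\le 0}$ is classical with odd part $\fg_{-1}=\fs\otimes\xi$ acting trivially on $L^0(\mu+b\delta)$, so Lemma \ref{LemRHom} identifies this Ext group with the cohomology of the complex
$$C^n\;=\;\Hom_{\fg_0}\bigl(S^n\fg_{-1}\otimes M|_{\fg_{\le 0}},\,L^0(\mu+b\delta)\bigr),\qquad (d\alpha)(X_1\cdots X_n\otimes v)\;=\;\sum_i \alpha(X_1\cdots\hat X_i\cdots X_n\otimes X_iv).$$

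Next I would exploit the $\delta$-weight grading. As $\fg_0$-modules $S^n\fg_{-1}\cong S^n\fs\otimes\mC_{-n\delta}$, and by Lemma \ref{LemSimp}
$$L(\lambda+a\delta)|_{\fg_0}\cong L^0(\lambda+a\delta)\oplus L^0(\lambda+(a-1)\delta)\ (\lambda\neq 0),\qquad L(a\delta)|_{\fg_0}\cong L^0(a\delta).$$
Matching $\delta$-weights forces $C^n$ to vanish outside $n\in\{a-b-1,\,a-b\}$ in the first case and outside $n=a-b$ in the second. By Remark \ref{RemSimp} the action of $X\otimes\xi\in\fg_{-1}$ on $L(\lambda+a\delta)$ kills the $L^0(\lambda+(a-1)\delta)$-summand and sends the $L^0(\lambda+a\delta)$-summand to $L^0(\lambda+(a-1)\delta)$ by $u\mapsto Xu$; consequently, for $\lambda\neq 0$ the only nonzero differential is
$$d\colon\Hom_{\fs}\bigl(S^{a-b-1}\fs\otimes L^0_\lambda,L^0_\mu\bigr)\,\longrightarrow\,\Hom_{\fs}\bigl(S^{a-b}\fs\otimes L^0_\lambda,L^0_\mu\bigr),\qquad \alpha\longmapsto\alpha\circ D^{a-b}[L^0_\lambda].$$

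Since $\fs$-modules are semisimple, $\Hom_{\fs}(-,L^0_\mu)$ is exact, whence
$$\dim\ker d=[\coker D^{a-b}[L^0_\lambda]:L^0_\mu],\qquad \dim\coker d=[\ker D^{a-b}[L^0_\lambda]:L^0_\mu],$$
yielding the claimed dimensions at $\Ext^{a-b-1}$ and $\Ext^{a-b}$, with all other $\Ext^i$ vanishing by the weight argument. In the case $\lambda=0$ the complex degenerates to a single term $C^{a-b}=\Hom_{\fs}(S^{a-b}\fs,L^0_\mu)$ with no differentials, producing the second formula. The one technical point requiring care is the derived Frobenius step, which reduces to the standard fact that the right adjoint of an exact functor preserves injectives; the rest is weight-space bookkeeping combined with one appeal to semisimplicity of $\fs$-modules.
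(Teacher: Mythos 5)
Your proof is correct and follows essentially the same route as the paper's: Shapiro's lemma (equivalently, your derived Frobenius reciprocity for the exact coinduction from $\fg_{\le 0}$) reduces to $\Ext$ over $\fg_{\le 0}$, Lemma~\ref{LemRHom} supplies the complex built from $S^\bullet\fg_{-1}\otimes M$, and semisimplicity of $\fg_0$-modules turns this into the $D^{a-b}[L^0_\lambda]$ computation. The only cosmetic difference is the order of operations — the paper first computes the homology $H_\bullet(C_\bullet(M))$ as a $\fg_0$-module and then reads off the $L^0(\mu+b\delta)$-multiplicity, whereas you apply $\Hom_{\fg_0}(-,L^0(\mu+b\delta))$ before taking cohomology; these agree by the same exactness of $\Hom_{\fg_0}$, and incidentally your answer $[S^{a-b}\fs:L^0_\mu]$ in the $\lambda=0$ case corrects a small typo ($L^0_\lambda$ should read $L^0_\mu$) in the stated proposition.
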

\begin{proof}
For an arbitrary $M\in \cF$, we have by Shapiro's lemma
$$\Ext_{\cF}^i(M,\nabla(\mu+b\delta))\;\cong\;\Ext^i_{\cF(\fg_{\le0})}(M, L^0(\mu+b\delta)).$$
Consider the complex $C_\bullet(M)$ of $\fg_0$-modules with boundary morphisms
$$S^n\fg_{-1}\otimes M\to S^{n-1}\fg_{-1}\otimes M,\quad  Y_1Y_2\cdots Y_{n}\otimes v\mapsto \sum_{i=1}^nY_1Y_2\stackrel{\hat{i}}{\cdots} Y_n\otimes Y_i v. $$
It then follows from Lemma~\ref{LemRHom} that
\begin{eqnarray*}
\Ext^i_{\cF(\fg_{\le0})}(M, L^0(\mu+b\delta))&\cong& H^i(\Hom_{\fg_0}(S^\bullet\fg_{-1}\otimes M, L^0(\mu+b\delta)))\\
&\cong& \Hom_{\fg_0}(H_i(C_\bullet(M)), L^0(\mu+b\delta)).
\end{eqnarray*}

Consider first $M=L(a\delta)$. Then the boundary morphisms in $C_\bullet(M)$ are zero, so we find
$$H_i(C_\bullet(M))\;\cong\; C_i(M)\;\cong\; S^i\fs\otimes L^0((a-i)\delta),$$
which implies
$$\dim\Ext_{\cF}^i(M,\nabla(\mu+b\delta))\;=\;[S^i\fs\otimes L^0((a-i)\delta):L^0(\mu+b\delta)]\;=\;\delta_{a-i,b}[S^i\fs:L^0_\mu].$$

Next we consider $M=L(\lambda+a\delta)$, for $\lambda\not=0$. Using Remark~\ref{RemSimp} we can describe the complex $C_\bullet(M)$ as follows. As an $\fs$-module, we have
$$C_n(M)\cong (S^n\fs\otimes L^0_\lambda)\oplus (S^n\fs\otimes \Pi L^0_\lambda),$$
and the boundary homomorphisms are
$$(Y_1 \cdots Y_n\otimes u,Z_1 \cdots Z_n\otimes \Pi w)\mapsto (0, \sum_{i=1}^n Y_1\stackrel{\hat{i}}{\cdots} Y_n\otimes Y_i \Pi u).$$
Plugging in the $\xi\partial_\xi$-action from Remark~\ref{RemSimp} thus yields
$$H_i(C_\bullet( M))\;\cong\; \left(\ker D^i[L^0_\lambda]\right) \otimes L^0((a-i)\delta)\,\oplus\,\left( \coker D^{i+1}[L^0_\lambda]\right)\otimes L^0((a-i-1)\delta),$$
which concludes the proof.
\end{proof}

\begin{proof}[Proof of Theorem~\ref{ThmExt}]
By Lemma~\ref{LemSimp} we have $L(\mu+b\delta)\cong\nabla(\mu+b\delta)$ when $\mu\not=0$, so Proposition~\ref{ExtNab} implies the first and the third equations in the theorem. The second equation follows from the third, together with Corollary~\ref{CorBGG}(2) and the fact that $\fs^\ast\cong\fs$.

By Lemma~\ref{LemRHom} we have
\begin{eqnarray*}\Ext^i_{\cF}(L(a\delta),L(b\delta))&\cong& \Hom_{\fg_{0}}(S^i\fg_{\ob}, L^0((b-a)\delta))\\
&\cong&\bigoplus_{j=0}^i \Hom_{\fg_{0}}(L^0((i-2j)\delta)\otimes S^j\fs, L^0((b-a)\delta)),
\end{eqnarray*}
which concludes the proof.
\end{proof}

\subsection{Koszulity}
We refer to \cite[\S 2.1]{MOS} for details on the notions of $\mZ$-graded categories, homogeneous functors, and categories with free $\mZ$-action. In short, a $\mZ$-graded category is one enriched in the monoidal category of $\mZ$-graded vector spaces.

\subsubsection{}We have the derived subalgebra
$$\fg':=[\fg,\fg]=\fg_{-1}\oplus\fs\oplus\fg_1,$$
which is again a classical Lie superalgebra of type I.
Because of this structure, for $i\in\mZ$, we can consider the algebra automorphism
$$\varphi_i:U(\fg)\to U(\fg),\quad \mbox{determined by } \begin{cases}
X\mapsto X,&\mbox{ for $X\in\fg'$}\\
\xi\partial_\xi\mapsto \xi\partial_\xi +i.
\end{cases}$$
This morphism induces a functor (an auto-equivalence) $\langle i\rangle:\cF\to\cF$ which twists each module by $\varphi_i$.

\subsubsection{}
Let $\cP_\circ$ denote a skeletal category equivalent to the category of indecomposable projective modules in $\cF$.
The functors $\langle i\rangle$ restrict to $\cP_\circ$ and acts as
$$P(\lambda+a\delta)\mapsto P(\lambda+a\delta)\langle i\rangle=P(\lambda+(a-i)\delta).$$ The collection of these functors yields a free $\mZ$-action on $\cP_\circ$.
We define the $\mC$-linear $\mZ$-graded category $\ba$ as the quotient $\cP_\circ/\mZ$ with respect to this free $\mZ$-action.

We describe this category explicitly. Let $\Lambda^+_0$ denote the set of integral dominant $\fs$-weights. In particular, we have a canonical bijection
$$\Lambda^+\,\to\,\Lambda^+_0\times\mZ,\;\; \lambda+a\delta\mapsto (\lambda,a).$$
Then we have $\Ob\ba=\Lambda^+_0$ and the graded morphism spaces are determined as follows. For $\lambda,\mu\in \Lambda^+_0$ and $i\in\mZ$, we set
$$\ba(\lambda,\mu)_i\;=\;\Hom_{\fg}(P(\lambda),P(\mu+i\delta))\;\cong\;\Hom_{\fg}(P(\lambda+a\delta), P(\mu+(i+a)\delta)).$$
The isomorphism is used in the definition of the composition of morphisms in $\ba$.

\subsubsection{}Let $\vecc_{\mZ}$ be the category of finite-dimensional $\mZ$-graded vector spaces.
It then follows from \cite[\S 2.1]{MOS}, that $\cF$ is equivalent to the category of homogeneous degree zero functors $\ba\to\vecc_{\mZ}$. In other words, $\cF$ is equivalent to the category $\ba\gmod$ of finite-dimensional graded modules of the graded algebra (without unit) $\bigoplus_{\lambda,\mu}\ba(\lambda,\mu)$.

\begin{remark}\label{RemKoszul}
By definition, $\cF\simeq \ba\gmod$ is the `graded lift' of the abelian category of all $\mC$-linear functors $\ba\to\vecc$, where we thus ignore the $\mZ$-grading on $\ba$. Usually, it is the latter functor category which has a Lie-theoretic origin, see \cite{BGS, BS1, BLW, CG, CL, GM, GS}, and the existence of the graded lift is part of `the theorem'. In contrast, in the present setup, our module category already acts as the graded lift itself.

Naturally, one can still wonder how to interpret the degraded category of $\cF$, that is the category of functors $\ba\to\vecc$. As it turns out, it is more elegant to answer a `super' version of the question. Rather than completely forgetting the $\mZ$-grading on $\ba$, we can restrict it to a $\mZ/2$-grading. One can then show that the category of homogeneous degree zero (for the $\mZ/2$-grading) functors $\ba\to\vecc_{\mZ_2}$ is equivalent to $\cF(\fg')$,
with $\fg'$ the derived algebra of $\fg$. The fact that we did not forget the grading completely results in a $\mZ/2$-action on the resulting module category, as a shadow of the $\mZ$-action on $\cF$. This is precisely the action of $\Pi$ on $\cF(\fg')$. \end{remark}

\subsubsection{}
It is a priori clear that $\ba(\lambda,\mu)=0$ unless $\lambda-\mu\in{ \mZ}\Phi(\fs)$.
Consequently, we can decompose the category $\ba$ as
$$\ba\;=\;\coprod_{\nu}\ba^{\nu},$$
where $\nu$ ranges over the set of minuscule $\fs$-weights (as algebras one can also write $\ba=\bigoplus_{\nu}\ba^\nu$). It follows easily from Theorem~\ref{thm:class:fd} and Proposition~\ref{prop:block:sl2} that each category $\ba^{\nu}$ is connected, so the above decomposition is exhaustive.

Furthermore, Theorem~\ref{thm:class:fd} and Proposition~\ref{prop:block:sl2} also imply that $\fa^{\nu}\gmod$ is indecomposable (as an additive category) in all cases except when $\fs=\mathfrak{sl}(2)$ and $\nu$ is the fundamental weight. In the latter case $\fa^\nu\gmod$ decomposes into two blocks, which are interchanged by the auto-equivalence $\langle 1\rangle$. In particular, the second and third block in Proposition~\ref{prop:block:sl2} are equivalent.

\begin{lemma} If $\nu\not=0$, then $\ba^{\nu}$ is positively graded in the sense of \cite[Definition~1]{MOS}, meaning for $\lambda,\mu\in\nu+\mZ\Phi(\fs)$ we have
\begin{enumerate}
\item $\ba^{\nu}(\lambda,\mu)_i=0$ for $i<0$;
\item $\ba^{\nu}(\lambda,\mu)_0=\mC \delta_{\lambda,\mu}$;
\item $\bigoplus_\kappa\ba^{\nu}(\lambda,\kappa)_i$ and $\bigoplus_\kappa\ba^{\nu}(\kappa,\mu)_i$ are finite-dimensional, for $i>0$.
\end{enumerate}
Furthermore, $\ba^0$ is not positively graded.
\end{lemma}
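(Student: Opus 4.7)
The plan is to reduce everything to the Cartan multiplicities computed in Proposition~\ref{PropCartan}. Two preliminary observations set up the reduction. First, when $\nu\neq 0$ is a non-trivial minuscule weight, every dominant integral weight $\lambda\in\nu+\mZ\Phi(\fs)$ in the block is itself nonzero as an $\fs$-weight: by Theorem~\ref{thm:class:fd} we have $\lambda=\nu+\sum_\alpha k_\alpha\alpha$ with $k_\alpha\in\mN$, and $\lambda=0$ would express the dominant weight $\nu$ as a non-positive integer combination of positive roots, forcing $\nu=0$. Second, by Corollary~\ref{CorBGG}(1), $P(\lambda)=\Delta(\lambda)$ has simple top $L(\lambda)$, so the standard projective-cover identity specialises to
$$\dim\ba^\nu(\lambda,\mu)_i\;=\;\dim\Hom_\fg(P(\lambda),P(\mu+i\delta))\;=\;[P(\mu+i\delta):L(\lambda+0\cdot\delta)].$$

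Given these preliminaries, axioms (1) and (2) are a direct read-off from the first case ($\mu\neq 0$) of Proposition~\ref{PropCartan}, under the dictionary $a\leftrightarrow i$ and $b\leftrightarrow 0$: the alternating sum vanishes whenever $i<0$, proving (1), while at $i=0$ the single surviving term is $[L^0_\mu:L^0_\lambda]=\delta_{\lambda,\mu}$, proving (2). For (3), the sum $\bigoplus_\kappa\ba^\nu(\kappa,\mu)_i$ has dimension at most the composition length of the finite-dimensional module $P(\mu+i\delta)$, hence is finite. For the other sum $\bigoplus_\kappa\ba^\nu(\lambda,\kappa)_i$, I would invoke the alternating-sum expression in Proposition~\ref{PropCartan} together with the observation that, for fixed $j$ and $\lambda$, the multiplicity $[\wedge^j\fs\otimes L^0_\kappa:L^0_\lambda]$ is nonzero for only finitely many $\kappa$ (as tensoring by the finite-dimensional module $\wedge^j\fs$ relates $L^0_\lambda$ to $L^0_\kappa$ only within a bounded shift of weights).

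For the last assertion, I exhibit a nonzero element of $\ba^0(0,0)_{-1}$: setting $\lambda=\mu=0$ and $i=-1$, the second half of Proposition~\ref{PropCartan} (with $a=-1$, $b=0$, $\mu=0$) falls into the regime $b=a+1$, yielding $[P(-\delta):L(0)]=\delta_{0,0}=1$. Hence $\ba^0(0,0)_{-1}\neq 0$, violating positivity axiom (1). No serious obstacle stands in the way; the only bookkeeping concern is keeping straight the role of the $\delta$-shift in the projective modules versus the internal grading of $\ba^\nu$, and flagging that it is exactly the second case of Proposition~\ref{PropCartan}---where a simple constituent can live one step above the head in the $\delta$-direction---that destroys positivity in the principal block.
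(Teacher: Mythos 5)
Your proposal is correct and takes the same route as the paper's own (one-line) proof, which simply observes that the statement follows easily from Proposition~\ref{PropCartan}; you have filled in precisely the bookkeeping the authors left implicit, including the key preliminary that in a non-principal block every $\fs$-weight is nonzero, so that only the first half of Proposition~\ref{PropCartan} is ever invoked. The only cosmetic imprecision is the phrase ``the alternating sum vanishes whenever $i<0$'': for $i<0$ one is in the third case $b>a$ of Proposition~\ref{PropCartan}, where the multiplicity is declared to be $0$ outright rather than computed as a vanishing alternating sum, but this does not affect the conclusion.
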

\begin{proof}
Proposition~\ref{PropCartan} implies easily that (3) is satisfied in every $\ba^{\nu}$, whereas (1) and (2) are satisfied if and only if $\nu\not=0$.
\end{proof}

Following \cite[\S 5.4]{MOS}, a positively graded category $\bc$ is Koszul if each simple object in $\bc\gmod$ admits a linear (in the grading) projective resolution. For a simple object $L$ in degree zero, this means that there exists a projective resolution $P_\bullet$ such that the top of $P_i$ lives in degree $i$. Clearly a linear projective resolution is a minimal projective resolution and it exists if and only if
$$\Ext^i_{\bc\gmod}(L,L'\langle j\rangle)=0\quad\mbox{whenever}\;i\not=j,$$
for every two simple modules $L,L'$ in degree zero.

\begin{prop}\label{propKoszul}
The graded category $\ba^{\nu}$ is Koszul when $\nu\not=0$.
\end{prop}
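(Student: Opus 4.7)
The plan is to verify the linearity criterion for Koszulity recalled just before the proposition: for simple objects $L, L'$ of degree zero in $\ba^{\nu}\gmod$ one must show $\Ext^i(L,L'\langle j\rangle)=0$ whenever $i\ne j$. The first step is to translate this condition through the equivalence $\cF\simeq\ba\gmod$ together with the shift rule $P(\lambda+a\delta)\langle i\rangle=P(\lambda+(a-i)\delta)$. Under this translation, the degree-zero simple labelled by $\lambda$ corresponds to $L(\lambda+0\cdot\delta)$ and applying $\langle j\rangle$ replaces it by $L(\lambda-j\delta)$, so the criterion becomes
$$\Ext^i_{\cF}(L(\lambda+a\delta), L(\mu+b\delta))=0 \quad\text{whenever } i\ne a-b,$$
for all dominant $\lambda,\mu\in\nu+\mZ\Phi(\fs)$ and all $a,b\in\mZ$.

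The second step is to invoke Theorem~\ref{ThmExt}. Because $\nu$ is a nonzero minuscule weight, it represents a nontrivial class in $P/Q$ and so the coset $\nu+\mZ\Phi(\fs)$ does not contain $0$; in particular every label in the block $\cF_\nu$ is nonzero and only the first case of Theorem~\ref{ThmExt} is relevant. That formula decomposes the Ext dimension as a ``linear'' piece $\delta_{a-b,i}[\ker D^i[L^0_\lambda]:L^0_\mu]$, supported on the diagonal $i=a-b$, plus a ``non-linear'' piece $\delta_{a-b,i+1}[\coker D^{i+1}[L^0_\lambda]:L^0_\mu]$, supported precisely at $i=a-b-1$. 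Only the second piece can obstruct Koszulity.

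The crux of the argument, and the place where the hypothesis $\nu\ne 0$ is genuinely used, is to kill this non-linear piece. All weights of $L^0_\lambda$ lie in the coset $\lambda+\mZ\Phi(\fs)=\nu+\mZ\Phi(\fs)$, which by the previous paragraph does not contain $0$; hence $L^0_\lambda$ has no zero weight space. By Lemma~\ref{LemSurj}(2) this forces $D^n[L^0_\lambda]$ to be surjective for every $n\ge 1$, so $\coker D^{i+1}[L^0_\lambda]=0$ and the non-linear term vanishes identically. The Ext dimension then collapses to $\delta_{a-b,i}[\ker D^i[L^0_\lambda]:L^0_\mu]$, which vanishes unless $i=a-b$, completing the verification of the Koszul criterion. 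I do not foresee any serious obstacle: the argument is essentially bookkeeping around Theorem~\ref{ThmExt}, and Lemma~\ref{LemSurj}(2) neatly pinpoints why the non-principal hypothesis is required (for $\nu=0$ one has $L^0_0=\mC$ with an obvious zero weight, and the remark after Lemma~\ref{LemSurj} already records that $D^2$ then has a nontrivial cokernel).
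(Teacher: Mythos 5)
Your proof is correct and follows the same route as the paper: the paper cites its Example~\ref{ExampKoszul}, which is precisely the application of Lemma~\ref{LemSurj}(2) you carry out to kill the cokernel term in Theorem~\ref{ThmExt}, together with the observation that $0\notin\nu+\mZ\Phi(\fs)$ when $\nu$ is a nonzero minuscule weight. The only difference is that you unpack the reduction to the linearity criterion explicitly, whereas the paper leaves it implicit.
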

\begin{proof}
This follows immediately from Example~\ref{ExampKoszul}.
\end{proof}

\subsection{Further comments on Koszulity}

\subsubsection{} One way to formulate Proposition~\ref{propKoszul} is as follows. Every block in the category $\cF(\fg')$ admits a $\mZ$-graded lift (in the informal `super' sense of Remark~\ref{RemKoszul}). Whenever the block is not principal (does not contain a trivial $\fg'$-module), this graded lift is Koszul (more precisely the category of graded modules of a Koszul category). This leaves open the question of whether the principal block in $\cF(\fg')$ admits some different graded lift which might be Koszul.

This seems unlikely and one can easily prove for $\fs=\mathfrak{sl}(2)$ that this is not possible. Let $\omega$ be the fundamental weight and consider the standard module $\Delta(2\omega)$. When we ignore parity, it has socle filtration
$$L(2\omega),\; L(4\omega)\oplus \mC,\; \mC,\; L(2\omega),$$
and radical filtration
$$L(2\omega),\; \mC\;,L(4\omega)\oplus \mC,\; L(2\omega).$$
However, it is well known, see \cite[Proposition~2.4.1]{BGS}, that modules with simple socle and simple top must be rigid (the socle and radical filtrations must coincide) over any Koszul category.

\subsubsection{Connection with Conformal Modules} The first extension groups in $\cF$ described in Corollary \ref{ExExt1} have remarkable similarities to the first extension groups between so-called finite conformal modules over the current conformal algebra computed in \cite{CKW}. Below, we shall not give the precise definition of conformal modules but instead define the corresponding equivalent category of modules over the so-called extended annihilation algebra, which will denoted by $\mathcal G$ below. The interested reader is referred to \cite{CK} for further detail.

Let $\fs$ be a finite-dimensional simple Lie algebra as before. Let $\mC[t]$ be the polynomial algebra so that we can form the Lie algebra $\fs\otimes\mC[t]$. Set $\partial_t:=\frac{\partial}{\partial t}$ and let $\mathcal D=\mC\partial_t+\mC t\partial_t$. We consider the (infinite-dimensional) semisimple extension
\begin{align*}
\mathcal G:=\fs\otimes\mC[t]\rtimes\mathcal D.
\end{align*}
Our Lie superalgebra \eqref{Defg} can be regarded as the `super analogue' of $\mathcal{G}$.
The Lie algebra $\mathcal G$ is $\mZ$-graded with grading operator $t\partial_t$, i.e., $\mathcal G=\bigoplus_{j=-1}^\infty\mathcal G_j$ with $\mathcal G_{-1}=\mC \partial_t$, $\mathcal G_0=\fs \oplus\mC t\partial_t$, and $\mathcal G_j=\fs\otimes t^j$, for $j>0$. Set $\mathcal G_{\ge 0}:=\bigoplus_{j\ge 0}\mathcal G_j$.

 Let $\mathcal C$ be the category of $\mathcal G$-modules $M$ such that $t\partial_t$ acts semisimply with integer eigenvalues and $M$ is finitely generated over $\mC[\partial _t]$. It follows readily that for each vector $v\in M$ there exists a positive integer $N$ (depending on $v$) with $\mathcal G_j v=0$, for all $j\ge N$.

 Let $\la\in(\fh^\fs)^*$ be a dominant integral $\fs$-weight and define $\delta\in\fh^\ast$ by $\delta(-t\partial_t)=1$ extended trivially to $\fh$.
We have the finite-dimensional irreducible $\mathcal G_0$-module $L^0(\la+a\delta)$ of highest weight $\la+a\delta$, $a\in\mZ$. We can extend the action of $\mathcal G_0$ on $L^0(\la+a\delta)$ to an action of $\mathcal G_{\ge 0}$ in a trivial way. Then we can construct the induced module
\begin{align*}
\text{ind}_{{\mathcal G}_{\ge 0}}^{\mathcal G} L^0(\la+a\delta).
\end{align*}
If $\la\not=0$, it turns out that $\text{ind}_{{\mathcal G}_{\ge 0}}^{\mathcal G} L^0(\la+a\delta)$ is irreducible. On the other hand, if $\la=0$, then $L^0(a\delta)$ extends trivially to an irreducible $\mathcal G$-module.

We summarise the above discussion in the following.

\begin{prop}\cite[Corollary 3.1]{CK}
The simple objects in $\mathcal C$ are parameterized by pairs $(\la,a)$, where $\la$ is a dominant integral weight of $\fs$ and $a\in\mZ$. Denoting the corresponding simple  modules by $\tL(\la+a\delta)$ we have
\begin{itemize}
  \item[(i)] $\tL (\la+a\delta)=\text{ind}_{{\mathcal G}_{\ge 0}}^{\mathcal G} L^0(\la+a\delta)$, if $\la\not=0$.
  \item[(ii)] $\tL (a\delta)=L^0(a\delta)$ extended trivially, if $\la=0$.
\end{itemize}
\end{prop}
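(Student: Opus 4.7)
The plan is to mimic the classical highest-weight classification, relative to the $\mZ$-grading $\mathcal{G} = \bigoplus_{j \ge -1} \mathcal{G}_j$ with $\mathcal{G}_{-1} = \mC\partial_t$, $\mathcal{G}_0 = \fs \oplus \mC t\partial_t$, $\mathcal{G}_j = \fs \otimes t^j$ for $j \ge 1$. Given $M \in \mathcal{C}$, I will first identify a canonical top $\mathcal{G}_0$-module, then realise $M$ as a quotient of an induced module, and finally determine when that induced module is itself simple.

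For the first step, decompose $M = \bigoplus_{k\in\mZ} M_k$ into $t\partial_t$-eigenspaces. Since $M$ is finitely generated over $\mC[\partial_t]$ and $\partial_t$ carries $t\partial_t$-weight $-1$, choosing weight vector generators shows that the eigenvalues of $t\partial_t$ are bounded above and each eigenspace is finite-dimensional. Let $k_0$ denote the maximum; then $\mathcal{G}_j M_{k_0} \subseteq M_{k_0+j} = 0$ for $j > 0$, so $M_{k_0}$ is a $\mathcal{G}_{\ge 0}$-module on which $\mathcal{G}_{>0}$ acts trivially. A PBW decomposition $U(\mathcal{G}) \cong U(\mathcal{G}_{-1}) \otimes U(\mathcal{G}_0) \otimes U(\mathcal{G}_{>0})$ forces, for $M$ simple, the space $M_{k_0}$ to be simple as a $\mathcal{G}_0$-module, hence isomorphic to $L^0(\la + a\delta)$ for unique $\la \in \Lambda^+_0$ and $a \in \mZ$. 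The universal property of induction then yields an epimorphism
$$\mathrm{ind}_{\mathcal{G}_{\ge 0}}^{\mathcal{G}} L^0(\la + a\delta) \twoheadrightarrow M.$$

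The heart of the argument is to show that this induced module is already simple when $\la \ne 0$. As a graded vector space the induced module equals $\bigoplus_{m\ge 0} M_m$ with $M_m = \partial_t^m \otimes L^0(\la + a\delta)$, and any nonzero submodule $N$ inherits this grading. Let $m_0$ be minimal with $N \cap M_{m_0} \ne 0$; since $\fs$ preserves the grading and acts on each component as the simple $\fs$-module $L^0_\la$, we obtain $N \cap M_{m_0} = M_{m_0}$. Iterated use of $[\partial_t, x \otimes t^j] = j(x \otimes t^{j-1})$, together with $(x \otimes t^j)v_0 = 0$ for $j > 0$ and the fact that no components of negative $\delta$-degree exist in the induced module, gives by a short induction
$$(x \otimes t^{m_0})(\partial_t^{m_0} \otimes v_0) \;=\; (-1)^{m_0} m_0!\, x v_0$$
for every $v_0 \in L^0(\la + a\delta)$. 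When $\la \ne 0$ the $\fs$-action on $L^0_\la$ is nontrivial, so this produces a nonzero element of $N \cap M_0$, forcing $m_0 = 0$ and $N = M$.

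For $\la = 0$, the space $L^0(a\delta)$ is one-dimensional with trivial $\fs$-action, and the identity above forces $\fs \otimes \mC[t]$ to act trivially on the entire induced module. A simple $M \in \mathcal{C}$ with top weight $a\delta$ must then have $\partial_t$ acting as zero as well, since otherwise $\mC[\partial_t]\partial_t v_0$ would be a proper nonzero submodule of $M = \mC[\partial_t] v_0$; hence $M = \mC v_0$ with $\mathcal{G}$ acting by trivial extension of its $\mathcal{G}_0$-action, which is precisely $\tL(a\delta)$. The main technical obstacle is the explicit commutator calculation producing the nonzero scalar $(-1)^{m_0} m_0!$; once that identity is in place, the rest of the classification is the standard highest-weight bookkeeping.
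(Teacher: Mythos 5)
The paper does not actually prove this proposition: it is cited from \cite[Corollary~3.1]{CK}, and the paragraph preceding it merely asserts without argument that the induced module ``turns out to be'' irreducible when $\la\ne0$. Your argument is a correct, self-contained highest-weight classification that fills in what the paper leaves to the reference, so there is nothing in the paper to compare it against route-for-route.

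Two small points worth tightening, neither a genuine gap. First, for $\la=0$ you write that ``the identity above forces $\fs\otimes\mC[t]$ to act trivially on the entire induced module,'' but the displayed identity is only the diagonal case $j=m$; the off-diagonal vanishing $(x\otimes t^j)(\partial_t^m v_0)=0$ for $0\le j<m$ needs the same single induction on $m$ (base: $(x\otimes t^j)v_0=0$ for all $j\ge0$ when $\la=0$; inductive step: $(x\otimes t^j)(\partial_t^m v_0)=\partial_t(x\otimes t^j)(\partial_t^{m-1}v_0)-j(x\otimes t^{j-1})(\partial_t^{m-1}v_0)$), which you should state rather than attribute to the diagonal identity alone. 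Second, the symbol $M$ is overloaded: it is introduced as the abstract simple object, but in ``forcing $m_0=0$ and $N=M$'' it refers to the induced module; and you never explicitly note that distinct pairs $(\la,a)$ give nonisomorphic simples because the top $t\partial_t$-eigenspace recovers $L^0(\la+a\delta)$ — a one-line remark that completes the parameterisation claim.
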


The first extension groups between the simple objects in $\mathcal C$ have been classified in \cite[Section 4]{CKW}.

\begin{theorem}\label{ThmConf} Let $\theta$ denote the highest root of $\fs$. If $\lambda,\mu\not=0$, we have
\begin{align*}
&\dim\Ext^1_{\cC}(\tL (\lambda+a\delta), \tL (\mu+b\delta))\;=\; (\delta_{a-b,1}+\delta_{a-b,2}\delta_{\fs,\mathfrak{sl}(2)})(\left[\fs\otimes L^0_\lambda:L^0_\mu\right]-\delta_{\lambda,\mu}),\\
&\dim\Ext^1_{\cC}(\tL (\lambda+a\delta), \tL (b\delta))\;=\;\delta_{a-b,1}\delta_{\lambda,\theta},\\
&\dim\Ext^1_{\cC}(\tL (a\delta), \tL (\la+b\delta))\;=\; 0,\\
&\dim\Ext^1_{\cC}(\tL (a\delta), \tL (b\delta))\;=\; \delta_{b-a,1}.
\end{align*}
\end{theorem}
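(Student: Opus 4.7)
The plan is to obtain Theorem~\ref{ThmConf} as a translation of the first-cohomology computations for finite conformal modules carried out in \cite[Section~4]{CKW}. The first step is to promote the bijection on simple objects from \cite[Corollary~3.1]{CK} to an equivalence of abelian categories between $\cC$ and the category of finite conformal modules over the current conformal algebra attached to $\fs$. Under this equivalence, $\tL(\la+a\delta)$ with $\la\neq 0$ corresponds to the free conformal module built on $L^0_\la$ with internal grading shift $a$, and $\tL(a\delta)$ corresponds to the trivial one-dimensional conformal module. The four cases of the statement then match the four (free/trivial)$\times$(free/trivial) pairings of simples, whose first cohomology is the content of loc.\ cit.

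For a direct cohomological proof, note that when $\la\neq 0$ the identification $\tL(\la+a\delta)=\text{ind}_{\mathcal{G}_{\ge 0}}^{\mathcal{G}} L^0(\la+a\delta)$ together with Shapiro's lemma gives
$$\Ext^1_\cC(\tL(\la+a\delta),N)\;\cong\;\Ext^1_{\mathcal{G}_{\ge 0}}(L^0(\la+a\delta),N|_{\mathcal{G}_{\ge 0}}),$$
and a Hochschild--Serre argument with respect to the ideal $\mathcal{G}_{>0}=\fs\otimes t\mC[t]$, which acts trivially on $L^0(\la+a\delta)$, identifies this with the $\mathcal{G}_0$-invariants in $H^1(\mathcal{G}_{>0};\Hom_\mC(L^0_\la,N))$. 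The case $\la=0$ is treated separately: $\tL(a\delta)$ is one-dimensional with both $\fs\otimes t\mC[t]$ and $\partial_t$ acting trivially, so $\Ext^1_\cC(\tL(a\delta),-)$ reduces to a straightforward computation that yields the last two lines of the theorem, including the vanishing $\Ext^1_\cC(\tL(a\delta),\tL(\la+b\delta))=0$ for $\la\neq 0$: a weight-space analysis of any hypothetical extension shows that $\partial_t$ acts freely on $\tL(\la+b\delta)$ while killing the generator of $\tL(a\delta)$, forcing the extension to split.

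The technical heart of the direct proof is the analysis of 1-cocycles $\phi:\mathcal{G}_{>0}\to\Hom_\mC(L^0_\la,N)$. The $t\partial_t$-grading forces $\phi|_{\fs\otimes t^j}$ to shift the $\delta$-weight by $-j$, accounting for the $\delta_{a-b,j}$ factors. The $\fs$-equivariance and the cocycle condition on $[\fs\otimes t^i,\fs\otimes t^j]\subseteq\fs\otimes t^{i+j}$ then propagate $\phi$ from $\fs\otimes t$ to all higher degrees, so in general only the $j=1$ component is independent; this produces the $\delta_{a-b,1}\left([\fs\otimes L^0_\la:L^0_\mu]-\delta_{\la,\mu}\right)$ contribution, the correction $-\delta_{\la,\mu}$ subtracting the one-dimensional space of scalar coboundaries $L^0_\la\to L^0_\la\subseteq N$ available when $\mu=\la$. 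The single exception is $\fs=\mathfrak{sl}(2)$, where $S^2\fs$ contains a copy of $\fs$ and admits an independent cocycle supported on $\fs\otimes t^2$; this is exactly the source of the extra term $\delta_{a-b,2}\delta_{\fs,\mathfrak{sl}(2)}$.

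The main obstacle is the careful management of the infinite-dimensional cohomology of $\mathcal{G}_{>0}$ together with the finiteness condition over $\mC[\partial_t]$ built into $\cC$: one has to verify that precisely the cocycles described above correspond to genuine extensions in $\cC$, and not in some larger category of all $\mathcal{G}$-modules. This is exactly the content of the calculation in \cite{CKW}, so the cleanest route to the theorem is to invoke their result via the equivalence set up in the first step.
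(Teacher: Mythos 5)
Your proposal correctly identifies that Theorem~\ref{ThmConf} is not proved in the paper itself: the paper's entire treatment is the sentence ``The first extension groups between the simple objects in $\mathcal C$ have been classified in [CKW, Section~4]'' followed by the statement. Your final recommendation, to invoke [CKW] via the identification of $\cC$ with the category of finite conformal modules over the current conformal algebra, is therefore exactly what the paper does, and to that extent the approach matches.

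The direct cohomological sketch you interleave does, however, contain a concrete error in the one place where it tries to explain something nontrivial. You attribute the exceptional $\delta_{a-b,2}\delta_{\fs,\mathfrak{sl}(2)}$ term to the claim that for $\fs=\mathfrak{sl}(2)$ the module $S^2\fs$ contains a copy of $\fs$. This is false: $\fs\otimes\fs\cong L^0_{4\omega}\oplus L^0_{2\omega}\oplus\mC$ with the copy of $\fs\cong L^0_{2\omega}$ sitting in $\wedge^2\fs$ (via the bracket), so $S^2\fs\cong L^0_{4\omega}\oplus\mC$ has no adjoint summand. The genuine $\mathfrak{sl}(2)$-anomalies recorded in the paper, in the Remark following Lemma~\ref{LemSurj}, are that $\coker D^2[\fs]$ contains the \emph{trivial} representation and $\coker D^3[\fs]$ contains the adjoint; these are of a different nature than what you describe and in any case pertain to the super side $\cF$, not to $\cC$. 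Similarly, your vanishing argument for $\Ext^1_\cC(\tL(a\delta),\tL(\la+b\delta))$ is not complete as stated: when $\la\neq 0$ lies in the root lattice, $L^0_\la$ has a nonzero zero-weight space, so the weight of a hypothetical vector $\partial_t\tilde v$ can be accommodated inside $\tL(\la+b\delta)$, and one needs the full cocycle analysis from [CKW] (or a compatibility argument involving the $\fs\otimes t^k$-action) to conclude. Since the paper only cites [CKW] for this statement, these gaps do not indicate any divergence from the paper's argument, but the heuristic sketch as written would not stand on its own.
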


Henceforth we exclude $\fs=\mathfrak{sl}(2)$. Then we can draw the following conclusions from Theorems~\ref{ThmConf} and Corollary~\ref{ExExt1}.
\begin{itemize}
\item There is a canonical correspondence between the blocks in the category $\cC$ and the blocks in $\cF$. Indeed, while the generators of the equivalence relations coming from the $\Ext^1$-quivers are not exactly the same, it is easy to see they generate the same equivalence relations in the sense that $\tL(\lambda+a\delta)$ will be in the same block of $\cC$ as $\tL(\mu+b\delta)$ if and only if $L(\lambda+a\delta)$ is in the same block of $\cF$ as $L(\mu+b\delta)$.
\item Take a block in $\cF$ which is Koszul (so any block but the principal one). The canonical identification of labels yields an isomorphism of the $\Ext^1$-quiver of simple objects with the one for the corresponding block in $\cC$.
\end{itemize}

The above observations might raise hopes that one could realise the Koszul dual of blocks in $\cF$ as appropriate module categories of $\mathcal{G}$. However, it is not clear that this might be possible. The most naive guesses fail to be the Koszul dual, since $\mathcal{G}$ seems to be `too big', leading to graded morphism spaces between projective objects which are bigger than the corresponding extension spaces in Example~\ref{ExampKoszul}. We conclude by looking at a similar situation.
\begin{remark}
For a simple finite-dimensional $\fs$-representation $V$, we can construct the generalised Takiff algebra $\fs\ltimes V$ and superalgebra $\fs\ltimes \Pi V$ similarly as described in the Introduction.
In \cite[\S 5.1]{GM}, see also~\cite{CG}, it is observed that a category of $\fs\ltimes V$-modules admits a graded lift which is Koszul and moreover it is Koszul dual to a graded lift of a category of $\fs\ltimes \Pi V$-modules. We can reformulate this for the special case where $V$ is the adjoint representation as follows.
For the Lie superalgebra $\fk:=(\fs\otimes\wedge(\xi))\rtimes \mC\xi\partial_\xi$, the category $\cF(\fk)$ is Koszul and the Koszul dual is given by a category of modules over $(\fs\otimes\mC[t]/(t^2))\rtimes \mC t\partial_t$. It is easy to see that the truncation $t^2=0$ is essential for the Koszul duality. In our setting we have $\partial_t\in\mathcal{G}$, so such a truncation to make $\mathcal{G}$ smaller is not possible.
\end{remark}


\subsection*{Acknowledgement}
The first author is supported by a MoST grant of the R.O.C. The second author is supported by the ARC grant DE170100623. The authors thank Arun Shrinath Kannan and Honglin Zhu for pointing out that the rank 2 case of the semisimple extension of the Takiff superalgebra yields a periplectic superalgebra.

\end{document}